\theoremstyle{plain}
\newtheorem{lemma}{Lemma}[section]
\newtheorem{theorem}[lemma]{Theorem}
\newtheorem{proposition}[lemma]{Proposition}
\newtheorem{corollary}[lemma]{Corollary}
\theoremstyle{definition}
\newtheorem{definition}[lemma]{Definition}
\newtheorem{remark}[lemma]{Remark}
\numberwithin{equation}{section}
\newcommand{\R}{\mathbb{R}}
\newcommand{\N}{\mathbb{N}}
\newcommand{\supp}{\text{\rm supp}}
\newcommand{\gr}{\textrm{graph}}
\newcommand{\ve}{\varepsilon}
\newcommand{\erre}{\mathbb{R}}
\newcommand{\enne}{\mathbb{N}}
\newcommand{\RCD}{\mathsf{RCD}^{*}}
\newcommand{\f}{\varphi}
\newcommand{\G}{\mathcal{G}}
\newcommand{\Opt}{\mathrm{OptGeo}}
\newcommand{\Geo}{\mathrm{Geo}}
\renewcommand{\r}{\varrho}
\begin{document}
\title[Monge problem in $\RCD$-spaces]{Monge problem in metric measure spaces \\ with Riemannian curvature-dimension condition}

\author{Fabio Cavalletti}

\address{RWTH, Department of Mathematics, Templergraben  64, D-52062 Aachen (Germany)}
\email{cavalletti@instmath.rwth-aachen.de}

\begin{abstract}
We prove the existence of solutions for the Monge minimization problem, addressed in a metric measure space $(X,d,m)$
enjoying the Riemannian curvature-dimension condition $\RCD(K,N)$, with $N < \infty$. 
For the first marginal measure, we assume that $\mu_{0} \ll m$. As a corollary, 
we obtain that the Monge problem and its relaxed version, the Monge-Kantorovich problem, attain the same minimal value.

Moreover we prove a structure theorem for $d$-cyclically monotone sets: neglecting a 
set of zero $m$-measure they do not contain any branching structures, that is, they can be written as the disjoint union 
of the image of a disjoint family  of geodesics.
\end{abstract}

\maketitle

\tableofcontents

\bibliographystyle{plain}

\section{Introduction}

Let $(X,d,m)$ be a metric measure space verifying the Riemannian curvature dimension condition $\mathsf{RCD}^{*}(K,N)$
for $K, N \in \R$ with $N \geq 1$.
%%%%%%%%
In this note we prove the existence of a solution for the following Monge problem: 
given $\mu_{0}, \mu_{1} \in \mathcal{P}(X)$ solve the following minimization problem
\begin{equation}\label{E:Monge1}
\inf_{T_{\sharp}\mu_{0} = \mu_{1}}  \int_{X} d(x,T(x)) \mu_{0}(dx),
\end{equation}
provided $\mu_{0} \ll m$.
More in detail, the minimization of the functional runs over the set of $\mu_{0}$-measurable maps $T: X \to X$ such that 
$T_{\sharp}\mu_{0} = \mu_{1}$, that is 
\[
\mu_{0} ( T^{-1}(A) ) = \mu_{1}(A), \qquad \forall A \in\mathcal{B}(X),
\]
where $\mathcal{B}(X)$ denotes the $\sigma$-algebra of all Borel subsets of $X$. 

On the way to the proof of the existence of an optimal map, we will also 
prove a structure theorem for branching structures inside $d$-cyclically monotone sets.
%%%%%%%%%%%%%%%%%%%%%%%%%%%
Before giving the statements of the two main results of this note and an account on the strategies to prove them, 
we recall some of the (extensive) literature on the Monge minimization problem.

The first formulation for \eqref{E:Monge1} (Monge in 1781) was addressed in $\erre^{n}$ 
with the cost given by the Euclidean norm and the measures $\mu_{0}, \mu_{1}\ll \mathcal{L}^{n}$ were supposed to be supported on two disjoint compact sets. The original problem remained unsolved for a long time. 
In 1978 Sudakov in \cite{sudak} proposed a solution for any distance cost induced 
by a norm, but an argument about disintegration of measures contained in his proof was not correct, see \cite{larm} for details.
Then the Euclidean case was correctly solved by Evans and Gangbo in \cite{evagangbo}, 
under the assumptions that $\textrm{spt}\,\mu_{0} \cap  \textrm{spt}\,\mu_{1} = \emptyset$,  $\mu_{0},\mu_{1} \ll \mathcal{L}^{n}$ and their densities are Lipschitz functions with compact support.
After that, many results reduced the assumptions on the supports of $\mu_{0},\mu_{1}$, see \cite{caffafeldmc} and \cite{trudiwang}. 
The result on manifolds with geodesic cost is obtained in \cite{feldcann:mani}.
The case of a general norm as cost function on $\erre^{n}$ has been solved first in the particular case of crystalline norms in \cite{ambprat:crist}, 
and then in fully generality independently by L. Caravenna in \cite{caravenna:Monge} and by T. Champion and L. De Pascale in 
\cite{champdepasc:Monge}. 

The study of the geodesic metric space framework started with \cite{biacava:streconv}, where the metric space was assumed to be also 
non-branching. There the existence of solutions to \eqref{E:Monge1} 
was obtained for metric spaces verifying the measure-contraction property $\mathsf{MCP}(K,N)$ (for instance the Heisenberg group). 
An application of the results of \cite{biacava:streconv} to the Wiener space can be found in \cite{cava:wiener}.
Then in \cite{cava:nonconv} the problem was studied removing the 
non-branching assumption but obtaining existence of solutions only in a particular case. 

Non-branching metric measure spaces 
enjoying $\mathsf{CD}^{*}(K,N)$ also verify $\mathsf{MCP}(K,N)$. Then from \cite{biacava:streconv} the Monge problem is solved also in that case.
So with respect to the most general known case, we impose a stronger curvature information (namely $\mathsf{RCD}^{*}(K,N)$) 
and we remove the non-branching assumption.

%%%%%%%%%%%%%%%%%%%%%%%%%%%%%%%%%%%%
\subsection{The results}
The nowadays classical strategy to show existence of optimal maps is to relax the integral functional to the larger 
class of transport plans
\[
\Pi(\mu_{0},\mu_{1}): = \{ \pi \in \mathcal{P}(X\times X): (P_{1})_{\sharp}\pi =\mu_{0},  (P_{2})_{\sharp}\pi =\mu_{1}\},
\]
over where the functional we want to minimize has now the following expression
\[
\int d(x,y) \eta(dxdy).
\]
For $i=1,2$, $P_{i}: X \times X \to X$ denotes the projection map on the $i$-th component.
Then assuming that the functional is finite at least on one element of $\Pi(\mu_{0},\mu_{1})$, by linearity in $\eta$ and tightness of $\Pi(\mu_{0},\mu_{1})$, it follows the existence of $\eta_{opt} \in \Pi(\mu_{0},\mu_{1})$
so that 
\[
\int d(x,y) \eta_{opt}(dxdy) = \inf_{\eta \in \Pi(\mu_{0},\mu_{1})}  \int_{X} d(x,y) \eta(dx).
\]
Then the central question, whose positive answer would prove existence of a solution to Monge problem, is whether 
or not $\eta_{opt}$ is supported on the graph of a $m$-measurable map $T: X \to X$. 

The only property characterizing $\eta_{opt}$ inside $\Pi(\mu_{0},\mu_{1})$ is to be concentrated on a $d$-cyclically monotone set.
Hence to build an optimal map we have to start from that.
But while  the Riemannian curvature-dimension condition $\RCD(K,N)$ 
gives crucial information on $d^{2}$-cyclically monotone sets (neglecting a set of measure zero, they are the graph of a measurable map, 
see Section \ref{S:rcd} and references therein), 
nothing is known under this curvature assumption on the structure of $d$-cyclically monotone sets.
In particular what we would like to exclude is the presence of branching structures. 
Note that the first result proving absence of branching geodesics assuming 
a curvature condition, in that case strong $\mathsf{CD}(K,\infty)$, is contained in \cite{rajasturm:branch}.

%in \cite{giglirajasturm:optimalmaps} it is proved that $\mathsf{RCD}^{*}(K,N)$ implies 
%the measure-contraction property 
%and, as already pointed out, in \cite{biacava:streconv} the existence of an optimal map for the Monge problem has been shown in the framework of 
%non-branching metric measure spaces $(X,d,m)$ enjoying the $\mathsf{MCP}(K,N)$. 

The strategy we will follow is: prove that $d$-cyclically monotone sets do not have branching structures $m$-almost everywhere;
then use the approach with Disintegration Theorem (see for instance \cite{biacava:streconv} and references therein) to reduce the Monge problem to a family of 1-dimensional Monge problem. 
There one can apply the 1-dimensional theory. 
Thanks to the curvature assumption we can prove a suitable property for the first marginal measures 
and obtain the existence of the 1-dimensional optimal maps, one for each 1-dimensional Monge problem.  
Then gluing together all the one-dimensional optimal maps, one gets an optimal map $T:X \to X$ solution of the Monge problem \eqref{E:Monge1}.
A more precise program on the use of Disintegration Theorem in the Monge problem will be given in Section \ref{S:dmonotone}.

We conclude this introductory part stating the two main results we will prove.
The first is about the structure of the $d$-cyclically monotone set associated to a Kantorovich potential $\f^{d}$ 
for the problem \eqref{E:Monge1}.

\begin{theorem}\label{T:1}
Let $(X,d,m)$ be a metric measure space verifying $\RCD(K,N)$ for some $K,N \in \R$, with $N\geq 1$.
Let moreover $\Gamma$ be a $d$-cyclically monotone set as \eqref{E:Gamma} and let $\mathcal{T}_{e}$ be the set of all points moved by $\Gamma$ as in Definition \ref{D:transport}.
Then there exists $\mathcal{T} \subset \mathcal{T}_{e}$ that we call the transport set such that 
\[
m(\mathcal{T}_{e} \setminus \mathcal{T}) = 0, \quad 
\]
and for all $x \in \mathcal{T}$, the transport ray $R(x)$ is formed by a single geodesic and for $x\neq y$, both in $\mathcal{T}$, either $R(x) = R(y)$
or $R(x) \cap R(y)$ is contained in the set of initial points $a \cup b$ as Definition \ref{D:transport}.
\end{theorem}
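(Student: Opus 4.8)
The plan is to reduce the absence of branching for the distance cost $d$ to the known regularity theory for the quadratic cost $d^2$ under $\RCD(K,N)$, by a localization/disintegration argument. First I would introduce, following the standard approach (as in \cite{biacava:streconv}), the relevant objects attached to the Kantorovich potential $\f^d$: for $x \in \mathcal{T}_e$ the transport ray $R(x)$ through $x$, the set of forward/backward branching points, and the chains of oriented transport rays. The core structural fact to establish is that the set of branching points—points $x$ where two distinct transport rays through $x$ separate in forward (or backward) time—has $m$-measure zero. This is exactly the step where $\RCD(K,N)$ must enter, and it is the main obstacle: under a mere $\mathsf{CD}^*(K,N)$ or $\mathsf{MCP}(K,N)$ assumption one does not know how to rule this out without non-branching of the ambient space, whereas $\RCD(K,N)$ should suffice because the Riemannian (infinitesimally Hilbertian) condition forces essentially non-branching behaviour at the level of $W_2$-geodesics.

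The key idea I would use is a bootstrap from $d$-monotonicity to $d^2$-monotonicity along rays. Suppose, for contradiction, that a positive $m$-measure set of forward branching points exists. Through each such point pass two transport rays; by taking points on these rays one produces, via the cyclical monotonicity of $\Gamma$, configurations that one can feed into an optimal transport problem for the \emph{quadratic} cost between suitable restrictions of $\mu_0$ and $\mu_1$ (or between $m$ restricted to appropriate slabs and its push-forward along the ray map). Because $\RCD(K,N)$ implies that $d^2$-optimal plans between absolutely continuous measures are induced by maps and, more to the point, that $W_2$-geodesics do not branch on a set of positive measure, one derives that the assumed forward branching set must be $m$-negligible. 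Concretely I would: (i) show the set of points lying in the interior of more than one transport ray, or where rays bifurcate, is Borel (or at least $m$-measurable) and decompose it into countably many pieces on each of which the two emanating directions are quantitatively separated; (ii) on each such piece, build a $d^2$-cyclically monotone competitor and invoke the $\RCD$ regularity recalled in Section \ref{S:rcd} to get a contradiction with the $m$-positivity of the piece.

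Once the branching set is shown to be $m$-null, I would define $\mathcal{T} := \mathcal{T}_e \setminus (\text{forward branch set} \cup \text{backward branch set} \cup \text{other exceptional } m\text{-null sets})$, so that $m(\mathcal{T}_e \setminus \mathcal{T}) = 0$ by construction. For $x \in \mathcal{T}$, the ray $R(x)$ cannot branch forward or backward, hence it is the image of a single geodesic, which gives the first assertion. For the second assertion, take $x \neq y$ in $\mathcal{T}$: if $z \in R(x) \cap R(y)$ is not an initial point (i.e.\ not in $a \cup b$), then $z$ lies in the relative interior of both rays, and using again that $z \notin$ branch set, the two rays must locally coincide through $z$; a connectedness/maximality argument along the rays then upgrades this to $R(x) = R(y)$. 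Thus either $R(x) = R(y)$ or the intersection is confined to $a \cup b$, as claimed.

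The delicate points I expect to fight with are measurability (ensuring the branch sets and the ray map are $m$-measurable so that "$m$-a.e." statements make sense, and that the disintegration of $m$ along the partition into rays behaves well), and making precise the quantitative separation of branching directions needed for the countable decomposition in step (i). The passage from $d$-monotone data to a genuine $d^2$-monotone competitor—so that the $\RCD$ quadratic-cost regularity can be applied as a black box—is the conceptual heart, and is where I anticipate spending most effort; everything after "the branch set is $m$-null" is bookkeeping with transport rays that is by now fairly standard.
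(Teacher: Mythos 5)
Your proposal follows essentially the same route as the paper: one defines the branching sets $A_{\pm}$, measurably selects a pair of non-$R$-related geodesics through each branching point (Lemmas \ref{L:geoingamma} and \ref{L:selectiongeo}), and, assuming $m(A_{+})>0$, exhibits a two-valued plan with absolutely continuous first marginal that is nevertheless $d^{2}$-cyclically monotone, contradicting the $\RCD$ fact that quadratic-cost optimal plans from absolutely continuous marginals are induced by maps; the remaining claims then follow from the equivalence-relation and single-geodesic bookkeeping (Theorem \ref{T:equivalence}, Lemma \ref{L:singlegeo}). The one step you rightly flag as the conceptual heart---passing from $d$-monotone data to a genuine $d^{2}$-monotone competitor---is resolved in the paper by Lemma \ref{L:12monotone}: targets on the two branches are chosen on a common level set of $\f^{d}$ via a monotone rearrangement of the potential values, and the $1$-Lipschitz inequality $|\f^{d}(x)-\f^{d}(y)|\leq d(x,y)$ then yields $d^{2}$-cyclical monotonicity directly.
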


All the terminology used in Theorem \ref{T:1} will be introduced in Section \ref{S:dmonotone}.
Taking advantage of Theorem \ref{T:1} we then obtain the following

\begin{theorem}\label{T:2}
Let $(X,d,m)$ be a metric measure space verifying $\RCD(K,N)$ for $N<\infty$. Let $\mu_{0},\mu_{1} \in \mathcal{P}(X)$ 
with $W_{1}(\mu_{0},\mu_{1}) < \infty$ and $\mu_{0}\ll m$. 
Then there exists a Borel map $T: X \to X$ such that $T_{\sharp} \mu_{0} = \mu_{1}$ and 
\[
\int_{X} d(x,T(x)) \mu_{0}(dx) =  \int_{X \times X} d(x,y) \eta_{opt}(dxdy).
\]
\end{theorem}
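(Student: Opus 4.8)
The plan is to follow the classical Sudakov-type strategy, now made rigorous by Theorem~\ref{T:1}. First I would fix a $d$-cyclically monotone set $\Gamma$ on which the optimal plan $\eta_{opt}$ is concentrated, together with a Kantorovich potential $\f^d$, and consider the associated transport set $\mathcal{T}$ supplied by Theorem~\ref{T:1}. Since $\mu_0 \ll m$ and $m(\mathcal{T}_e \setminus \mathcal{T}) = 0$, the first marginal $\mu_0$ is concentrated on $\mathcal{T}$ (modulo the set of points not moved, which is handled trivially by the identity map), and $\eta_{opt}$ is concentrated on $\Gamma \cap (\mathcal{T} \times \mathcal{T})$. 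By Theorem~\ref{T:1} the transport relation on $\mathcal{T}$ — $x \sim y$ iff $R(x) = R(y)$ — is an equivalence relation whose classes are (the images of) single non-branching geodesics, up to the negligible set of branch/initial points. This is exactly the measurable partition needed to apply the Disintegration Theorem.

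Next I would disintegrate both $m\llcorner_{\mathcal{T}}$ and $\mu_0$ along this partition: writing $q: \mathcal{T} \to Q$ for the quotient map onto a measurable section $Q$ of transport rays, one gets $\mu_0 = \int_Q \mu_{0,\alpha}\, \mathfrak{q}(d\alpha)$ with each $\mu_{0,\alpha}$ concentrated on the ray $R_\alpha$, and similarly $m\llcorner_{\mathcal{T}} = \int_Q m_\alpha\, \mathfrak q(d\alpha)$. The key structural input here is that $\mu_{0,\alpha} \ll m_\alpha$ for $\mathfrak q$-a.e.\ $\alpha$ (from $\mu_0 \ll m$), and — this is the crucial point where the curvature assumption $\RCD(K,N)$ re-enters — that $m_\alpha$ has no atoms, in fact is absolutely continuous with respect to the $1$-dimensional Hausdorff measure along the geodesic $R_\alpha$. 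This continuity of the conditional measures is precisely the property that the $\RCD^*(K,N)$ condition (via its implication of $\mathsf{MCP}(K,N)$ and the behaviour of conditional measures along geodesics) is designed to yield, and it guarantees in particular that $\mu_{0,\alpha}$ is non-atomic. Decomposing $\eta_{opt}$ compatibly gives, for $\mathfrak q$-a.e.\ $\alpha$, a $1$-dimensional optimal transport problem between $\mu_{0,\alpha}$ and $\mu_{1,\alpha}$ on the geodesic $R_\alpha \cong$ an interval of $\R$, with the cost being the arclength distance (monotone in the interval coordinate).

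Then I would invoke the classical one-dimensional theory: since $\mu_{0,\alpha}$ is non-atomic, the monotone rearrangement $T_\alpha: R_\alpha \to R_\alpha$ pushing $\mu_{0,\alpha}$ to $\mu_{1,\alpha}$ exists, is $\mu_{0,\alpha}$-essentially unique, and is optimal for the $1$-dimensional cost; moreover the map $\alpha \mapsto T_\alpha$ can be chosen jointly measurable by a standard selection argument. Gluing, $T(x) := T_{q(x)}(x)$ defines a Borel map $T: X \to X$ with $T_\sharp \mu_0 = \mu_1$, and because the rays are oriented by decreasing $\f^d$ and $T_\alpha$ is the monotone map along each ray, $T$ moves mass along transport rays in the direction dictated by $\Gamma$; its graph is therefore $d$-cyclically monotone, so $(\id, T)_\sharp \mu_0 \in \Pi(\mu_0,\mu_1)$ has the same cost as $\eta_{opt}$, namely $\int d(x,T(x))\,\mu_0(dx) = \int d(x,y)\,\eta_{opt}(dxdy) = W_1(\mu_0,\mu_1)$. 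This both exhibits the optimal map and shows the Monge and Monge–Kantorovich values coincide.

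The main obstacle I anticipate is not the one-dimensional step, which is classical, nor the existence of the disintegration, but establishing the \emph{measurability} of the whole construction and — more delicately — the absolute continuity (and non-atomicity) of the conditional measures $\mu_{0,\alpha}$ with respect to $m_\alpha$, equivalently of the conditionals of $m$ along transport rays. Getting the disintegration of $m$ to be "consistent" (strongly consistent) with the ray decomposition requires the countable separability / measurability of the quotient, which is where Theorem~\ref{T:1} (no branching $m$-a.e.) is essential; and upgrading from "conditionals exist" to "conditionals are non-atomic" is exactly where the full strength of $\RCD^*(K,N)$ with $N < \infty$ must be used, since without a curvature bound the conditional measures could a priori concentrate on endpoints of rays and the monotone rearrangement would fail to exist. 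I would therefore expect the bulk of the work (carried out in Section~\ref{S:dmonotone} and the sections on $\RCD$ spaces) to be devoted to these two points.
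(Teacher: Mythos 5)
Your proposal follows essentially the same route as the paper's proof: disintegration of $m\llcorner_{\mathcal{T}}$ and $\mu_{0}$ along the ray equivalence relation supplied by Theorem~\ref{T:1}, absolute continuity (hence non-atomicity) of the conditional measures obtained from the $\mathsf{MCP}$-type evolution estimate that $\RCD(K,N)$ provides, disintegration of $\eta_{opt}$ to produce the second family of one-dimensional marginals, and gluing of the monotone rearrangements with a measurable-selection argument. The only point to watch is that $\mu_{1}$ need not be concentrated on $\mathcal{T}$ (only on $\mathcal{T}_{e}$, and possibly on endpoints of rays), which is precisely why one disintegrates $\eta_{opt}$ with respect to the partition in the first variable rather than disintegrating $\mu_{1}$ directly --- as you in fact do.
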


In the previous theorem, $W_{1}$ denotes the $L^{1}$-Wasserstein distance on the space of probability measures on $(X,d)$. 

A straightforward corollary of Theorem \ref{T:2} is that the relaxation to the set of transference plan $\Pi(\mu_{0},\mu_{1})$ 
does not lower the value of the minimum:
\begin{align*}
\inf_{T_{\sharp}\mu_{0} = \mu_{1}}  \int_{X} d(x,T(x)) \mu_{0}(dx) \leq &~ \int d(x,T(x)) \mu_{0}(dx)  \crcr
= &~ \min_{\eta \in \Pi(\mu_{0},\mu_{1})}  \int_{X} d(x,y) \eta(dx)  \crcr
\leq  &~ \inf_{T_{\sharp}\mu_{0} = \mu_{1}}  \int_{X} d(x,T(x)) \mu_{0}(dx).
\end{align*}
Hence 
\begin{equation}\label{E:K=M}
\min_{T_{\sharp}\mu_{0} = \mu_{1}}  \int_{X} d(x,T(x)) \mu_{0}(dx) =\min_{\eta \in \Pi(\mu_{0},\mu_{1})}  \int_{X} d(x,y) \eta(dx).
\end{equation}

%%%%%%%%%%%%%%%%%%%%%%%%%%%%%%%
\bigskip
\bigskip

As it will be clear from their proofs, the results contained in Theorem \ref{T:1} and Theorem \ref{T:2} can be obtained 
omitting the $\mathsf{RCD}^{*}$ condition and assuming instead the metric measure space to satisfy the strong $\mathsf{CD}^{*}(K,N)$ condition.
Even if strong $\mathsf{CD}^{*}(K,N)$ is a more general condition than $\mathsf{RCD}^{*}$, the latter is stable with respect to  
measured Gromov-Hausdorff convergence.
Hence we have decided in its favor to state and prove the results contained in this note.

\bigskip
%%%%%%%%%%%%%%%%%%%%%%%%%%%%%%
\emph{The author wish to thank Tapio Rajala for a discussion on an early version of this note.}
%%%%%%%%%
%

\section{$\RCD$ spaces}\label{S:rcd}
Here we briefly give some references for $\RCD(K,N)$ and state some of the main properties of metric measure spaces verifying it. 

%%%%%
Few notations:
we will denote with $\Geo(X) \subset C([0,1], X)$ the space of geodesics endowed with uniform topology and
for a Borel set $F \subset X \times X$, we will often use the notation $F(x)$ for $P_{2}(F \cap \{x \}\times X )$.

For $\mu_{0},\mu_{1} \in \mathcal{P}_{2}(X)$ we consider the following set of optimal geodesics:
\[
\Opt(\mu_{0},\mu_{1}) : = \left\{ \nu \in \mathcal{P}(\Geo(X)) :   \int d(x,y)^{2} (e_{s},e_{t})_{\sharp}\nu   =  (t-s)^{2} W_{2}^{2}(\mu_{0},\mu_{1}), \, 0 \leq s\leq t \leq 1 \right\},
\]
where $W_{2}$ is the $L^{2}$-Wasserstein distance. We write $\nu \in \Opt$,  
if $\nu \in \Opt(e_{0}\,_{\sharp}\nu, e_{1}\,_{\sharp}\nu)$.

Sturm and independently Lott and Villani, using the $L^{2}$-Wasserstein space, introduced a class of metric measure space verifying a generalized
curvature condition called curvature-dimension condition $\mathsf{CD}(K,N)$, with $K,N \in \R$ and $N\geq 2$. The condition models a 
lower bound on Ricci curvature and an upper bound on the dimension. 
See \cite{sturm:MGH1, sturm:MGH2} and \cite{villott:curv} for the precise definitions.
Then a variant called reduced curvature-dimension condition, denoted with $\mathsf{CD}^{*}(K,N)$, has been introduced in \cite{sturm:loc}.

The Riemannian Curvature Dimension condition $\mathsf{RCD}(K,\infty)$, has been introduced by L. Ambrosio, N. Gigli and 
G. Savar\'e in \cite{ambrgisav:rcd}. 
Then the finite dimensional has been studied in \cite{gigli:laplacian, gigli:spliting}
and the precise $\mathsf{RCD}^{*}(K,N)$ has been defined in \cite{Erbarkuwstu:RCD} and \cite{ambmondsav:RCD} with two different approaches.
We refer to these fundamental papers for the precise definitions. 
Here we will make use of some property enjoyed by this class of spaces.

The following theorem is taken from \cite{rajasturm:branch}.
%%%%%%%%%%%%%%%%%%%%%%%%%%%%%%%%%%%%
\begin{theorem}\label{T:existenceinfinity}
Let $(X,d,m)$ be a $\mathsf{RCD}(K,\infty)$ space and $\mu_{0},\mu_{1} \in \mathcal{P}_{2}(X)$ be two measures absolutely continuous w.r.t. $m$. 
Then there exists a unique $\nu \in \Opt(\mu_{0},\mu_{1})$ and this plan is induced by a map and is concentrated on a set of non-branching geodesics.
\end{theorem}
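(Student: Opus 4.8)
I would follow the strategy of \cite{rajasturm:branch}. The one structural fact used beyond the defining $\mathsf{CD}(K,\infty)$ inequality is that an $\RCD(K,\infty)$ space is \emph{strong} $\mathsf{CD}(K,\infty)$: since the relative entropy admits an $\mathrm{EVI}_{K}$-gradient flow, it is $K$-convex along \emph{every} $W_{2}$-geodesic joining two measures of finite entropy, not merely along one distinguished geodesic as $\mathsf{CD}$ alone guarantees. The proof then has three stages. First I would reduce to $\mu_{i}=\rho_{i}m$ with $\rho_{i}\le C<\infty$ and $\supp\mu_{0}\cup\supp\mu_{1}$ bounded: splitting $\mu_{0}$ and $\mu_{1}$ along an optimal coupling into countably many pairs of such sub-measures, the conclusions (concentration on non-branching geodesics, being induced by a map, uniqueness) can be transferred back to the original problem once they are known for the pieces.

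In that setting, an interpolation estimate obtained by iterating the $\mathsf{CD}(K,\infty)$ inequality and using the strict convexity of $r\mapsto r\log r$ produces a plan $\nu\in\Opt(\mu_{0},\mu_{1})$ with $(e_{t})_{\sharp}\nu=\rho_{t}m$ and $\|\rho_{t}\|_{L^{\infty}(m)}\le C'=C'(C,K,\mathrm{diam})$ for all $t$. Note also that, $\mathrm{Ent}(\mu_{0})$ and $\mathrm{Ent}(\mu_{1})$ being finite, the $K$-convexity of the entropy along every $W_{2}$-geodesic forces the intermediate marginals of \emph{any} optimal dynamical plan to be absolutely continuous with finite entropy; this rules out in advance that branching be concealed in a plan with singular intermediate marginals.

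The heart of the argument — and the step I expect to be the main obstacle — is that such a $\nu$ is concentrated on a set of non-branching geodesics. Assume not: then for some $s\in(0,1)$ the restriction map $\gamma\mapsto\gamma|_{[0,s]}$ is not $\nu$-essentially injective, and a Borel selection extracts a positive-$\nu$-mass family of geodesics agreeing on $[0,s]$ and diverging afterwards. Cutting at time $s$ and recombining the shared initial segments with suitably re-coupled terminal segments yields a new optimal dynamical plan between absolutely continuous sub-measures of $\mu_{0}$ and $\mu_{1}$ whose interpolated measure at time $s$ is strictly more concentrated than before — concentrated enough that its entropy would exceed the upper bound imposed by the $K$-convexity of $\mathrm{Ent}$ along the corresponding $W_{2}$-geodesic, the gap being quantified by the strict convexity of $r\log r$. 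The contradiction forces $\nu$ to be non-branching. The genuinely delicate points are the measurability of the selection of the branching family and the precise entropy bookkeeping for the recombined plan.

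Finally, non-branching of $\nu$ together with $\mu_{0}\ll m$ gives the map structure by the now-standard argument extracting a transport map from a $d^{2}$-cyclically monotone set in a non-branching space: $d^{2}$-cyclical monotonicity of $(e_{0},e_{1})_{\sharp}\nu$ combined with non-branching forces two transport geodesics meeting at a point interior to both to coincide, and since $\mu_{0}\ll m$ the set of points interior to some transport geodesic is $\mu_{0}$-negligible; hence the disintegration of $\nu$ over $e_{0}$ is $\mu_{0}$-a.e.\ a Dirac mass and $(e_{0},e_{1})_{\sharp}\nu=(\mathrm{Id},T)_{\sharp}\mu_{0}$. Uniqueness follows by a similar combination of these ingredients (see \cite{rajasturm:branch} for the details): if $\nu_{1},\nu_{2}$ were two optimal dynamical plans, $\frac{1}{2}(\nu_{1}+\nu_{2})$ would again be optimal and, after the decomposition of the first paragraph, fall under the same analysis, hence be induced by a map — impossible unless $\nu_{1}=\nu_{2}$. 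In particular every optimal dynamical plan coincides with $\nu$ and is concentrated on non-branching geodesics, which completes the proof.
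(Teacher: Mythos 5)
First, a point of comparison: the paper does not prove Theorem \ref{T:existenceinfinity} at all --- it is quoted verbatim from \cite{rajasturm:branch} --- so there is no in-paper argument to measure your proposal against; what you have written is a reconstruction of the cited proof. Your overall architecture does match that source: $\mathsf{RCD}(K,\infty)$ implies \emph{strong} $\mathsf{CD}(K,\infty)$ via the $\mathrm{EVI}_K$ characterization, one reduces to bounded densities and bounded supports, one excludes branching by an entropy argument, and map structure plus uniqueness then follow from $d^{2}$-cyclical monotonicity in an essentially non-branching setting.

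However, the step you yourself single out as the heart of the proof is described in a way that would not work. Gluing the shared initial segments of a branching family to ``re-coupled'' terminal segments through the common time-$s$ marginal leaves every one-time marginal $(e_r)_\sharp\nu$ unchanged, so no plan with a ``strictly more concentrated'' interpolation at time $s$ is produced this way; and branching by itself does not push $\mathrm{Ent}\bigl((e_s)_\sharp\nu\bigr)$ above the $K$-convexity upper bound --- as you note, that bound already forces all intermediate marginals to be absolutely continuous with finite entropy, and a branching plan is perfectly compatible with that. The actual mechanism in \cite{rajasturm:branch} is different: one extracts mutually singular sub-plans $\nu^{1}\perp\nu^{2}$ of equal mass whose restrictions to $[0,s]$ coincide (arranging, after a further decomposition, that $(e_1)_\sharp\nu^{1}\perp(e_1)_\sharp\nu^{2}$), and compares $\mathrm{Ent}\bigl((e_r)_\sharp\tfrac12(\nu^{1}+\nu^{2})\bigr)$ with $\tfrac12\bigl[\mathrm{Ent}((e_r)_\sharp\nu^{1})+\mathrm{Ent}((e_r)_\sharp\nu^{2})\bigr]$: these coincide for $r\le s$, while mutual singularity at $r=1$ creates an exact defect of $\log 2$; playing this quantitative defect against the $K$-convexity of the entropy along all three geodesics $r\mapsto(e_r)_\sharp\nu^{1}$, $r\mapsto(e_r)_\sharp\nu^{2}$ and $r\mapsto(e_r)_\sharp\tfrac12(\nu^{1}+\nu^{2})$ is what yields the contradiction. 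Without this (or equivalent) bookkeeping, your outline has a genuine gap exactly where you anticipated trouble. A smaller quibble: in the map-structure step the relevant fact is not that ``the set of points interior to some transport geodesic is $\mu_{0}$-negligible'' (which is neither true nor needed), but that for $(e_{1/2})_\sharp\nu$-a.e.\ point there is a unique transport geodesic through it, which is what $d^{2}$-cyclical monotonicity combined with non-branching provides.
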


For $\nu \in \Opt(\mu_{0},\mu_{1})$  to be concentrated on a set of non-branching geodesic means that for any $t \in [0,1]$ 
the evaluation map $e_{t}$ restricted to $\supp(\pi)$ is invertible, that is there exists a Borel map 
$(e_{t})^{-1} : \supp(\mu_{t}) \to \Geo(X)$ such that 
\[
\nu = \left( (e_{t})^{-1} \right)_{\sharp} (e_{t})_{\sharp} \nu,
\]
for every $t \in [0,1]$. 
%%%%%%%%%%%

Theorem \ref{T:existenceinfinity} has been used in \cite{giglirajasturm:optimalmaps} to prove the following localization result for entropy inequality of $\RCD(K,N)$ spaces.

\begin{proposition}\label{P:localization}
Let $(X,d,m)$ be an $\RCD(K,N)$ space and $\mu_{i} = \r_{i}m \in \mathcal{P}_{2}(X)$, $i = 0,1$ be two given measures. 
Let $\nu \in \Opt(\mu_{0},\mu_{1})$ be the unique optimal geodesic plan of Theorem \ref{T:existenceinfinity}.
If $\mu_{t} = (e_{t})_{\sharp} \pi$, then $\mu_{t} \ll m$ for all $t \in [0,1]$ and if $\mu_{t} =\r_{t} m$ then 
\[
\r_{r}(\gamma_{r})^{-\frac{1}{N}} \geq \r_{s}(\gamma_{s})^{-\frac{1}{N}}  \sigma_{K,N}^{(\frac{t-r}{t-s})}(d(\gamma_{s},\gamma_{t}))
+\r_{t}(\gamma_{t})^{-\frac{1}{N}}  \sigma_{K,N}^{(\frac{r-s}{t-s})}(d(\gamma_{s},\gamma_{t})), \qquad  \nu - a.e. \gamma,
\]
for all $0\leq s \leq r \leq t \leq 1$.
\end{proposition}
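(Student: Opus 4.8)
The plan is to derive the pointwise estimate by \emph{localizing} the integrated entropy inequality encoded in the curvature condition, with the non-branching structure provided by Theorem \ref{T:existenceinfinity} playing the decisive role. Two consequences of $\RCD(K,N)$ enter. First, $\RCD(K,N)$ implies the reduced curvature-dimension condition $\mathsf{CD}^{*}(K,N)$: for any pair of absolutely continuous probability measures there is an optimal geodesic plan along which the R\'enyi-type entropy $S_{N}(\cdot\mid m) = -\int \r^{1-1/N}\,dm$ satisfies the convexity inequality with distortion coefficients $\sigma_{K,N}^{(\cdot)}$. Second, $\RCD(K,N)$ implies $\mathsf{RCD}(K,\infty)$, so Theorem \ref{T:existenceinfinity} applies: the optimal plan between absolutely continuous measures is unique, induced by a map, and concentrated on non-branching geodesics, whence every evaluation map $e_{t}$ is injective on $\supp\nu$. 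The absolute continuity $\mu_{t}\ll m$ for every $t$ — needed even to state the inequality — is the known fact that in an essentially non-branching $\mathsf{CD}^{*}(K,N)$ space the intermediate measures of a $W_{2}$-geodesic joining absolutely continuous measures remain absolutely continuous.

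Fix $0\le s\le r\le t\le 1$ and set $\theta = \tfrac{r-s}{t-s}$, so that $1-\theta = \tfrac{t-r}{t-s}$. Restricting each geodesic of $\supp\nu$ to $[s,t]$ and rescaling it to $[0,1]$ pushes $\nu$ forward to an element of $\Opt(\mu_{s},\mu_{t})$, which by Theorem \ref{T:existenceinfinity} is \emph{the} optimal plan there; hence the $\mathsf{CD}^{*}(K,N)$ inequality is realized precisely along it. Writing that inequality for the interior parameter $\theta$ and using the change of variables afforded by the injective map $e_{r}$ (so that $\int \r_{r}^{1-1/N}\,dm = \int \r_{r}(\gamma_{r})^{-1/N}\,\nu(d\gamma)$), one obtains the integrated form
\[
\int \r_{r}(\gamma_{r})^{-1/N}\,\nu(d\gamma)\ \geq\ \int\Bigl[\sigma_{K,N}^{(1-\theta)}(d(\gamma_{s},\gamma_{t}))\,\r_{s}(\gamma_{s})^{-1/N}+\sigma_{K,N}^{(\theta)}(d(\gamma_{s},\gamma_{t}))\,\r_{t}(\gamma_{t})^{-1/N}\Bigr]\,\nu(d\gamma).
\]

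To pass from this to the pointwise bound I would repeat the argument with $\nu$ replaced by the conditioned plan $\nu_{A}$, the normalized restriction of $\nu$ to an arbitrary Borel set $A\subset\Geo(X)$. By injectivity of $e_{0}$ and $e_{1}$ on $\supp\nu$ the marginals $(e_{0})_{\sharp}\nu_{A}$ and $(e_{1})_{\sharp}\nu_{A}$ are, up to the normalization $\nu(A)^{-1}$, restrictions of $\mu_{0}$ and $\mu_{1}$ to Borel sets, hence still absolutely continuous; $\nu_{A}$ is the unique optimal plan between them; and the $\mathsf{CD}^{*}(K,N)$ inequality for this pair, after cancelling the common factor $\nu(A)^{1/N-1}$, is exactly the displayed estimate with each integral restricted to $A$. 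Since $A$ is arbitrary, a standard measure-theoretic lemma forces the integrand inequality for $\nu$-a.e.\ $\gamma$. Running this for a countable dense family of triples $(s,r,t)$ and invoking the continuity of $\sigma_{K,N}^{(\cdot)}$ then removes the restriction to a fixed triple, giving the claim.

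The main obstacle is precisely the localization step: $\mathsf{CD}^{*}(K,N)$ guarantees only the existence of \emph{some} optimal plan realizing the entropy inequality, so one must be sure that for each conditioned pair of marginals this plan coincides with the restriction $\nu_{A}$ of the fixed plan $\nu$. This is where essential non-branching is indispensable — it yields uniqueness of the optimal plan between the conditioned marginals, identifying it with $\nu_{A}$, and it keeps the conditioned marginals absolutely continuous, which is what makes the density comparison along $\nu$ meaningful. Beyond this only routine care is needed: the degenerate situation with $K>0$ and $d(\gamma_{s},\gamma_{t})$ past the threshold where $\sigma_{K,N}^{(\cdot)}=+\infty$, and the commutation of ``restrict to $A$'' with the evaluation maps, which is immediate once each $e_{t}$ is injective on $\supp\nu$.
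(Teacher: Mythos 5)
Your argument is correct and is essentially the standard one: the paper does not actually prove Proposition \ref{P:localization} but quotes it from \cite{giglirajasturm:optimalmaps}, and the localization mechanism you describe (restrict $\nu$ to an arbitrary Borel set, use essential non-branching to identify the conditioned plan as the unique optimal plan between its still-absolutely-continuous marginals, cancel the common factor $\nu(A)^{1/N-1}$, and let $A$ vary) is precisely the argument used there. The only ingredient you import rather than derive is the absolute continuity $\mu_{t}\ll m$ of the intermediate measures, which is indeed established in that same reference and is legitimately quotable here.
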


Again in \cite{giglirajasturm:optimalmaps} it is proven that if $N<\infty$ and the first marginal is absolutely continuous with respect to $m$, then there exists a unique optimal plan and it is concentrated on the graph of a Borel function. 
The optimal plan is also induced by an element of $\pi \in \Opt$ concentrated on a set of non-branching geodesics.
Note that all these results are for geodesics in the $L^{2}$-Wasserstein space, 
while the object of our investigation are $d$-cyclically monotone sets, usually having lower ``regularity'' than 
$d^{2}$-cyclically monotone sets.
Finally note that from $\RCD(K,N)$ it follows that $(X,d,m) = (\supp(m),d,m)$, and the metric space $(X,d)$ 
is geodesic and proper (provided $N<\infty$).

From now on we will assume $(X,d,m)$ to verify $\RCD(K,N)$ for some $K,N \in \R$ with $N \geq 2$.

%%%%%%%%%%%%%%%%%%%%%%%%%%%%%%
\section{$d$-geodesics and $d^{2}$-geodesics}\label{S:dmonotone}
To avoid the trivial case we can assume that the two marginal measures have finite $L^{1}$-Wasserstein distance, 
$W_{1}(\mu_{0},\mu_{1}) < \infty$. Consequently we infere
the existence of  $\eta \in \Pi(\mu_{0},\mu_{1})$,
such that
\[
\int_{X\times X} d(x,y)\eta(dxdy) = \inf \left\{ \int_{X\times X} d(x,y)\pi(dxdy) : \pi \in \Pi(\mu_{0},\mu_{1}) \right\} 
= W_{1}(\mu_{0},\mu_{1}),
\]
where $\Pi(\mu_{0},\mu_{1})$ is the set of transport plans,
\[
\Pi(\mu_{0},\mu_{1}): = \{ \pi \in \mathcal{P}(X\times X): (P_{1})_{\sharp}\pi =\mu_{0},  (P_{2})_{\sharp}\pi =\mu_{1}\}.
\]

The set of optimal transport plans, i.e. realizing the previous identity, will be denoted with $\Pi_{opt}(\mu_{0},\mu_{1})$.
Since the cost is finite, we can also assume the existence of a Kantorovich potential, that is a $1$-Lipschitz function 
$\f^{d} : X \to \erre$, such that 
\[
\eta \in \Pi_{opt}(\mu_{0},\mu_{1}) \iff \eta \left(  \{ (x,y) \in X\times X : \f^{d}(x) - \f^{d}(y) = d(x,y) \} \right) = 1.
\]
We also use the following notation: 
\begin{equation}\label{E:Gamma}
\Gamma : = \{ (x,y) \in X\times X : \f^{d}(x) - \f^{d}(y) = d(x,y) \}.
\end{equation}
Almost by definition, the set $\Gamma$ is a $d$-cyclically monotone set.

The following is a standard fact of $d$-cyclically monotone sets.
\begin{lemma}\label{L:cicli}
Let $(x,y) \in X\times X$ be an element of $\Gamma$. Let $\gamma \in \Geo(X)$ be such that $\gamma_{0} = x$ 
and $\gamma_{1}=y$. Then
\[
(\gamma_{s},\gamma_{t}) \in \Gamma, 
\]
for all $0\leq s \leq t \leq 1$.
\end{lemma}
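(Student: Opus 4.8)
The plan is to exploit the $1$-Lipschitz property of $\f^{d}$ together with the triangle inequality, which is exactly the mechanism behind $d$-cyclical monotonicity restricted to a single geodesic. Since $(x,y) \in \Gamma$, we have $\f^{d}(x) - \f^{d}(y) = d(x,y)$, and for a geodesic $\gamma$ with $\gamma_0 = x$, $\gamma_1 = y$ we know $d(\gamma_s,\gamma_t) = (t-s)\,d(x,y)$ for $0 \leq s \leq t \leq 1$. I would first record the chain of inequalities
\[
d(x,y) = \f^{d}(x) - \f^{d}(y) = \bigl(\f^{d}(\gamma_0) - \f^{d}(\gamma_s)\bigr) + \bigl(\f^{d}(\gamma_s) - \f^{d}(\gamma_t)\bigr) + \bigl(\f^{d}(\gamma_t) - \f^{d}(\gamma_1)\bigr),
\]
and then bound each of the three bracketed terms from above using $1$-Lipschitzianity of $\f^{d}$: each difference $\f^{d}(\gamma_a) - \f^{d}(\gamma_b)$ is at most $d(\gamma_a,\gamma_b)$.

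Adding these bounds gives $d(x,y) \leq d(\gamma_0,\gamma_s) + d(\gamma_s,\gamma_t) + d(\gamma_t,\gamma_1) = \bigl(s + (t-s) + (1-t)\bigr)\,d(x,y) = d(x,y)$, so the inequality is forced to be an equality throughout. In particular, the middle term must satisfy $\f^{d}(\gamma_s) - \f^{d}(\gamma_t) = d(\gamma_s,\gamma_t)$ (otherwise one of the three inequalities would be strict and the sum would be strictly less than $d(x,y)$, a contradiction). By definition of $\Gamma$ in \eqref{E:Gamma}, this says precisely $(\gamma_s,\gamma_t) \in \Gamma$, which is the claim. The only case needing a side remark is $d(x,y) = 0$, where $\gamma$ is constant and the statement is trivial; otherwise the argument above applies verbatim.

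There is no real obstacle here: the lemma is a soft consequence of the potential being $1$-Lipschitz and the cost being a metric, and the "rigidity" of the inequality chain does all the work. The one point to be slightly careful about is that we must know such a geodesic $\gamma$ exists and that its restriction to $[s,t]$ (reparametrized) is again a geodesic with the stated length — but this is automatic since $(X,d)$ is a geodesic space under $\RCD(K,N)$ and any sub-arc of a constant-speed geodesic is a constant-speed geodesic. I would therefore present the proof as the short computation above, emphasizing the forced equality in the triangle inequality as the key step.
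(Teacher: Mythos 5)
Your proof is correct and is essentially the same argument as the paper's: both rest on writing $\f^{d}(\gamma_s)-\f^{d}(\gamma_t)$ in terms of the endpoint identity $\f^{d}(\gamma_0)-\f^{d}(\gamma_1)=d(\gamma_0,\gamma_1)$, the $1$-Lipschitz bound on the two outer increments, and the additivity of distance along the geodesic, which forces equality in the middle term. The paper simply reads the same computation as a lower bound $\f^{d}(\gamma_s)-\f^{d}(\gamma_t)\geq d(\gamma_s,\gamma_t)$ rather than as a telescoping rigidity argument; the content is identical.
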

\begin{proof}
Take $0\leq s \leq t \leq 1$ and note that
\begin{align*}
 \f^{d}(\gamma_{s})& - \f^{d}(\gamma_{t})\crcr
 = &~ \f^{d}(\gamma_{s}) - \f^{d}(\gamma_{t}) + \f^{d}(\gamma_{0}) - \f^{d}(\gamma_{0})
+ \f^{d}(\gamma_{1}) - \f^{d}(\gamma_{1})\crcr
\geq &~d(\gamma_{0},\gamma_{1}) - d(\gamma_{0},\gamma_{s}) - d(\gamma_{t},\gamma_{1}) \crcr
= &~ d(\gamma_{s},\gamma_{t}).
\end{align*}
The claim follows.
\end{proof}

It is therefore natural to consider the set of geodesics $G \subset \Geo(X)$ such that 
\[
\gamma \in G \iff \{ (\gamma_{s},\gamma_{t}) : 0\leq s \leq t \leq 1  \} \subset \Gamma,
\]
that is $G : = \{ \gamma \in \Geo(X) : (\gamma_{0},\gamma_{1}) \in \Gamma \}$.

We now recall some definitions, already given in \cite{biacava:streconv}, 
that will be needed to describe the structure of $\Gamma$.

\begin{definition}\label{D:transport}
We define the set of \emph{transport rays} by 
\[
R = \Gamma \cup \Gamma^{-1},
\]
where $\Gamma^{-1}= \{ (x,y) \in X \times X : (y,x) \in \Gamma \}$. The set of \emph{initial points} and \emph{final points} 
respectively by  
\begin{align*}
a :=& \{ z \in X: \nexists \, x \in X, (x,z) \in \Gamma, d(x,z) > 0  \}, \crcr
b :=& \{ z \in X: \nexists \, x \in X, (z,x) \in \Gamma, d(x,z) > 0 \}.
\end{align*}
The set of \emph{end points} is $a \cup b$.
We also define 
%the \emph{transport set}
%\[
%\mathcal{T} = P_{1}(\Gamma \setminus \{ (x,y) \in X^{2} : d(x,y) = 0 \}) \cap 
%P_{1}(\Gamma^{-1}\setminus \{ (x,y) \in X^{2} : d(x,y) = 0 \})
%\]
%and 
the \emph{transport set with end points}: 
\[
\mathcal{T}_{e} = P_{1}(\Gamma \setminus \{ x = y \}) \cup 
P_{1}(\Gamma^{-1}\setminus \{ x=y \}).
\]
where $\{ x = y\}$ stays for $\{ (x,y) \in X^{2} : d(x,y) = 0 \}$.
\end{definition}

\begin{remark}\label{R:regularity}
Here we discuss the measurability of the sets introduced in Definition \ref{D:transport}.
Since $\f^{d}$ is $1$-Lipschitz, $\Gamma$ is closed and therefore $\Gamma^{-1}$ and $R$ are closed as well.
Moreover thanks to curvature assumption the space is proper, hence the sets $\Gamma, \Gamma^{-1}, R$ are $\sigma$-compact.

Then we look at the set of initial and final points: 
\[
a = P_{2} \left( \Gamma \cap \{ (x,z) \in X\times X : d(x,z) > 0 \} \right)^{c}, \qquad 
b = P_{1} \left( \Gamma \cap \{ (x,z) \in X\times X : d(x,z) > 0 \} \right)^{c}.
\]
Since $\{ (x,z) \in X\times X : d(x,z) > 0 \} = \cup_{n} \{ (x,z) \in X\times X : d(x,z) \geq 1/n \}$, it follows that both $a$ and $b$ are the complement of  $\sigma$-compact sets. Hence $a$ and $b$ are Borel sets. Reasoning as before, it follows that 
%both $\mathcal{T}$ and 
$\mathcal{T}_{e}$ is a $\sigma$-compact set. 
%To conclude we observe that
%\[
%\mathcal{T} = \mathcal{T}_{e} \setminus  \left( a \cup b \right), \qquad 
%\mathcal{T}_{e} = \mathcal{T}  \cup a \cup b.
%\]
\end{remark}

Next Lemma permits to reduce the analysis of the existence of solutions of the Monge problem on the whole $X$ 
to the same problem restricted to the transport set with end points.

\begin{lemma} \label{L:mapoutside}
Let $\eta \in \Pi_{opt}(\mu_{0},\mu_{1})$, then
\[
\eta(\mathcal{T}_e \times \mathcal{T}_e \cup \{x = y\}) = 1.
\]
\end{lemma}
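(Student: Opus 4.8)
The plan is to show that the complement set $(\mathcal{T}_e \times \mathcal{T}_e \cup \{x=y\})^c$ is $\eta$-null for every optimal plan $\eta$. First I would recall that $\eta$ is concentrated on $\Gamma$, so it suffices to analyze pairs $(x,y) \in \Gamma$ and show that, apart from the diagonal $\{x=y\}$, both coordinates lie in $\mathcal{T}_e$. So fix $(x,y) \in \Gamma$ with $d(x,y) > 0$. By the very definition of $\mathcal{T}_e = P_1(\Gamma \setminus \{x=y\}) \cup P_1(\Gamma^{-1} \setminus \{x=y\})$, the point $x$ belongs to $\mathcal{T}_e$ simply because $(x,y) \in \Gamma \setminus \{x=y\}$. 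For the second coordinate $y$, note that $(y,x) \in \Gamma^{-1} \setminus \{x=y\}$, hence $y \in P_1(\Gamma^{-1} \setminus \{x=y\}) \subset \mathcal{T}_e$ as well. Therefore
\[
\Gamma \subset \big( \mathcal{T}_e \times \mathcal{T}_e \big) \cup \{x=y\},
\]
and since $\eta(\Gamma) = 1$ for $\eta \in \Pi_{opt}(\mu_0,\mu_1)$ (this is the characterization of optimal plans via the Kantorovich potential $\f^d$ recalled before \eqref{E:Gamma}), the claim follows immediately by monotonicity of the measure $\eta$.

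The only subtlety worth spelling out is a measurability check: the set $(\mathcal{T}_e \times \mathcal{T}_e) \cup \{x=y\}$ must be $\eta$-measurable for the statement to make sense. This is handled by Remark \ref{R:regularity}, where $\mathcal{T}_e$ is shown to be $\sigma$-compact, hence Borel; the diagonal-type set $\{x=y\} = \{(x,y) : d(x,y)=0\}$ is closed by continuity of $d$; so the union is Borel in $X \times X$, and in particular $\eta$-measurable.

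I do not expect any real obstacle here — the lemma is essentially a tautological consequence of how $\mathcal{T}_e$ is defined (it is literally the projection of $\Gamma$ and $\Gamma^{-1}$ off the diagonal) together with the fact that optimal plans are concentrated on $\Gamma$. The one place where care is needed is to remember to include the diagonal $\{x=y\}$ in the exceptional set, since $\Gamma$ always contains the points fixed by the potential (pairs with $\f^d(x) = \f^d(y)$ and $d(x,y)=0$), and such pairs are deliberately excluded from $\mathcal{T}_e$.
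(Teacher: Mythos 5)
Your proof is correct and follows essentially the same route as the paper: observe that for $(x,y)\in\Gamma$ off the diagonal both projections land in $\mathcal{T}_e$ by definition, so $\Gamma\setminus\{x=y\}\subset\mathcal{T}_e\times\mathcal{T}_e$, and conclude from $\eta(\Gamma)=1$. The added measurability check via Remark \ref{R:regularity} is a harmless (and welcome) extra.
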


\begin{proof}
It is enough to observe that if $(z,w) \in \Gamma$ with $z \neq w$, then $w \in \Gamma(z)$ and $z \in \Gamma^{-1}(w)$ 
and therefore
\[
(z,w) \in \mathcal{T}_{e}\times \mathcal{T}_{e}.
\]
Hence $\Gamma \setminus \{x = y\}  \subset   \mathcal{T}_e \times \mathcal{T}_e$. Since $\eta(\Gamma) =1$, 
the claim follows.
\end{proof}

As a consequence, $\mu_{0}(\mathcal{T}_e) = \mu_{1}(\mathcal{T}_e)$ and any optimal map $T$ such that 
$T_\sharp \mu_{0} \llcorner_{\mathcal{T}_e}= \mu_{1} \llcorner_{\mathcal{T}_e}$ 
can be extended to an optimal map $T'$ with $ T^{'}_\sharp \mu_{0} = \mu_{1}$ with the same cost by setting
\begin{equation}
\label{E:extere}
T'(x) =
\begin{cases}
T(x) & x \in \mathcal{T}_e \crcr
x & x \notin \mathcal{T}_e.
\end{cases}
\end{equation}

\bigskip
Using the terminology introduced so far,
we explain the strategy we will follow to prove existence of an optimal map: 
first we need to find a suitable subset $\mathcal{T}$ of $\mathcal{T}_{e}$ called the \emph{transport set}, with 
$m(\mathcal{T}_{e} \setminus \mathcal{T}) = 0$, enjoying better geometric properties than $\mathcal{T}_{e}$ (remove branching geodesics). Then
\begin{enumerate}
\item prove that for every $x\in \mathcal{T}$ 
there exists only one unparametrized geodesic passing through $x$ and contained in $\mathcal{T}_{e}$;
\item reduce the $L^{1}$ optimal transport problem to a 1-dimensional $L^{1}$ optimal transport problem along each unparametrized geodesic;
\item prove regularity (i.e. absence of atoms) of conditional probability for the 1-dimensional $L^{1}$ optimal transport problem.
\end{enumerate}
Once these three points have been accomplished, one obtains the existence of an optimal map for each 
1-dimensional $L^{1}$ optimal transport problem, by considering for instance the unique monotone rearrangement between the two 
1-dimensional measures. Then glueing all the 1-dimensional optimal maps, one obtain a global optimal map. 

We recall the one dimensional result for the Monge problem \cite{villa:Oldnew}, that will be used as a building block.

\begin{theorem}
\label{T:oneDmonge}
Let $\mu_{0}$, $\mu_{1}$ be probability measures on $\erre$, $\mu_{0}$ with no atoms, and let
\[
H(s) := \mu((-\infty,s)), \quad F(t) := \nu((-\infty,t)),
\]
be the left-continuous distribution functions of $\mu_{0}$ and $\mu_{1}$ respectively. Then the following holds.
\begin{enumerate}
\item The non decreasing function $T : \erre \to \erre \cup [-\infty,+\infty)$ defined by
\[
T(s) := \sup \big\{ t \in \erre : F(t) \leq H(s) \big\}
\]
maps $\mu_{0}$ to $\mu_{1}$. Moreover any other non decreasing map $T'$ such that $T'_\sharp \mu_{0} = \mu_{1}$ coincides with $T$ on the support of $\mu_{0}$ up to a countable set.
\item If $\phi : [0,+\infty] \to \erre$ is non decreasing and convex, then $T$ is an optimal transport relative to the cost $c(s,t) = \phi(|s-t|)$. Moreover $T$ is the unique optimal transference map if $\phi$ is strictly convex.
\end{enumerate}
\end{theorem}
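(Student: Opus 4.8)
The plan is to obtain part (1) from the probability integral transform and part (2) from the submodularity of the cost. For part (1), since $\mu_{0}$ has no atoms the function $H$ is continuous and non decreasing, so for every $u \in (0,1)$ the sublevel set $\{s : H(s) < u\}$ is a half--line $(-\infty, s_{u})$ with $H(s_{u}) = u$, that is $H_{\sharp}\mu_{0} = \mathcal{L}^{1}\llcorner_{[0,1]}$. Writing $G(u) := \sup\{t : F(t) \le u\}$ for the generalized inverse of $F$, one reads off from the definitions that $T = G \circ H$. The key elementary step is then the equivalence
\[
G(u) < t \quad \Longleftrightarrow \quad u < F(t),
\]
in which the \emph{left}--continuity of $F$ is precisely what upgrades the one--sided implication valid for an arbitrary non decreasing $F$ into a genuine equivalence (it is here that the left--continuous convention for the distribution functions is used). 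Consequently $\{u : G(u) < t\} = (-\infty, F(t))$, so the left--continuous distribution function of $G_{\sharp}(\mathcal{L}^{1}\llcorner_{[0,1]})$ is exactly $F$, i.e. $G_{\sharp}(\mathcal{L}^{1}\llcorner_{[0,1]}) = \mu_{1}$; composing, $T_{\sharp}\mu_{0} = G_{\sharp}H_{\sharp}\mu_{0} = \mu_{1}$. Monotonicity of $T$ is immediate, as $T(s)$ is a supremum of a set that increases with $H(s)$.

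For the uniqueness in part (1), let $T'$ be any non decreasing map with $T'_{\sharp}\mu_{0} = \mu_{1}$. Both $T$ and $T'$ are monotone, hence continuous off an at most countable set; enlarge it by the (again at most countable) set of points of $\supp\mu_{0}$ which are not two--sided accumulation points of $\supp\mu_{0}$, and call the union $C$. For $s_{0} \in \supp\mu_{0} \setminus C$ one shows $T(s_{0}) = T'(s_{0})$ by contradiction: suppose $T(s_{0}) < t^{*} < T'(s_{0})$ (the case $T'(s_{0}) < T(s_{0})$ being symmetric). Continuity of $T'$ at $s_{0}$ forces $\{T' \le t^{*}\} \subseteq (-\infty, s_{0} - \delta]$ for some $\delta > 0$, while continuity of $T$ at $s_{0}$ forces $\{T < t^{*}\} \supseteq (-\infty, s_{0} + \delta')$ for some $\delta' > 0$; applying $\mu_{0}$ to these inclusions, using $T_{\sharp}\mu_{0} = \mu_{1} = T'_{\sharp}\mu_{0}$ together with the fact that the atomless $\mu_{0}$ charges both one--sided neighbourhoods of the support point $s_{0}$, yields the chain $H(s_{0}) < \mu_{1}((-\infty, t^{*})) \le \mu_{1}((-\infty, t^{*}]) < H(s_{0})$, absurd. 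Hence $T = T'$ on $\supp\mu_{0} \setminus C$ with $C$ countable, which is the asserted statement and in particular gives $T = T'$ $\mu_{0}$--a.e.

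For part (2), if every $\pi \in \Pi(\mu_{0},\mu_{1})$ has infinite cost then $T$ is trivially optimal, so assume some plan has finite cost. I would invoke the classical rearrangement inequality: since $\phi$ is non decreasing and convex, $c(s,t) = \phi(|s-t|)$ is submodular (its mixed second difference is non positive, as a measure, once one accounts for the corner of $|\cdot|$ on the diagonal), and therefore $\int c\, d\pi \ge \int c\, d\pi^{\mathrm{mon}}$ for every $\pi \in \Pi(\mu_{0},\mu_{1})$, where $\pi^{\mathrm{mon}}$ is the monotone coupling, i.e. the one maximizing $\pi(\{s \ge u,\ t \ge v\})$ simultaneously in $(u,v)$ (Fr\'echet's bound). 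By part (1), $(\mathrm{id},T)_{\sharp}\mu_{0} \in \Pi(\mu_{0},\mu_{1})$, and it is monotone because $T$ is non decreasing; since $\mu_{0}$ is atomless the monotone coupling is unique, so $\pi^{\mathrm{mon}} = (\mathrm{id},T)_{\sharp}\mu_{0}$ and $\int c\, d\pi^{\mathrm{mon}} = \int \phi(|s - T(s)|)\, \mu_{0}(ds)$: thus $T$ is optimal for $c$. When $\phi$ is strictly convex the mixed difference is strictly negative off the diagonal, so the inequality is strict unless $\pi$ already maximizes every survival function, i.e. unless $\pi = \pi^{\mathrm{mon}}$; applying this to the plan induced by any competing optimal map shows the optimal transference map is unique.

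The main obstacle I expect is the uniqueness assertion in (1): the pushforward identity and the submodularity step are clean, but deciding exactly which countable set to discard, and controlling an atomless $\mu_{0}$ whose support may be disconnected or thin (so that $T$ can jump even at points of $\supp\mu_{0}$), requires the careful one--sided limit bookkeeping sketched above. A secondary nuisance is making the rearrangement inequality — equivalently, the statement that a $c$-cyclically monotone plan with atomless first marginal is the monotone coupling — fully rigorous for a merely convex, possibly non--differentiable $\phi$.
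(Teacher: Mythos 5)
The paper does not actually prove Theorem \ref{T:oneDmonge}: it is recalled as a classical one-dimensional building block and attributed to \cite{villa:Oldnew}, so there is no in-paper argument to compare against. Your proof is correct and is essentially the standard one from that reference: the quantile construction $T=G\circ H$ with $H_{\sharp}\mu_{0}=\mathcal{L}^{1}\llcorner_{[0,1]}$ and the left-continuity of $F$ yielding $\{u:G(u)<t\}=[0,F(t))$; the one-sided accumulation-point bookkeeping that isolates the countable exceptional set in the uniqueness statement (your contradiction chain $H(s_{0})<\mu_{1}((-\infty,t^{*}))\leq\mu_{1}((-\infty,t^{*}])<H(s_{0})$ is exactly right, using that an atomless $\mu_{0}$ charges both sides of a two-sided accumulation point of $\supp\mu_{0}$); and the classical rearrangement inequality for the submodular cost $\phi(|s-t|)$ in part (2). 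The only step you defer to the literature is that last inequality and its strict version for strictly convex $\phi$, which is the same level of detail at which the paper itself operates, since it cites the entire theorem without proof.
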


%The previous program can be rewritten in the following manner. Point $(1)$ is equivalent to say that the set 
%$R$ is an equivalence relation over a well-prepared subset of $\mathcal{T}$ 
%and each equivalence class is an unparametrized geodesic. 
%Then one can disintegrate the reference measure as follows,
%\[
%m\llcorner_{\mathcal{T}} = \int m_{\alpha} q(d\alpha), 
%\]
%with $m_{\alpha}$ supported on a one
%
%\subsection{$L^{2}$-geodesics via $d$-monotonicity}

\section{The transport set}
We now prove that the set of transport rays $R$
is an equivalence relation on a subset of $\mathcal{T}_{e}$. In order to do so, we study the branching geodesics in $\Gamma$. 
The presence of branching structures inside $\Gamma$ can be modeled by the existence of  
$x, z,w \in \mathcal{T}_{e}$ such that 
\[
(x,z), (x,w) \in \Gamma, \quad (z,w) \notin R.
\]
Actually the previous condition only describes branching in the direction given by $\Gamma$. 
Branching in the direction of $\Gamma^{-1}$ will be treated analogously. 

In the next Lemma, using Lemma \ref{L:cicli}, we prove that, once a branching happens, 
there exists two distinct geodesics, both contained in $\Gamma(x)$, 
that are not in relation in the sense of $R$. Recall that $G \subset \Geo(X)$ is the set of geodesics $\gamma$ such that 
\[
(\gamma_{s},\gamma_{t}) \in \Gamma, \qquad 0\leq s\leq t \leq 1.
\]

\begin{lemma}\label{L:geoingamma}
Let $x \in \mathcal{T}_{e}$ and $z,w \in \mathcal{T}_{e}$ be such that $z,w \in \Gamma(x)$ and $(z,w) \notin R$. 
Then there exist two distinct geodesics $\gamma^{1},\gamma^{2} \in \Geo(X)$ 
such that 
\begin{itemize}
\item[-] $\gamma^{1}, \gamma^{2} \in G$; 
\item[-] $(x,\gamma_{s}^{1}), (x,\gamma_{s}^{2}) \in \Gamma$ for all $s \in [0,1]$;
\item[-] $(\gamma_{s}^{1},\gamma^{2}_{s}) \notin R$ for all $s \in [0,1]$; 
\item[-] $\f^{d}(\gamma^{1}_{s}) = \f^{d}(\gamma^{2}_{s})$ for all $s \in [0,1]$.
\end{itemize}
Moreover both geodesics are non-constant.
\end{lemma}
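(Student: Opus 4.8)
The plan is to obtain $\gamma^{1}$ and $\gamma^{2}$ as affinely reparametrized sub-arcs of two geodesics running from $x$ towards $z$ and towards $w$, after a preliminary normalization that makes the Kantorovich potential $\f^{d}$ decrease at the same rate along each of them. I would start by recording the elementary consequences of the hypotheses: $x$, $z$, $w$ are pairwise distinct, since $z=x$ or $w=x$ would give $(z,w)=(x,w)\in\Gamma\subset R$, and $(z,w)\notin R$ forbids $z=w$; hence $d(x,z),d(x,w)>0$. Assuming without loss of generality $d(x,z)\le d(x,w)$, I fix a geodesic $\sigma$ from $x$ to $w$ (recall $(X,d)$ is geodesic) and replace $w$ by the point $w':=\sigma_{d(x,z)/d(x,w)}$ of $[x,w]$ at distance $d(x,z)$ from $x$. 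Lemma~\ref{L:cicli}, applied to $\sigma\in G$, gives $(x,w')\in\Gamma$ and $\f^{d}(w')=\f^{d}(x)-d(x,z)=\f^{d}(z)$; moreover $z\ne w'$, since $z=w'$ would make Lemma~\ref{L:cicli} yield $(z,w)=(w',w)\in\Gamma\subset R$. As $\f^{d}(z)=\f^{d}(w')$, the inequality $z\ne w'$ is exactly $(z,w')\notin R$. We are thus reduced to the case $(x,z),(x,w')\in\Gamma$, $(z,w')\notin R$ and $d(x,z)=d(x,w')=:\ell>0$.

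Next I would take constant-speed geodesics $\gamma^{1},\gamma^{2}\in\Geo(X)$ on $[0,1]$ joining $x$ to $z$ and $x$ to $w'$. The crucial point is that Lemma~\ref{L:cicli} forces $\f^{d}$ to be \emph{affine} along any $\gamma\in G$, with $\f^{d}(\gamma_{s})-\f^{d}(\gamma_{t})=d(\gamma_{s},\gamma_{t})=(t-s)\,d(\gamma_{0},\gamma_{1})$; applied to $\gamma^{1}$ and $\gamma^{2}$, whose right endpoints lie at the same distance $\ell$ from $x$, this gives $\f^{d}(\gamma^{1}_{s})=\f^{d}(x)-s\ell=\f^{d}(\gamma^{2}_{s})$ for all $s$. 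Thus the fourth requested property holds automatically; the second one does too, since $\gamma^{i}_{0}=x$ and $(x,\gamma^{i}_{1})\in\Gamma$ give $(x,\gamma^{i}_{s})\in\Gamma$ by Lemma~\ref{L:cicli}; and, because the potentials agree, $(\gamma^{1}_{s},\gamma^{2}_{s})\in R$ is equivalent to $d(\gamma^{1}_{s},\gamma^{2}_{s})=0$, i.e. to $\gamma^{1}_{s}=\gamma^{2}_{s}$. So everything is reduced to producing two geodesics starting at $x$ (with right endpoints $z$ and $w'$) that are distinct at every time.

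The one genuine difficulty, and the step I expect to be the only subtlety, is that $\gamma^{1}$ and $\gamma^{2}$ both start at $x$ and may coincide on an initial sub-interval (and, with no non-branching assumption at hand, could in principle even separate and meet again). I would remove this by truncation: the set $C:=\{s\in[0,1]:\gamma^{1}_{s}=\gamma^{2}_{s}\}$ is closed, contains $0$ and, since $\gamma^{1}_{1}=z\ne w'=\gamma^{2}_{1}$, omits $1$; hence $t_{0}:=\max C<1$ and $\gamma^{1}_{s}\ne\gamma^{2}_{s}$ for every $s>t_{0}$. Fixing any $t_{1}\in(t_{0},1)$ and setting $\tilde\gamma^{i}_{s}:=\gamma^{i}_{t_{1}+s(1-t_{1})}$ gives constant-speed geodesics on $[0,1]$ for which one checks directly that $\tilde\gamma^{i}\in G$ and $(x,\tilde\gamma^{i}_{s})\in\Gamma$ (both from Lemma~\ref{L:cicli}, the relevant parameters staying in $[0,1]$), that $\f^{d}(\tilde\gamma^{1}_{s})=\f^{d}(\tilde\gamma^{2}_{s})$ (from the previous paragraph), that $\tilde\gamma^{1}_{s}\ne\tilde\gamma^{2}_{s}$ for all $s$ (as $t_{1}+s(1-t_{1})>t_{0}$), hence $(\tilde\gamma^{1}_{s},\tilde\gamma^{2}_{s})\notin R$, and finally that each $\tilde\gamma^{i}$ is non-constant, its endpoints being at distance $(1-t_{1})\ell>0$, the two curves distinct because $\tilde\gamma^{1}_{1}=z\ne w'=\tilde\gamma^{2}_{1}$. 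Renaming $\tilde\gamma^{i}$ as $\gamma^{i}$ yields the statement.
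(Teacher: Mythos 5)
Your argument is correct and follows essentially the same route as the paper: normalize the second geodesic so that $\f^{d}$ agrees with its value along the first at matching times (your $w'$ plays exactly the role of the paper's $\gamma^{2}_{s_{2}}$), observe that equal potential together with distinctness of the points rules out membership in $R$, and then restrict and rescale to a sub-interval on which the two curves never coincide. Your truncation at $\max\{s : \gamma^{1}_{s}=\gamma^{2}_{s}\}$ is merely a slightly more explicit (and global) version of the paper's ``by continuity there exists $\delta>0$'' step near the far endpoints; the substance is identical.
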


\begin{proof}
Since $z,w \in \Gamma(x)$, from Lemma \ref{L:cicli} there exist two geodesics $\gamma^{1},\gamma^{2} \in G$ such that 
\[
\gamma^{1}_{0} = \gamma^{2}_{0} = x, \quad \gamma^{1}_{1} = z, \quad \gamma^{2}_{1} = w.
\]
Since $(z,w) \notin R$, necessarily both $z$ and $w$ are different from $x$ and $x$ is not a final point, that is $x \notin b$. So the previous geodesics are not constant.
Since $z$ and $w$ can be exchanged, we can also assume that $\f^{d}(z) \geq \f^{d}(w)$.
Since $z \in \Gamma(x)$, $\f^{d}(x) \geq \f^{d}(z)$ and by continuity there exists 
$s_{2} \in (0,1]$ such that 
\[
\f^{d}(z) = \f^{d}(\gamma^{2}_{s_{2}}).
\]
Note that $z \neq \gamma^{2}_{s_{2}}$, otherwise $w \in \Gamma(z)$ and therefore $(z,w) \in \R$.
Moreover still $(z,\gamma^{2}_{s_{2}}) \notin R$. Indeed if the contrary was true, then 
\[
0= |\f^{d}(z) - \f^{d}(\gamma^{2}_{s_{2}}) | = d(z,\gamma^{2}_{s_{2}}),
\]
that is a contradiction with $z \neq \gamma^{2}_{s_{2}}$.

So by continuity there exists $\delta > 0$ such that 
\[
\f^{d} (\gamma^{1}_{1-s} ) = \f^{d} (\gamma^{2}_{s_{2}(1-s)} ), \qquad d(\gamma^{1}_{1-s}, \gamma^{2}_{s_{2}-s}) > 0,
\]
for all $0 \leq s \leq \delta$. 

Hence reapplying the previous argument $(\gamma^{1}_{1-s}, \gamma^{2}_{s_{2}(1-s)}) \notin R$.
The curve $\gamma^{1}$ and $\gamma^{2}$ of the claim are then obtained properly restricting and rescaling the geodesic $\gamma^{1}$ and $\gamma^{2}$ considered so far.
\end{proof}

There is a measurable correspondence between points of branching and couples of geodesics.
To prove it we need the following selection result, Theorem 5.5.2 of \cite{Sri:courseborel}, page 198.

\begin{theorem}
\label{T:vanneuma}
Let $X$ and $Y$ be Polish spaces, $F \subset X \times Y$ analytic, and $\mathcal{A}$ the $\sigma$-algebra generated by the analytic subsets of X. Then there is an $\mathcal{A}$-measurable section $u : P_1(F) \to Y$ of $F$.
\end{theorem}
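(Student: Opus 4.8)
\emph{Proof sketch (the Jankov--von Neumann uniformization theorem).} The plan is to produce the section by the leftmost-branch (greedy) selection along a continuous parametrization of $F$, and then to verify measurability by exploiting the stability of the analytic class under continuous images. We may assume $F \neq \emptyset$, since otherwise $P_1(F) = \emptyset$ and there is nothing to prove.

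First, since $F$ is a nonempty analytic subset of the Polish space $X \times Y$, fix a continuous surjection $g = (g_1, g_2) : \n^{\n} \to F$, where $\n^{\n}$ carries the Baire space topology and $g_i$ is the composition of $g$ with the projection onto the $i$-th factor; thus $P_1(F) = g_1(\n^{\n})$, which is analytic, hence in $\mathcal{A}$. For each $x \in P_1(F)$ the fibre $g_1^{-1}(\{x\})$ is a nonempty closed subset of $\n^{\n}$, so it is exactly the set of infinite branches of the tree
\[
T_x := \{\, s \in \n^{<\n} : N_s \cap g_1^{-1}(\{x\}) \neq \emptyset \,\},
\]
where $N_s$ denotes the basic clopen set determined by the finite sequence $s$; this tree is pruned because $g_1^{-1}(\{x\})$ is nonempty and closed. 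Let $\alpha_x \in \n^{\n}$ be the leftmost branch of $T_x$, defined recursively so that $\alpha_x(i)$ is the least $m \in \n$ with $(\alpha_x \upharpoonright i)^{\frown} m \in T_x$ (well defined since $T_x$ is pruned), and set $u(x) := g_2(\alpha_x)$. Then $g(\alpha_x) \in F$ and $g_1(\alpha_x) = x$, so $u$ is a section of $F$ defined on all of $P_1(F)$.

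Second, one checks that $u$ is $\mathcal{A}$-measurable, which is where the actual work lies. It suffices to show $u^{-1}(V) \in \mathcal{A}$ for every open $V \subseteq Y$. Since $g_2$ is continuous, $g_2^{-1}(V)$ is a countable union of basic clopen sets $N_t$, $t \in \n^{<\n}$, and $u^{-1}(V) = \{\, x \in P_1(F) : \alpha_x \in g_2^{-1}(V) \,\}$, so it is enough to show that $\{\, x : \alpha_x \in N_t \,\} \in \mathcal{A}$ for each $t$. Because $T_x$ is a tree closed under initial segments and is pruned, the greedy definition of $\alpha_x$ yields the combinatorial identity
\[
\{\, x : \alpha_x \in N_t \,\} \;=\; \{\, x : t \in T_x \,\} \;\cap\; \bigcap_{i < |t|}\ \bigcap_{m < t(i)} \{\, x : (t\upharpoonright i)^{\frown} m \notin T_x \,\}.
\]
Now $\{\, x : s \in T_x \,\} = g_1(N_s)$, the continuous image of the Polish space $N_s$, hence analytic and therefore a member of $\mathcal{A}$; and $\{\, x : s \notin T_x \,\}$ lies in $\mathcal{A}$ because $\mathcal{A}$ is a $\sigma$-algebra containing all analytic sets. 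The right-hand side above is thus a finite intersection of members of $\mathcal{A}$, so $u^{-1}(V)$ is a countable union of such sets and belongs to $\mathcal{A}$. This establishes $\mathcal{A}$-measurability of $u$ and completes the argument.

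The main obstacle is this second, measurability step: the delicate point is that the leftmost-branch selection admits a description by \emph{finitely many} analytic and co-analytic conditions, which is only true because the fibre trees $T_x$ are pruned (so that a node belonging to $T_x$ automatically lies on a branch). Granting that, everything reduces to the two standard closure properties used above — analytic sets are closed under continuous images, and $\mathcal{A} = \sigma(\mathbf{\Sigma}^1_1)$ contains all co-analytic sets — and the remainder is bookkeeping.
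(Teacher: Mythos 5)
Your argument is correct: it is the standard proof of the Jankov--von Neumann uniformization theorem (leftmost-branch selection along a continuous parametrization of the analytic set by Baire space), and the key measurability step --- expressing $\{x:\alpha_x\in N_t\}$ as a finite Boolean combination of the analytic sets $g_1(N_s)$, which works precisely because the fibre trees are pruned --- is handled properly. The paper itself offers no proof of this statement, quoting it verbatim as Theorem 5.5.2 of Srivastava's \emph{A course on Borel sets}, so there is nothing to compare beyond noting that your proof is the one found in the cited reference.
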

Recall that given $F \subset X \times Y$, a \emph{section $u$ of $F$} is a function from $P_1(F)$ to $Y$ such that $\textrm{graph}(u) \subset F$. Here $\mathcal{A}$ denotes the $\sigma$-algebra generated by the analytic subsets of $(X,d)$.

\begin{lemma}\label{L:selectiongeo}
Consider the set of possible branching points defined as follows
\[
A_{+} : = \{ x \in \mathcal{T}_{e} : \exists z,w \in \Gamma(x), (z,w) \notin R \}.
\]
Then there exists 
an $m$-measurable map $u : A_{+} \mapsto G \times G$ such 
that if $u(x) = (\gamma^{1},\gamma^{2})$ then
\begin{itemize}
\item[-] $(x,\gamma_{s}^{1}), (x,\gamma_{s}^{2}) \in \Gamma$ for all $s \in [0,1]$;
\item[-] $(\gamma_{s}^{1},\gamma^{2}_{s}) \notin R$ for all $s \in [0,1]$; 
\item[-] $\f^{d}(\gamma^{1}_{s}) = \f^{d}(\gamma^{2}_{s})$ for all $s \in [0,1]$.
\end{itemize}
Moreover both geodesics are non-constant.
\end{lemma}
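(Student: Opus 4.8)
The plan is to realise $u$ as a measurable selection, through Theorem~\ref{T:vanneuma}, of a set whose fibre over a point $x$ is exactly the family of pairs of geodesics admissible at $x$. First I would record that all the spaces in play are Polish: $X$ is proper, hence Polish; $\Geo(X)$ is closed in $C([0,1],X)$, hence Polish; and $G$, being closed in $\Geo(X)$, is Polish, so that $G\times G$ is Polish. Then I would introduce the set $F\subseteq X\times(G\times G)$ consisting of all triples $(x,\gamma^{1},\gamma^{2})$ such that $\gamma^{1}$ and $\gamma^{2}$ are non-constant and, for every $s\in[0,1]$,
\[
(x,\gamma^{1}_{s})\in\Gamma,\qquad (x,\gamma^{2}_{s})\in\Gamma,\qquad (\gamma^{1}_{s},\gamma^{2}_{s})\notin R,\qquad \f^{d}(\gamma^{1}_{s})=\f^{d}(\gamma^{2}_{s}).
\]

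The first step is to check that $P_{1}(F)=A_{+}$. The inclusion $\subseteq$ is immediate: if $(x,\gamma^{1},\gamma^{2})\in F$ then $z:=\gamma^{1}_{1}$ and $w:=\gamma^{2}_{1}$ lie in $\Gamma(x)$ and satisfy $(z,w)\notin R$, so $x\in A_{+}$. For $\supseteq$, given $x\in A_{+}$ choose $z,w\in\Gamma(x)$ with $(z,w)\notin R$; since $(x,z),(x,w)\in\Gamma\subseteq R$, neither $z$ nor $w$ can equal $x$, so $z,w\in\mathcal{T}_{e}$, and Lemma~\ref{L:geoingamma} supplies a pair $(\gamma^{1},\gamma^{2})$ meeting precisely the defining conditions of $F$; hence $x\in P_{1}(F)$. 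The second step is to verify that $F$ is Borel (in particular analytic). Using the joint continuity of the evaluation map $(\gamma,s)\mapsto\gamma_{s}$ on $\Geo(X)\times[0,1]$, the closedness of $\Gamma$ and $R$, the continuity of $\f^{d}$, and the closedness of $G$ in $\Geo(X)$: for each fixed $s$ the conditions $(x,\gamma^{i}_{s})\in\Gamma$ and $\f^{d}(\gamma^{1}_{s})=\f^{d}(\gamma^{2}_{s})$ cut out closed sets, so intersecting them over all $s\in[0,1]$ still yields closed sets; the condition ``$(\gamma^{1}_{s},\gamma^{2}_{s})\notin R$ for every $s$'' has complement equal to the projection to $X\times G\times G$ of the closed set $\{(x,\gamma^{1},\gamma^{2},s):(\gamma^{1}_{s},\gamma^{2}_{s})\in R\}$, and this projection is closed because $[0,1]$ is compact; finally the two non-constancy conditions are open. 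Thus $F$ is Borel.

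The last step is to feed $F$ into Theorem~\ref{T:vanneuma} with ambient Polish space $X$ and target Polish space $G\times G$: this produces an $\mathcal{A}$-measurable section $u:P_{1}(F)=A_{+}\to G\times G$ with $\gra(u)\subseteq F$, which is exactly the asserted map (note that $u(x)=(\gamma^{1},\gamma^{2})\in F$ automatically forces $\gamma^{1}\neq\gamma^{2}$, since $(\gamma^{1}_{s},\gamma^{2}_{s})\notin R$ precludes $\gamma^{1}\equiv\gamma^{2}$). Since analytic subsets of a Polish space are universally measurable, $\mathcal{A}$ is contained in the $m$-completion of $\mathcal{B}(X)$, so $u$ is $m$-measurable; by the same token $A_{+}=P_{1}(F)$ is $m$-measurable. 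The one genuinely delicate point in this scheme is the measurability of $F$ — specifically, handling the universal quantifier over $s\in[0,1]$ in the three ``for all $s$'' clauses — and it is precisely there that compactness of $[0,1]$, together with the joint continuity of the evaluation map, is used; the remainder is bookkeeping and an invocation of the selection theorem.
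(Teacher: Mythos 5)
Your proposal is correct and follows essentially the same route as the paper: build a set $F$ of admissible triples $(x,\gamma^{1},\gamma^{2})$, use Lemma~\ref{L:geoingamma} to see that its fibers over $A_{+}$ are nonempty, and apply the selection Theorem~\ref{T:vanneuma}, with $m$-measurability coming from universal measurability of analytic sets. The only (harmless) difference is that the paper encodes the constraints of $F$ through conditions at $s=0,1$ only, letting membership in $G$ and the affinity of $\f^{d}$ along geodesics of $G$ propagate them to all $s$, whereas you impose the ``for all $s$'' conditions directly and dispose of the universal quantifier via compactness of $[0,1]$ — arguably a more robust bookkeeping of the Borel structure of $F$.
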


\begin{proof}
Since $G = \{ \gamma \in \Geo(X) : (\gamma_{0},\gamma_{1}) \in \Gamma \}$,
and that $\Gamma \subset X \times X$ is closed, the set $G$ is a complete and separable metric space.
Consider now the set 
\begin{align*}
F:= &~ \{ (x,\gamma^{1},\gamma^{2}) \in \mathcal{T}_{e}\times G \times G : (x,\gamma^{1}_{0}), (x,\gamma^{2}_{0}) \in \Gamma \} \crcr
&~ \cap\left( X\times \{ (\gamma^{1},\gamma^{2}) \in G\times G : d(\gamma^{1}_{1},\gamma^{2}_{1})>0 \} \right) \crcr
&~ \cap\left( X\times \{ (\gamma^{1},\gamma^{2}) \in G\times G : d(\gamma^{1}_{0},\gamma^{2}_{0})>0 \} \right) \crcr
&~ \cap\left( X\times \{ (\gamma^{1},\gamma^{2}) \in G\times G : \f^{d}(\gamma^{1}_{i}) = \f^{d}(\gamma^{2}_{i}), \, i =0,1 \} \right).
\end{align*}
It follows from Remark \ref{R:regularity} that $F$ is $\sigma$-compact and from Lemma \ref{L:geoingamma}, 
\[
F \cap  \left( \{x\} \times G\times G \right) \neq \emptyset
\]
for all $x \in A_{+}$. Theorem \ref{T:vanneuma} infer the existence of an $\mathcal{A}$-measurable selection $u$ of $F$.
Then since $A_{+} = P_{1}(F)$ and in particular $S$ is $m$-measurable, the claim follows.
\end{proof}

Note that in the proof of Lemma \ref{L:selectiongeo} we have also shown that $A_{+}$ is a $\sigma$-compact set.

We recall here the crucial construction, already introduced in \cite{cav:decomposition}, that permits to apply the known results on the structure 
of $d^{2}$-cyclically monotone sets to $d$-cyclically monotone one.

\begin{lemma}\label{L:12monotone}
Let $\Delta \subset \Gamma$ be any set so that: 
\[
(x_{0},y_{0}), (x_{1},y_{1}) \in \Delta \quad \Rightarrow \quad (\f^{d}(y_{1}) - \f^{d}(y_{0}) )\cdot (\f^{d}(x_{1}) - \f^{d}(x_{0}) ) \geq 0.
\]
Then $\Delta$ is $d^{2}$-cyclically monotone.
\end{lemma}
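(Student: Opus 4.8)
The plan is to verify the defining inequality of $d^2$-cyclic monotonicity directly: given $(x_1,y_1),\dots,(x_n,y_n)\in\Delta$ and a permutation $\sigma$, we must show
\[
\sum_{i=1}^n d(x_i,y_i)^2 \le \sum_{i=1}^n d(x_i,y_{\sigma(i)})^2 .
\]
The key observation is that on $\Gamma$ the cost $d(x,y)$ is controlled by the potential: $(x,y)\in\Gamma$ gives $d(x,y)=\varphi^d(x)-\varphi^d(y)$, while $1$-Lipschitz continuity of $\varphi^d$ gives, for \emph{any} pair $(x,y_{\sigma(i)})$, the bound $d(x_i,y_{\sigma(i)}) \ge |\varphi^d(x_i)-\varphi^d(y_{\sigma(i)})| \ge \varphi^d(x_i)-\varphi^d(y_{\sigma(i)})$. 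Writing $p_i := \varphi^d(x_i)$ and $q_i := \varphi^d(y_i)$, we have $d(x_i,y_i)=p_i-q_i\ge 0$ and $d(x_i,y_{\sigma(i)}) \ge p_i - q_{\sigma(i)}$. Hence it suffices to prove the one-dimensional inequality
\[
\sum_{i=1}^n (p_i-q_i)^2 \le \sum_{i=1}^n \bigl( (p_i - q_{\sigma(i)})^{+}\bigr)^2,
\]
for real numbers subject to the sign hypothesis of the lemma.

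First I would record what the hypothesis on $\Delta$ says about these numbers: for any two indices $i,j$, $(q_j-q_i)(p_j-p_i)\ge 0$, i.e. the map $p_i \mapsto q_i$ is monotone non-decreasing. After relabelling we may assume $p_1\le p_2\le\cdots\le p_n$, and then $q_1\le q_2\le\cdots\le q_n$ as well. Now the classical rearrangement argument applies: for sorted sequences, the sum $\sum_i p_i q_{\tau(i)}$ is maximized over permutations $\tau$ by the identity, so $\sum_i p_i q_i \ge \sum_i p_i q_{\sigma(i)}$ for every $\sigma$. Therefore
\[
\sum_i (p_i-q_i)^2 = \sum_i p_i^2 - 2\sum_i p_i q_i + \sum_i q_i^2 \le \sum_i p_i^2 - 2\sum_i p_i q_{\sigma(i)} + \sum_i q_i^2 = \sum_i (p_i - q_{\sigma(i)})^2,
\]
using that $\sigma$ permutes the $q$'s so $\sum_i q_{\sigma(i)}^2 = \sum_i q_i^2$. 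Combining with $(p_i-q_{\sigma(i)})^2 \ge \bigl((p_i-q_{\sigma(i)})^{+}\bigr)^2 \ge d(x_i,y_{\sigma(i)})^2$ — wait, the last inequality goes the wrong way, so I would instead not pass through the positive part at all: simply use $d(x_i,y_{\sigma(i)})^2 \ge (p_i-q_{\sigma(i)})^2$ directly from $d(x_i,y_{\sigma(i)}) \ge |p_i - q_{\sigma(i)}|$, which holds by $1$-Lipschitz continuity of $\varphi^d$. Chaining the inequalities then yields $\sum_i d(x_i,y_i)^2 = \sum_i (p_i-q_i)^2 \le \sum_i(p_i-q_{\sigma(i)})^2 \le \sum_i d(x_i,y_{\sigma(i)})^2$, which is exactly $d^2$-cyclic monotonicity of $\Delta$.

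The only genuinely delicate point is the reduction to the sorted one-dimensional picture: one must be careful that the hypothesis of the lemma controls \emph{all} pairs in $\Delta$ simultaneously and hence really does force global monotonicity of $i\mapsto q_i$ as a function of $p_i$ (including the degenerate cases $p_i=p_j$, where the hypothesis forces nothing but also costs nothing since ties can be broken arbitrarily). Everything else is the standard rearrangement inequality plus the Lipschitz estimate for the potential, so I expect no further obstacle.
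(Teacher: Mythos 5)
Your proof is correct and follows essentially the same route as the paper: reduce to the one-dimensional picture via $d(x_i,y_i)=\f^{d}(x_i)-\f^{d}(y_i)$ on $\Gamma$ and $d(x_i,y_{\sigma(i)})\geq|\f^{d}(x_i)-\f^{d}(y_{\sigma(i)})|$ from the $1$-Lipschitz bound, then use cyclic monotonicity of the monotone set $\{(\f^{d}(x),\f^{d}(y)):(x,y)\in\Delta\}\subset\R^2$ for the quadratic cost. The paper simply asserts this last fact, whereas you spell it out via the rearrangement inequality (including the harmless tie-breaking); that is the only difference.
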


\begin{proof} It follows directly from the hypothesis of the Lemma that the set
\[
\{ (\f^{d}(x), \f^{d}(y) ) :   (x,y) \in \Delta \} \subset \erre \times \erre
\]
is $|\cdot|^{2}$-cyclically monotone, where $|\cdot|$ denotes the modulus. 
Then for $\{(x_{i},y_{i})\}_{ i \leq N} \subset \Delta$, since $\Delta \subset \Gamma$,
it holds
\begin{align*} 
\sum_{i=1}^{N} d^{2}(x_{i},y_{i}) = &~ \sum_{i =1}^{N}|\f^{d}(x_{i}) - \f^{d}(y_{i})|^{2} \crcr
\leq&~ \sum_{i =1}^{N}|\f^{d}(x_{i}) - \f^{d}(y_{i+1})|^{2} \crcr
\leq &~ \sum_{i=1}^{N} d^{2}(x_{i},y_{i+1}),
\end{align*}
where the last inequality is given by the 1-Lipschitz regularity of $\f^{d}$. The claim follows.
\end{proof}

The first consequence of Lemma \ref{L:12monotone} is the following

\begin{proposition}\label{P:nobranch}
The set 
\[
A_{+} := \{ x \in \mathcal{T}_{e} : \exists z,w \in \Gamma(x), (z,w) \notin R \}
\]
has $m$-measure zero.
\end{proposition}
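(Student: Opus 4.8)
The plan is to argue by contradiction, assuming $m(A_{+}) > 0$, and then to use the measurable selection of branching geodesics (Lemma \ref{L:selectiongeo}) together with the localization/entropy estimate (Proposition \ref{P:localization}) to derive an impossibility. The key tension to exploit is that $\RCD(K,N)$ forces $d^2$-optimal dynamical plans between absolutely continuous measures to be induced by non-branching geodesics, whereas $A_+$ by definition produces, along a positive-measure set of starting points, two genuinely distinct geodesics $\gamma^1(x), \gamma^2(x)$ that agree at $x$, stay $\varphi^d$-synchronized, and are never $R$-related. The reason Lemma \ref{L:12monotone} is invoked is precisely to convert these pairs of $d$-geodesics into a $d^2$-cyclically monotone configuration, so that the $d^2$-theory (uniqueness and non-branching of optimal plans) can be brought to bear.

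Concretely, first I would start from the $m$-measurable map $u : A_+ \to G \times G$, $u(x) = (\gamma^1, \gamma^2)$, from Lemma \ref{L:selectiongeo}, and restrict to a compact subset $K \subset A_+$ with $m(K) > 0$ on which $x \mapsto u(x)$ is continuous (Lusin), and along which the data are uniformly non-degenerate (lengths of $\gamma^1,\gamma^2$ bounded below, the separations $d(\gamma^1_0,\gamma^2_0)$ and $d(\gamma^1_1,\gamma^2_1)$ bounded below). Shrinking further, I can assume that on $K$ the quantity $\varphi^d$ takes values in a small interval, so that the ordering condition of Lemma \ref{L:12monotone} holds for the set $\Delta := \{(x,\gamma^1_1(x)) : x \in K\} \cup \{(x,\gamma^2_1(x)) : x \in K\}$; more carefully, I would pick $K$ so that $\varphi^d$ is \emph{constant} enough (or monotone) along the relevant directions — since $\varphi^d(\gamma^1_s(x)) = \varphi^d(\gamma^2_s(x))$ for all $s$ by the selection, the two endpoints $\gamma^1_1(x)$ and $\gamma^2_1(x)$ have the same $\varphi^d$-value, which makes verifying the sign condition of Lemma \ref{L:12monotone} among these pairs essentially automatic. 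Hence $\Delta$ is $d^2$-cyclically monotone.

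Next I would build two probability measures: let $\mu_0 := \frac{1}{m(K)} m\llcorner_K$ (absolutely continuous, and finite-energy after a further truncation so it lies in $\mathcal{P}_2$), and let $\mu_1 := \frac12 (\gamma^1_1)_\sharp \mu_0 + \frac12 (\gamma^2_1)_\sharp \mu_0$. The plan $\pi$ that sends $x$ to $\gamma^1_1(x)$ with weight $1/2$ and to $\gamma^2_1(x)$ with weight $1/2$ is supported in $\Delta$, hence is $d^2$-optimal between $\mu_0$ and $\mu_1$; lifting it, the obvious dynamical plan $\nu \in \mathcal{P}(\Geo(X))$ obtained from $\gamma^1,\gamma^2$ lies in $\Opt(\mu_0,\mu_1)$. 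Now apply Theorem \ref{T:existenceinfinity} (via $\RCD(K,\infty)$, which $\RCD(K,N)$ implies): the optimal dynamical plan is unique and induced by a map, i.e. concentrated on a set of non-branching geodesics, and $e_0$ is injective on its support. But $\nu$ puts positive mass on pairs $(\gamma^1(x),\gamma^2(x))$ with $\gamma^1_0(x) = \gamma^2_0(x) = x$ and $\gamma^1 \neq \gamma^2$ — these geodesics branch at their common initial point (or, if one prefers to avoid the degenerate ``branching exactly at the endpoint'' subtlety, one restricts $\gamma^1,\gamma^2$ slightly past $x$ using the interval $[1-\delta,1]$ produced in Lemma \ref{L:geoingamma}, where they genuinely separate in the interior). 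This contradicts injectivity of $e_0$, equivalently the non-branching conclusion. Alternatively, and perhaps more cleanly, one uses Proposition \ref{P:localization}: since by Theorem \ref{T:existenceinfinity} $\mu_t \ll m$ with a density satisfying the convexity inequality, and $\mu_0 = (e_0)_\sharp \nu$ would have to be obtained from a non-branching configuration, the two-to-one structure of $\nu$ at $t=0$ is incompatible with $\mu_0 \ll m$ having a well-defined density along rays. Either way we reach a contradiction, so $m(A_+) = 0$.

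The main obstacle I anticipate is the bookkeeping needed to make $\Delta$ honestly $d^2$-cyclically monotone after the measurable selection: the sign condition in Lemma \ref{L:12monotone} is a \emph{global} condition on $\Delta$, so one genuinely has to localize $K$ enough that all pairs $(x,\gamma^i_1(x))$, $(x',\gamma^j_1(x'))$ with $x,x' \in K$ satisfy it — this is where I would spend the most care, probably by choosing $K$ inside a level set $\{\varphi^d \in [c, c+\epsilon]\}$ and controlling how much $\varphi^d$ varies along the selected geodesics, using their uniformly bounded length on $K$. A secondary technical point is ensuring $\mu_0, \mu_1 \in \mathcal{P}_2(X)$ (properness of $X$ under $\RCD(K,N)$, $N<\infty$, plus the compactness of $K$, handles this) and that the lifted plan is \emph{the} optimal one so that Theorem \ref{T:existenceinfinity}'s uniqueness actually applies — this follows because $d^2$-cyclical monotonicity of the support characterizes $W_2$-optimality. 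Once these are in place, the contradiction with non-branching is immediate.
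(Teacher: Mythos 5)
Your overall strategy is the paper's: contradiction, measurable selection of the two branching geodesics, Lemma \ref{L:12monotone} to upgrade to $d^{2}$-cyclic monotonicity, and the $\RCD$ map-existence theory to rule out a mass-splitting optimal plan. But there is a genuine gap exactly at the step you yourself flag as the main obstacle, and your proposed fix does not close it. The set $\Delta=\{(x,\gamma^{1}_{1}(x)):x\in K\}\cup\{(x,\gamma^{2}_{1}(x)):x\in K\}$ is \emph{not} made $d^{2}$-cyclically monotone by shrinking $K$ into a thin level set $\{\f^{d}\in[c,c+\epsilon]\}$: for $x\neq x'$ the sign condition requires $\bigl(\f^{d}(\gamma^{i}_{1}(x))-\f^{d}(\gamma^{j}_{1}(x'))\bigr)\bigl(\f^{d}(x)-\f^{d}(x')\bigr)\geq 0$, and the $\f^{d}$-values of the \emph{endpoints} are $\f^{d}(\gamma^{i}_{0}(x))$ minus the length of $\gamma^{i}(x)$; these lengths vary with $x$ and are in no way controlled by making $\f^{d}(x)$ nearly constant on $K$, so two base points with $\f^{d}(x)>\f^{d}(x')$ can perfectly well have $\f^{d}(\gamma^{1}_{1}(x))<\f^{d}(\gamma^{1}_{1}(x'))$. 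The identity $\f^{d}(\gamma^{1}_{s})=\f^{d}(\gamma^{2}_{s})$ only handles the pair with $x=x'$. The missing idea is not to transport $x$ to the endpoints at all: the paper first localizes to a set $B$ on which $\sup_{x\in B}\f^{d}(\gamma^{1}_{1})<\inf_{x\in B}\f^{d}(\gamma^{1}_{0})$, so that a fixed interval $I$ of $\f^{d}$-values is swept by \emph{every} selected geodesic, and then sends $x$ to the points $\gamma^{i}_{s(x)}$ sitting at the prescribed level $\f^{d}(\gamma^{i}_{s(x)})=T(\f^{d}(x))$ for one fixed monotone $T:\R\to\R$ with $T(\f^{d}(B))=I$. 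With that choice $\f^{d}(y_{1})-\f^{d}(y_{0})=T(\f^{d}(x_{1}))-T(\f^{d}(x_{0}))$ and the sign condition of Lemma \ref{L:12monotone} holds by construction. Without some such synchronization of the targets' $\f^{d}$-levels to the sources' $\f^{d}$-levels, the argument does not go through.

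A secondary but real issue: Theorem \ref{T:existenceinfinity} assumes \emph{both} marginals absolutely continuous, and your $\mu_{1}=\tfrac12(\gamma^{1}_{1})_{\sharp}\mu_{0}+\tfrac12(\gamma^{2}_{1})_{\sharp}\mu_{0}$ is not known to be. The correct tool (and the one the paper uses) is the result of Gigli--Rajala--Sturm recalled in Section \ref{S:rcd}: in $\RCD(K,N)$ with $N<\infty$, an optimal plan whose \emph{first} marginal is absolutely continuous is unique and graph-supported. Once $\Delta$ is honestly $d^{2}$-cyclically monotone, the contradiction is simply that the plan $\tfrac12(Id,T^{1})_{\sharp}m_{B}+\tfrac12(Id,T^{2})_{\sharp}m_{B}$ splits mass (its two targets are distinct because $(\gamma^{1}_{s},\gamma^{2}_{s})\notin R$), so it cannot be induced by a map; no dynamical lift or injectivity of $e_{0}$ is needed, which also sidesteps your worry about the selected geodesics actually emanating from $x$ (they need not: the selection only guarantees $(x,\gamma^{i}_{s})\in\Gamma$, not $\gamma^{i}_{0}=x$).
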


\begin{proof}
{\it Step 1.} Suppose by contradiction that $m(A_{+})>0$. 
By definition of $A_{+}$, thanks to Lemma \ref{L:geoingamma} and Lemma \ref{L:selectiongeo}, 
for every $x \in A_{+}$ there exist two non-constant geodesics $\gamma^{1},\gamma^{2} \in \G(X)$ 
such that 
\begin{itemize}
\item[-] $\gamma^{1}, \gamma^{2} \in G$; 
\item[-] $(x,\gamma_{s}^{1}), (x,\gamma_{s}^{2}) \in \Gamma$ for all $s \in [0,1]$;
\item[-] $(\gamma_{s}^{1},\gamma^{2}_{s}) \notin R$ for all $s \in [0,1]$; 
\item[-] $\f^{d}(\gamma^{1}_{s}) = \f^{d}(\gamma^{2}_{s})$ for all $s \in [0,1]$.
\end{itemize}
Moreover the map $A_{+} \ni x \mapsto u(x) : = (\gamma^{1},\gamma^{2}) \in G^{2}$ is $m$-measurable.

Then again by inner regularity of compact sets, we can assume that the previous map is continuous and in particular the functions
\[
A_{+} \ni x \mapsto \f^{d}(\gamma^{i}_{j}) \in \R, \qquad  i =1,2, \ j = 0,1
\]
are all continuous. Put $\alpha_{x} = \f^{d}(\gamma^{1}_{0})$ and $\beta_{x} = \f^{d}(\gamma^{1}_{1})$ and note that 
$\alpha_{x} > \beta_{x}$.
Now we want to show the existence of a subset $B \subset A_{+}$, still with $m(B) > 0$, such that 
\[
\sup_{x \in B} \beta_{x} < \inf_{x\in B} \alpha_{x}.
\]
By continuity of $\alpha$ and $\beta$, a set $B$ verifying the previous inequality
can be obtained considering the set $A_{+} \cap B_{r}(x)$, for $x \in A_{+}$ and for $r$ sufficiently small.
Since $m(A_{+})>0$, for $m$-a.e. $x \in A_{+}$ the set $A_{+}\cap B_{r}(x)$ has positive $m$-measure.
So the existence of $B \subset A_{+}$ enjoying the aforementioned properties follows.

{\it Step 2.} Let $I = [c,d]$ be a non trivial interval such that 
\[
\sup_{x \in B} \beta_{x} < c < d <\inf_{x\in B} \alpha_{x}. 
\]
Then by construction for all $x \in B$ the image of the composition of the geodesics $\gamma^{1}$ and $\gamma^{2}$ with $\f^{d}$
contains the interval $I$: 
\[
I \subset \{ \f^{d}(\gamma^{i}_{s}) : s \in [0,1] \}, \qquad i = 1,2.
\] 
Now let $T : \R \to \R$ be a monotone map such that $T (\f^{d}(B)) = I$. 
Then we can consider the following function: to each $x \in B$ we associate $s(x) \in [0,1]$ such that 
\[
\f^{d} (\gamma^{1}_{s(x)}) = T(\f^{d}(x)).
\]
Note that the map $s \mapsto \f^{d} (\gamma^{1}_{s})$ is strictly decreasing and 
\[
T(\f^{d}(x)) \in I \subset \{ \f^{d}(\gamma^{1}_{s}) : s \in [0,1] \},
\]
therefore the function $s(x)$ is well defined and by construction $\f^{d}(\gamma^{2}_{s(x)}) =  \f^{d}(\gamma^{1}_{s(x)}) = T(\f^{d}(x))$.

We can now define on $B$ two transport maps $T^{1}$ and $T^{2}$ by
\[
B \ni x \mapsto T^{i}(x) : = \gamma^{i}_{s(x)}, \qquad i =1,2.
\]
Accordingly we define the transport plan
\[
\eta : = \frac{1}{2} \left(   (Id, T^{1})_{\sharp} m_{B} + (Id, T^{2})_{\sharp} m_{B}   \right),
\]
where $m_{B} : = m(B)^{-1} m \llcorner_{B}$.

{\it Step 3.} The support of $\eta$ is $d^{2}$-cyclically monotone. To prove it we will use Lemma \ref{L:12monotone}. 
The measure $\eta$ is concentrated on the set
\[
\Delta : = \{ (x,\gamma^{1}_{s(x)}) : x \in B \} \cup  \{ (x,\gamma^{2}_{s(x)}) : x \in B \} \subset \Gamma.
\]
Possibly restricting again the set $B$, we can assume $T^{1}$ and $T^{2}$ to be continuous and therefore 
$\Delta$ to be the support of $\eta$.
Then take any two couples $(x_{0},y_{0}), (x_{1},y_{1}) \in \Delta$ and by definition of $T$:
\[
\f^{d}(y_{1}) - \f^{d}(y_{0}) =T(\f^{d}(x_{1})) - T(\f^{d}(x_{0})).
\]
Since $T$ is monotone it follows that $\left( T(\f^{d}(x_{1})) - T(\f^{d}(x_{0})) \right) \left( \f^{d}(x_{1}) - \f^{d}(x_{0}) \right) \geq 0$
and Lemma \ref{L:12monotone} can be applied to $\Delta$. 
Hence $\Delta$ is $d^{2}$-monotone. Hence $\eta$ is optimal with $(P_{1})_{\sharp}\eta \ll m$ and this is a contradiction with 
the curvature property $\RCD(K,N)$ that implies that every optimal transportation is induced by a map.
The claim follows.
\end{proof}

Thanks to the symmetry of the statement of Proposition \ref{P:nobranch}, it can be proven that also the set 
\[
A_{-}: = \{ x \in \mathcal{T}_{e} : \exists z,w \in \Gamma(x)^{-1}, (z,w) \notin R \}
\]
has $m$-measure zero. Regarding measurability, also $A_{-}$ is a $\sigma$-compact set. 
Being the difference of two $\sigma$-compact sets, the set $\mathcal{T}_{e} \setminus \left( A_{+} \cup A_{-} \right)$
is $\sigma$-compact as well.

The next proposition clarifies the importance of absence of branching geodesic in the sense of Proposition \ref{P:nobranch}.

\begin{theorem}\label{T:equivalence}
The set of transport rays $R\subset X \times X$ is an equivalence relation on the set
\[
\mathcal{T}_{e} \setminus \left( A_{+} \cup A_{-} \right).
\]
\end{theorem}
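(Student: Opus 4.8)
The plan is to verify the three defining properties of an equivalence relation for $R = \Gamma \cup \Gamma^{-1}$ restricted to $\mathcal{T}:=\mathcal{T}_{e}\setminus(A_{+}\cup A_{-})$, namely reflexivity, symmetry, and transitivity. Reflexivity on $\mathcal{T}_e$ (hence on $\mathcal{T}$) is immediate: if $x\in\mathcal{T}_{e}$ then $x$ lies on some non-trivial transport ray, so $(x,x)\in\Gamma\subset R$; one should note here that $\f^{d}(x)-\f^{d}(x)=0=d(x,x)$. Symmetry is built into the definition $R=\Gamma\cup\Gamma^{-1}$, which is manifestly symmetric. So the entire content of the statement is transitivity, and this is exactly where the hypothesis $x\notin A_{+}\cup A_{-}$ enters through Proposition \ref{P:nobranch}.

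For transitivity I would argue as follows. Suppose $x,y,z\in\mathcal{T}$ with $(x,y)\in R$ and $(y,z)\in R$; the goal is $(x,z)\in R$. If any of the three points coincides with another, the conclusion follows from reflexivity, so assume they are pairwise distinct. Unwinding the definition $R=\Gamma\cup\Gamma^{-1}$, there are four cases according to the orientations of the two given pairs. Two of them are ``aligned'': e.g. $(x,y)\in\Gamma$ and $(y,z)\in\Gamma$, i.e. $\f^{d}(x)-\f^{d}(y)=d(x,y)$ and $\f^{d}(y)-\f^{d}(z)=d(y,z)$; adding these and using the triangle inequality $d(x,y)+d(y,z)\geq d(x,z)$ together with the $1$-Lipschitz bound $\f^{d}(x)-\f^{d}(z)\leq d(x,z)$ forces equality, so $(x,z)\in\Gamma\subset R$. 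The symmetric aligned case $(y,x),(z,y)\in\Gamma$ is identical with the roles reversed. The genuinely interesting cases are the ``opposing'' ones: say $(x,y)\in\Gamma^{-1}$ and $(y,z)\in\Gamma$, meaning $y\in\Gamma(x)^{-1}$ wait — I should be careful: $(x,y)\in\Gamma^{-1}$ means $(y,x)\in\Gamma$, so $x\in\Gamma(y)$; combined with $(y,z)\in\Gamma$, i.e. $z\in\Gamma(y)$, we have both $x,z\in\Gamma(y)$. Since $y\in\mathcal{T}$ and in particular $y\notin A_{+}$, the definition of $A_{+}$ gives that no pair of points in $\Gamma(y)$ fails to be in $R$; hence $(x,z)\in R$, which is the claim. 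The remaining opposing case $(x,y)\in\Gamma$, $(y,z)\in\Gamma^{-1}$ is the mirror image: then $y\in\Gamma(x)$ and $y\in\Gamma(z)$, so $x$ and $z$ both send a transport ray into $y$, i.e. $x,z\in\Gamma^{-1}(y)$ — wait, more precisely $(x,y),(z,y)\in\Gamma$ means $x,z\in\Gamma^{-1}(y)$, and since $y\notin A_{-}$, by the definition of $A_{-}$ every pair of points in $\Gamma(y)^{-1}=\Gamma^{-1}(y)$ lies in $R$, so again $(x,z)\in R$.

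The main (and only real) obstacle is bookkeeping: making sure the case analysis on the four orientation patterns is exhaustive and that in each of the two ``opposing'' cases one correctly identifies which of $A_{+}$, $A_{-}$ must be avoided at the middle point $y$ — the aligned cases need nothing beyond the triangle inequality and $1$-Lipschitz continuity of $\f^{d}$, while the opposing cases are precisely the no-branching statement of Proposition \ref{P:nobranch} applied at $y$. One should also record at the outset that reflexivity and symmetry are trivial, so that the reader sees the whole weight of the theorem resting on Proposition \ref{P:nobranch}. No further geometric input (geodesics, the selection lemmas) is needed here; those were already consumed in proving Proposition \ref{P:nobranch}.
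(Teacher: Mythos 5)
Your proposal is correct and follows essentially the same route as the paper's proof: reflexivity and symmetry are immediate from the definition of $R$, and transitivity is settled by the same four-case analysis on the orientations of the two pairs, the aligned cases following from the definition of $\Gamma$ (triangle inequality plus $1$-Lipschitz continuity of $\f^{d}$) and the opposing cases from the middle point avoiding $A_{+}$ (resp.\ $A_{-}$). The only cosmetic slip is attributing the opposing cases to Proposition \ref{P:nobranch} (which is the measure-zero statement); what you actually use, correctly, is just the definitions of $A_{+}$ and $A_{-}$ together with the hypothesis that the middle point lies outside them.
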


\begin{proof}
First, for all $x \in P_{1}(\Gamma)$, $(x,x) \in R$. If $x,y \in \mathcal{T}_{e}$ with $(x,y) \in R$, then by definition of $R$, it follows straightforwardly that $(y,x) \in R$.

So the only property needing a proof is transitivity. Let $x,z,w \in \mathcal{T}_{e} \setminus \left( A_{+} \cup A_{-} \right)$
be such that $(x,z), (z,w) \in R$ with $x,z$ and $w$ distinct points. The claim is $(x,w) \in R$.
So we have 4 different possibilities: the first one is 
\[
z\in \Gamma(x), \quad w \in \Gamma(z).
\]
This immediately implies $w \in \Gamma(x)$ and therefore $(x,w) \in R$.
The second possibility is 
\[
z\in \Gamma(x), \quad z \in \Gamma(w),
\]
that can be rewritten as $(z,x), (z,w) \in \Gamma^{-1}$. Since $z \notin A_{-}$, necessarily $(x,w) \in R$. 
Third possibility: 
\[
x\in \Gamma(z), \quad w \in \Gamma(z),
\]
and since $z \notin A_{+}$ it follows that $(x,w) \in R$.
The last case is 
\[
x\in \Gamma(z), \quad z \in \Gamma(w),
\]
and therefore $x \in \Gamma(w)$, hence $(x,w) \in R$ and the claim follows.
\end{proof}

\section{Structure of $d$-monotone sets}
Theorem \ref{T:equivalence} says that the right set to look at in order to perform a reduction of the Monge problem 
to a family of $1$-dimensional Monge problem is
\[
\mathcal{T} : = \mathcal{T}_{e} \setminus (A_{+} \cup A_{-}),
\]
and we will refer to $\mathcal{T}$ as the \emph{transport set}.

%In this section we reduce the optimal transportation problem to a family one-dimensional transport problem.
%The one-dimensional domains will be the equivalence classes of $R$.
The next step is to show that each equivalence class of $R$ is formed by a single geodesic.

\begin{lemma}\label{L:singlegeo}
Fix any $x \in \mathcal{T}$. Then for any $z,w \in R(x)$  there exists 
$\gamma \in G \subset \Geo(X)$ such that 
\[
\{ x, z,w \} \subset \{ \gamma_{s} : s\in [0,1] \}.
\]
If $\hat \gamma \in G$ enjoys the same property, then between the two sets
\[
\{ \hat \gamma_{s} : s \in [0,1] \}, \quad \{ \gamma_{s} : s \in [0,1] \},
\]
an inclusion must hold.
\end{lemma}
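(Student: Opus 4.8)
The plan is to use the four points $x,z,w$ together with the Kantorovich potential $\f^{d}$ to pin down a single geodesic, exploiting that on the transport set $\mathcal T$ there is no branching (by Proposition \ref{P:nobranch}, i.e.\ $x\notin A_{+}\cup A_{-}$). First I would reduce to the case where $z,w$ are both in $\Gamma(x)$ (or both in $\Gamma^{-1}(x)$, or one in each) by a case analysis on the definition $R=\Gamma\cup\Gamma^{-1}$; the mixed cases are handled by first connecting $x$ to the point ``above'' and then down to the point ``below'', using Lemma \ref{L:cicli} to see that concatenations of $\Gamma$-geodesics through points ordered by $\f^{d}$ stay inside $\Gamma$. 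So assume WLOG $\f^{d}(x)\ge\f^{d}(z)\ge\f^{d}(w)$ and $(x,z),(x,w)\in R$; actually after the reduction we may assume $(x,w)\in\Gamma$ and $(x,z)\in\Gamma$ or $(z,w)\in\Gamma$.

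Second, the key geometric fact: if $(x,z),(x,w)\in\Gamma$ with $\f^{d}(x)\ge\f^{d}(z)\ge\f^{d}(w)$, then since $x\notin A_{+}$ we must have $(z,w)\in R$, and because $\f^{d}(z)\ge\f^{d}(w)$ this forces $(z,w)\in\Gamma$. Now I would show $z$ lies on \emph{every} geodesic from $x$ to $w$ that stays in $\Gamma$: pick $\gamma\in G$ with $\gamma_{0}=x$, $\gamma_{1}=w$; since $\f^{d}\circ\gamma$ is continuous and $\f^{d}$ is $1$-Lipschitz while $d(\gamma_{0},\gamma_{s})+d(\gamma_{s},\gamma_{1})=d(\gamma_{0},\gamma_{1})=\f^{d}(\gamma_{0})-\f^{d}(\gamma_{1})$, the function $\f^{d}\circ\gamma$ is \emph{strictly} decreasing wherever $\gamma$ is non-constant and in fact affine in arclength; hence there is a unique $s$ with $\f^{d}(\gamma_{s})=\f^{d}(z)$. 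I then claim $\gamma_{s}=z$: indeed $(x,\gamma_{s})\in\Gamma$ and $(x,z)\in\Gamma$ with $\f^{d}(\gamma_{s})=\f^{d}(z)$, so $(z,\gamma_{s})\in R$ would follow from $x\notin A_{+}$, and $\f^{d}(z)=\f^{d}(\gamma_{s})$ together with $1$-Lipschitzness of $\f^{d}$ gives $d(z,\gamma_{s})=0$, i.e.\ $z=\gamma_{s}$. This produces $\gamma\in G$ through all of $x,z,w$. (In the configuration where instead $(x,z),(z,w)\in\Gamma$, concatenate a $\Gamma$-geodesic from $x$ to $z$ with one from $z$ to $w$ and reparametrize; Lemma \ref{L:cicli} guarantees the result is in $G$, and it visibly passes through all three points.)

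Third, for the uniqueness/inclusion statement, let $\gamma,\hat\gamma\in G$ both contain $\{x,z,w\}$ in their images. Reparametrizing, I may assume both are ordered so that $\f^{d}$ decreases along them, and both contain the point $p$ among $\{x,z,w\}$ with largest $\f^{d}$-value and the point $q$ with smallest. Restricting and rescaling, both $\gamma$ and $\hat\gamma$ contain a geodesic from $p$ to $q$ lying in $\Gamma$; since $(p,q)\in\Gamma$ and $\f^{d}$ is affine along any such geodesic, the image of any $\Gamma$-geodesic from $p$ to $q$ is exactly $\{y:(p,y)\in\Gamma,\ (y,q)\in\Gamma,\ \f^{d}(p)\ge\f^{d}(y)\ge\f^{d}(q)\}$ — the argument of the previous paragraph shows $\f^{d}$ restricted to this set is injective and the point with a given $\f^{d}$-value is unique. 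Hence the $p$-to-$q$ portions of $\gamma$ and $\hat\gamma$ have the \emph{same} image; the full images $\{\gamma_{s}\}$ and $\{\hat\gamma_{s}\}$ are then obtained by possibly extending this common arc on either end, and two arcs sharing a sub-arc and extending a common geodesic segment must be nested. (Here ``extending a common geodesic segment'' uses that both lie in $G$, so both continuations remain $\f^{d}$-monotone $\Gamma$-geodesics through the same endpoint, and geodesics in a length space agreeing on an open sub-arc and in $\Gamma$ past it coincide as far as both are defined.)

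The main obstacle I expect is the last paragraph: turning ``same image on the $p$–$q$ portion'' into the nestedness of the full images requires care that the parts of $\gamma$ beyond $p$ or beyond $q$ cannot ``veer off'' while still lying in $\Gamma$ and still being geodesic — this is really the statement that a $\Gamma$-geodesic is determined by its image on any sub-arc, which in turn rests on the no-branching conclusion of Proposition \ref{P:nobranch} applied not just at $x$ but at all points of $\mathcal T$, and on the fact that geodesics extending a given segment in a proper geodesic space are ordered by inclusion once one controls the direction via the strictly decreasing potential. Everything else is bookkeeping with Lemma \ref{L:cicli} and the definitions.
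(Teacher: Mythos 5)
Your proof is correct and follows essentially the same route as the paper's: the mixed case $z\in\Gamma(x)$, $w\in\Gamma^{-1}(x)$ is settled by the saturation of the triangle inequality $d(w,x)+d(x,z)=\f^{d}(w)-\f^{d}(z)\le d(w,z)$ (the paper phrases this via $d$-cyclical monotonicity rather than as a concatenation, but it is the same computation, and it is what actually makes the concatenated curve a geodesic --- Lemma \ref{L:cicli} alone does not give that), and the case $z,w\in\Gamma(x)$ is settled exactly as you do, by locating on a geodesic from $x$ to the lower point the unique parameter where $\f^{d}$ equals $\f^{d}(z)$ and using $x\notin A_{+}$ plus the $1$-Lipschitz bound to identify that point with $z$. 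One caveat on your closing claim that ``two arcs sharing a sub-arc and extending a common geodesic segment must be nested'': this is false if one arc extends past $p$ and the other past $q$, so the inclusion statement should be read --- as the paper implicitly does in dismissing it as easy --- with $\gamma$ taken to be the geodesic you constructed, whose endpoints lie in $\{x,z,w\}$; with that reading $\{\gamma_{s}\}\subset\{\hat\gamma_{s}\}$ follows from your observation that two points of $R(x)$ with equal $\f^{d}$-value must coincide.
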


Since $G = \{ \gamma \in \Geo(X) : (\gamma_{0},\gamma_{1}) \in \Gamma \}$, Lemma \ref{L:singlegeo} states that 
as soon as we fix an element $x$ in $\mathcal{T}_{e} \setminus (A_{+} \cup A_{-})$ and we pick two elements $z,w$ in the same equivalence class of $x$, then these three points are aligned on 
a geodesic $\gamma$ whose image is again all contained in the same equivalence class $R(x)$. 
Moreover if there is another geodesic $\hat \gamma$, different from $\gamma$, containing the three points, then either
\[
\{ \hat \gamma_{s} : s \in [0,1] \} \subset \{ \gamma_{s} : s \in [0,1] \},
\]
or $\{\gamma_{s} : s \in [0,1] \} \subset \{ \hat \gamma_{s} : s \in [0,1] \}$.

\begin{proof}
The proof is quite similar to the proof of Theorem \ref{T:equivalence}.
First assume that $x, z$ are $w$ all distinct points otherwise the claim follows trivially. 
Consider different cases.

First case: $z \in \Gamma(x)$ and $w \in \Gamma^{-1}(x)$. Then by $d$-cyclical monotonicity
\[
d(z,w) \leq d(z,x) + d(x,w) = \f^{d}(w) - \f^{d}(z) \leq d(z,w).
\]
Hence $z,x$ and $w$ lie on a geodesic. 

Second case: $z,w \in \Gamma(x)$. Without loss of generality $\f^{d}(x) \geq \f^{d}(w) \geq \f^{d}(z)$. Since  in the proof of 
Lemma \ref{L:geoingamma} we have already excluded the case $\f^{d}(w) = \f^{d}(z)$, we assume $\f^{d}(x) > \f^{d}(w) > \f^{d}(z)$. 
Then if there would not exists any geodesic $\gamma \in G$ with $\gamma_{0} = x$ and $\gamma_{1} = z$ and $\gamma_{s} = w$,
there will be $\gamma \in G$ with $(\gamma_{0},\gamma_{1}) = (x,z)$ and $s \in (0,1)$ such that 
\[
\f^{d}(\gamma_{s}) = \f^{d}(w), \qquad \gamma_{s} \in \Gamma(x),  \qquad \gamma_{s} \neq w.
\]
As observed in the proof of Lemma \ref{L:geoingamma}, this would imply that  $(\gamma_{s},w) \notin R$ and 
since $x \notin A_{+}$ this would be a contradiction. Hence the second case follows.

The remaining two cases follow with the same reasoning, exchanging the role of $\Gamma(x)$ with the one of $\Gamma^{-1}(x)$.
The second part of the statement follows now easily.
\end{proof}

The next step is to decompose the reference measure $m$ restricted to $\mathcal{T}$ 
with respect to the partition given by $R$, that is 
\[
\{ [x]   \}_{x \in \mathcal{T}} = 
\{ y \in \mathcal{T}: (x,y) \in R \}_{x \in \mathcal{T}}.
\]
In order to use Disintegration Theorem, we need to construct the quotient map 
\[
f : \mathcal{T} \to \{ [x]   \}_{x \in \mathcal{T}}
\] 
associated to the equivalence relation $R$. To give a precise statement we need to introduce some terminology.

A \emph{cross-section of an equivalence relation $E$} is a set $S \subset X$ such that the intersection of $S$ with each equivalence class of $E$ is a singleton. 
A \emph{section of an equivalence relation $E$} is a map $f : X \to X$ such that for any $x,y \in X$ it holds
\[
(x,f(x)) \in E, \qquad (x,y)\in E \Rightarrow f(x) =f(y).
\]
Note that to each section $f$ is canonically associated a cross-section
\[
S = \{ x \in X : x=f(x) \}.
\]

The following result is taken from \cite{biacava:streconv}, first part of Section 4. 
There the result is proved under the additional assumption of non-branching. That assumption is only used to deduce that 
each equivalence class of $R$ is a single geodesic. We have proved this property in Lemma \ref{L:singlegeo}, 
so we don't need it again.
\begin{proposition}
\label{P:sicogrF}
There exists an $m$-measurable cross section 
\[
f : \mathcal{T} \to \mathcal{T}
\]
for the equivalence relation $R$. 
\end{proposition}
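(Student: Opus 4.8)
The plan is to produce the measurable cross section by building a measurable selection of one point from each equivalence class of $R$, and the natural device is the Kuratowski--Ryll-Nardzewski / von Neumann selection machinery already invoked in the paper (Theorem \ref{T:vanneuma}). Concretely, I would first recall that, by Theorem \ref{T:equivalence}, $R$ restricted to $\mathcal{T}\times\mathcal{T}$ is an equivalence relation, and that $\mathcal{T} = \mathcal{T}_e\setminus(A_+\cup A_-)$ is $\sigma$-compact (as noted in the excerpt, being the difference of two $\sigma$-compact sets), hence a Borel, in particular analytic, subset of the Polish space $X$. The graph of $R$ is closed in $X\times X$ (Remark \ref{R:regularity}), so $R\cap(\mathcal{T}\times\mathcal{T})$ is a Borel subset of $\mathcal{T}\times\mathcal{T}$.

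The first real step is to fix an auxiliary point in each class in a measurable way. Here I would use the potential $\f^{d}$: along any transport ray, $\f^{d}$ is strictly monotone (this is exactly what Lemma \ref{L:singlegeo} and the arguments around Lemma \ref{L:geoingamma} give us — each class is a single geodesic on which $\f^{d}$ is injective). So the idea is to fix, for each class $[x]$, the point where $\f^{d}$ attains a prescribed value, or more robustly: consider the set $F := \{(x,y)\in \mathcal{T}\times\mathcal{T} : (x,y)\in R\}$, apply Theorem \ref{T:vanneuma} to get an $\mathcal{A}$-measurable section $g:\mathcal{T}=P_1(F)\to\mathcal{T}$ of $F$ (so $(x,g(x))\in R$ for all $x$), and then one must force the constancy condition $(x,y)\in R\Rightarrow g(x)=g(y)$, which a raw selection need not satisfy. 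To fix this, I would follow the standard trick: enumerate a countable dense family and select, inside each class, a canonical representative — e.g. redefine $f(x)$ to be the point of $[x]$ closest in $\f^{d}$-value to a fixed constant $c$, taking advantage of the strict monotonicity of $\f^{d}$ on $R(x)$ together with properness to guarantee existence and uniqueness (when the class meets the level $\{\f^{d}=c\}$), and partition $\mathcal{T}$ into countably many $\f^{d}$-level-indexed pieces to handle classes that miss a given level. On each such piece the representative is well-defined and the map $x\mapsto f(x)$ is $m$-measurable because it is obtained by composing $\f^{d}$, the closed relation $R$, and a measurable selection via Theorem \ref{T:vanneuma}.

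Then one checks the two defining properties of a section: $(x,f(x))\in R$ holds by construction since $f(x)\in[x]$; and $(x,y)\in R$ implies $[x]=[y]$ hence $f(x)=f(y)$ by the canonical choice. The associated cross section is $S=\{x : f(x)=x\}$, and $f:\mathcal{T}\to\mathcal{T}$ with image $S$ is the desired $m$-measurable cross section. I would emphasize, as the excerpt already hints, that the only place the non-branching hypothesis entered the original argument of \cite{biacava:streconv} was to know each class is a single geodesic, and this is now supplied by Lemma \ref{L:singlegeo}; the rest of the construction is verbatim.

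The main obstacle is the passage from a mere measurable selection (which Theorem \ref{T:vanneuma} hands us immediately) to a genuine \emph{section}, i.e. a selection that is constant on classes: a careless application of von Neumann selection picks $(x,g(x))\in R$ but with no coherence between $g(x)$ and $g(y)$ for $x\sim y$. Overcoming this requires a canonical intra-class choice, and the cleanest such choice here exploits that $\f^{d}$ is a strictly monotone, continuous coordinate along every transport ray, so ``the point of the ray at a fixed potential level'' is unambiguous; properness of $(X,d)$ (from $\RCD(K,N)$ with $N<\infty$) ensures the relevant infima are attained. Keeping track of measurability through this canonical choice — decomposing $\mathcal{T}$ according to which potential levels a class realizes, and invoking Theorem \ref{T:vanneuma} on each piece — is the technical heart, but it is routine descriptive set theory once the geometric input of Lemma \ref{L:singlegeo} is in hand.
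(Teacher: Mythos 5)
Your proposal is correct and follows essentially the same route as the paper, which itself defers to the construction in \cite{biacava:streconv}, Section 4: partition $\mathcal{T}$ by a countable family of $\f^{d}$-levels, take as canonical representative of each class the unique intersection of the ray with the first level it crosses (uniqueness supplied by Lemma \ref{L:singlegeo} and the strict monotonicity of $\f^{d}$ along rays), and handle measurability via Theorem \ref{T:vanneuma}. You also correctly identify the one point where the original argument used non-branching and why Lemma \ref{L:singlegeo} replaces it.
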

Since
\[
S = f(\mathcal{T} ) = \{ x \in \mathcal{T}  : d(x,f(x)) = 0 \},
\]
it follows that $S$ is $m$-measurable. We can also consider the quotient measure in the following way
\[
q : = f_{\sharp}  \, m\llcorner_{\mathcal{T}} .
\]
By inner regularity of compact sets, there exists a $\sigma$-compact set $\mathcal{S} \subset S$ such that 
$q(S \setminus \mathcal{S}) = 0$. Being $\mathcal{S}$ a Borel set, the Disintegration of $m$ restricted to $f^{-1}(\mathcal{S})$
is strongly consistent: 
\[
m\llcorner_{f^{-1}(\mathcal{S})} = \int_{\mathcal{S}} m_{\alpha} q(d\alpha), \qquad m_{\alpha}(f^{-1}(\alpha)) = \| m_{\alpha} \|, \ q-a.e.
\ \alpha \in \mathcal{S}.
\]
Since $q(S \setminus \mathcal{S}) = 0$ reads also as 
\[
m(\mathcal{T} \setminus f^{-1}(\mathcal{S})) = 0,
\]
the previous disintegration formula becomes
\begin{equation}\label{E:disint}
m\llcorner_{\mathcal{T}} = 
\int_{\mathcal{S}} m_{\alpha} q(d\alpha), \qquad m_{\alpha}(f^{-1}(\alpha)) = \| m_{\alpha} \|, \ q-a.e.
\ \alpha \in \mathcal{S}.
\end{equation}

We conclude this section by recalling a definition of \cite{biacava:streconv}, Section 4.
\begin{definition}[Ray map]
\label{D:mongemap}
Define the \emph{ray map}  $g: \mathcal{S} \times \R \to \mathcal{T}$ via the formula
\begin{align*}
\gr (g) : = &~ \Big\{ (x,t,y): x \in \mathcal{S}, t \in [0,+\infty), y \in \Gamma(x) \cap \mathcal{T}_{e}   \cap \{d(x,y) = t\} \Big\} \crcr
&~ \cup \Big\{ (x,t,y): x \in \mathcal{S}, t \in (-\infty,0), y \in \Gamma^{-1}(x) \cap \mathcal{T}_{e}\cap \{d(x,y) = -t\} \Big\} \crcr
=&~ \gr(g^+) \cup \gr(g^-).
\end{align*}
\end{definition}
Hence the ray map associate to each $y \in \mathcal{S}$ and $t$ the unique element in 
$\Gamma(y)\cap   \mathcal{T}_{e}$ at distance $t$ from $y$
if $t$ is positive or the unique element in 
$\Gamma^{-1}(y) \cap  \mathcal{T}_{e}$ 
at distance $-t$, if $t$ is negative. Thanks to Theorem \ref{T:equivalence} and Lemma \ref{L:singlegeo}, 
the ray map $g$ is well defined.

Next we list few regularity properties enjoyed by $g$.
\begin{proposition}
\label{P:gammaclass}
The following holds.
\begin{enumerate}
\item The restriction of $\gr( g)$ to $\mathcal{S} \times \R$ is analytic, and therefore the map is Borel.
\item The range of $g$ is $\mathcal{T} \cup a \cup b$.
\item $t \mapsto g(y,t)$ is a $d$ $1$-Lipschitz $\Gamma$-order preserving for $y \in \mathcal{S}$.
\item $(t,y) \mapsto g(y,t)$ is bijective on $\mathcal{T}$, and its inverse is
\[
x \mapsto g^{-1}(x) = \big( f(x),\pm d(x,f(x)) \big)
\]
where $f$ is the quotient map of Proposition \ref{P:sicogrF} and the positive/negative sign depends on 
$x \in \Gamma(f(x))$ or $x \in \Gamma^{-1}(f(x))$.
\end{enumerate}
\end{proposition}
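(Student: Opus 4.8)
The plan is to prove the four assertions of Proposition \ref{P:gammaclass} essentially in the order stated, since each one builds on its predecessor. For assertion (1), I would argue that $\gr(g^+)$, as written in Definition \ref{D:mongemap}, is an intersection of sets that are manifestly Borel or $\sigma$-compact: the condition $x \in \mathcal{S}$ cuts out a $\sigma$-compact slice, $y \in \Gamma(x) \cap \mathcal{T}_e$ is $\sigma$-compact by Remark \ref{R:regularity} (recall $\Gamma$ is closed and the space is proper, and $\mathcal{T}_e$ is $\sigma$-compact), and $\{d(x,y) = t\}$ is closed in $\mathcal{S} \times \R \times X$. Hence $\gr(g^+)$ and likewise $\gr(g^-)$ are $\sigma$-compact, so $\gr(g)$ is $\sigma$-compact, in particular analytic; and since by Theorem \ref{T:equivalence} and Lemma \ref{L:singlegeo} the point $y$ is uniquely determined by $(x,t)$ on $\mathcal{T}$, the graph is the graph of a function, which is therefore Borel measurable (an analytic graph that is a genuine graph gives a Borel, indeed universally measurable, map — one may invoke the Lusin--Souslin theorem or Theorem \ref{T:vanneuma}).

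For assertion (2), I would unwind the definitions: $g(y,t)$ for $t > 0$ is the point of $\Gamma(y) \cap \mathcal{T}_e$ at distance $t$ from $y$, and for $t<0$ the point of $\Gamma^{-1}(y) \cap \mathcal{T}_e$ at distance $-t$, while $g(y,0) = y \in \mathcal{S} \subset \mathcal{T}$. Conversely, every $x \in \mathcal{T}$ lies (by Lemma \ref{L:singlegeo} and Theorem \ref{T:equivalence}) on the single geodesic through the representative $f(x) \in \mathcal{S}$ of its $R$-equivalence class, either in $\Gamma(f(x))$ or in $\Gamma^{-1}(f(x))$, so $x = g(f(x), \pm d(x,f(x)))$; thus $\mathcal{T} \subset \mathrm{range}(g)$. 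The endpoints $a \cup b$ are exactly the points reached when one runs the transport ray of $f(x)$ all the way to its extreme, and these are allowed in the definition via the clause $y \in \Gamma(x) \cap \mathcal{T}_e$ (an initial or final point still satisfies $(x,y) \in \Gamma$ or $\Gamma^{-1}$). So $\mathrm{range}(g) = \mathcal{T} \cup a \cup b$.

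For assertion (3): that $t \mapsto g(y,t)$ is $1$-Lipschitz follows because if $g(y,t_1) = x_1$ and $g(y,t_2) = x_2$ with, say, $0 \le t_1 \le t_2$, then $x_1$ and $x_2$ both lie on the transport ray through $y$ with $x_2 \in \Gamma(x_1)$, whence $d(x_1,x_2) = \f^d(x_1) - \f^d(x_2)$; but $\f^d(y) - \f^d(x_i) = d(y,x_i) = t_i$, so $d(x_1,x_2) = t_2 - t_1 = |t_1 - t_2|$, and the mixed-sign and both-negative cases are identical. The equality $d(g(y,t_1),g(y,t_2)) = |t_1 - t_2|$ also shows $t \mapsto g(y,t)$ parametrizes a geodesic and is $\Gamma$-order preserving, i.e. $(g(y,t_1),g(y,t_2)) \in \Gamma$ whenever $t_1 \ge t_2$. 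Finally, assertion (4): injectivity of $(y,t) \mapsto g(y,t)$ on $\mathcal{T}$ follows since $g(y,t) = x$ forces $y$ to be the unique representative $f(x)$ of the class $R(x)$ (distinct $y \in \mathcal{S}$ give disjoint transport rays because $\mathcal{S}$ is a cross-section) and then $t = \pm d(x,y)$ by the length computation just made; surjectivity onto $\mathcal{T}$ and the explicit formula for the inverse are exactly what was shown in the proof of (2). I expect the only delicate point to be assertion (1): one must be careful that the set-builder formula for $\gr(g)$ genuinely describes a function (which is where Lemma \ref{L:singlegeo} and Theorem \ref{T:equivalence} are indispensable, guaranteeing uniqueness of the geodesic and hence of $y$) and that the resulting analytic graph yields a Borel map — everything else is a bookkeeping of definitions already set up in the previous sections.
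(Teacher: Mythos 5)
The first thing to say is that the paper offers no proof of Proposition \ref{P:gammaclass} at all: it is imported verbatim from \cite{biacava:streconv}, Section~4, where it is proved under a non-branching assumption, the implicit claim being that the only place non-branching was used (each class of $R$ is a single geodesic) has been re-established here in Theorem \ref{T:equivalence} and Lemma \ref{L:singlegeo}. So there is no argument in the paper to compare yours against. On its own merits, your treatment of (1), (3) and (4) is correct and is the standard one: $\gr(g^{\pm})$ is an intersection of $\sigma$-compact sets by Remark \ref{R:regularity}, hence $\sigma$-compact, hence analytic, and single-valuedness (from Lemma \ref{L:singlegeo}) plus the Lusin--Souslin theorem gives Borel measurability; the computation $d(g(y,t_{1}),g(y,t_{2}))=|t_{1}-t_{2}|$ via $\f^{d}(g(y,t))=\f^{d}(y)-t$ gives (3) and the sign/injectivity claims in (4).

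The one genuine gap is in your proof of (2), and it is a gap the paper inherits as well. The graph of $g$ is defined through $\Gamma(x)\cap\mathcal{T}_{e}$, not $\Gamma(x)\cap\mathcal{T}$, so to conclude that the range is contained in $\mathcal{T}\cup a\cup b$ you must show that a point $y\in\Gamma^{\pm 1}(x)\cap\mathcal{T}_{e}$ with $x\in\mathcal{S}$, $y\notin a\cup b$, cannot lie in $A_{+}\cup A_{-}$. In the forward direction this is easy for $A_{+}$: if $y\in\Gamma(x)$ and $z,w\in\Gamma(y)$ then $z,w\in\Gamma(x)$ by transitivity of $\Gamma$, so $y\in A_{+}$ would force $x\in A_{+}$. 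But the analogous claim for $A_{-}$ (and symmetrically for points reached with $t<0$) does not follow from transitivity: knowing $x\notin A_{-}$ controls branching of $\Gamma^{-1}$ \emph{at} $x$, not at interior points of the ray through $x$. A tripod-shaped configuration of $\Gamma$ (one incoming segment splitting at a point $p$ into two outgoing ones) shows that, set-theoretically, a transport ray through a point of $\mathcal{T}$ can pass through points of $A_{+}\setminus(a\cup b)$; nothing in your argument, or in the paper, rules this out inside an $\RCD$ space, where branching is only excluded up to $m$-measure zero. Your sentence identifying $a\cup b$ with ``the points reached when one runs the ray to its extreme'' assumes exactly what needs to be proved. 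Note that this does not damage the rest of the paper, since the disintegration only uses $g$ restricted to $g^{-1}(\mathcal{T})$, where part (4) holds by your argument; but as a proof of statement (2) as written, the step is missing.
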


In this Section we have obtained the first result of this note. In particular we have shown that given a $d$-monotone set 
$\Gamma$, neglecting a set of $m$-measure zero, the set of all those points moved by $\Gamma$, denoted with $\mathcal{T}_{e}$, 
can be written as the union of a family of  disjoint geodesics. 

We include the result in the next theorem.

\begin{theorem} Let $(X,d,m)$ be a metric measure space verifying $\RCD(K,N)$ for some $K,N \in \R$, with $N\geq 1$.
Let moreover $\Gamma$ be a $d$-cyclically monotone set as \eqref{E:Gamma} and let $\mathcal{T}_{e}$ be the set of all points moved by $\Gamma$ as in Definition \ref{D:transport}.
Then there exists $\mathcal{T} \subset \mathcal{T}_{e}$ that we call transport set such that 
\[
m(\mathcal{T}_{e} \setminus \mathcal{T}) = 0, \quad 
\]
and for all $x \in \mathcal{T}$, the transport ray $R(x)$ is formed by a single geodesic and for $x\neq y$, both in $\mathcal{T}$, either $R(x) = R(y)$
or $R(x) \cap R(y)$ is contained in the set of initial points $a \cup b$ as Definition \ref{D:transport}.
\end{theorem}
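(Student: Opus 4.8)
The final theorem is essentially a restatement of Theorem~\ref{T:1}, so the plan is to assemble the pieces already proved in Sections~4 and~5 rather than to do new work. The plan is as follows. First, I would define $\mathcal{T} := \mathcal{T}_{e} \setminus (A_{+} \cup A_{-})$, exactly as at the start of Section~5. The $m$-negligibility of $\mathcal{T}_{e} \setminus \mathcal{T}$ is then immediate: by Proposition~\ref{P:nobranch} the set $A_{+}$ has $m$-measure zero, and by the symmetric statement recorded right after it, $A_{-}$ has $m$-measure zero as well; hence $m(A_{+} \cup A_{-}) = 0$, which is precisely $m(\mathcal{T}_{e} \setminus \mathcal{T}) = 0$. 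The measurability remarks (both sets are $\sigma$-compact, so $\mathcal{T}$ is $\sigma$-compact) are already in place and need only be cited.

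**The structure of $R(x)$.** Next I would show that for every $x \in \mathcal{T}$ the transport ray $R(x)$ is the image of a single geodesic. Fix $x \in \mathcal{T}$ and let $z, w \in R(x)$ be arbitrary. By Lemma~\ref{L:singlegeo} the three points $x, z, w$ lie on the image of a common geodesic $\gamma \in G$, and any two such geodesics through $x, z, w$ are nested (one image contains the other). Taking the union over all pairs $z, w \in R(x)$ of these (nested) geodesic images, one gets that $R(x)$ is covered by an increasing family of geodesic segments through $x$; since $(X,d)$ is proper and $\varphi^{d}$ is $1$-Lipschitz with $\varphi^{d}$ strictly monotone along each such segment, the union is itself (the image of) a single, possibly maximal, geodesic. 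This is the place where one must be a little careful: one should phrase ``$R(x)$ is formed by a single geodesic'' as ``$R(x)$ is the image of one geodesic in $G$'', and use that the ray map $g(f(x),\cdot)$ of Definition~\ref{D:mongemap} parametrizes exactly $R(x) \cap (\mathcal{T}_{e} \cup a \cup b)$, via Proposition~\ref{P:gammaclass}(2)--(4); strictly speaking this is the rigorous incarnation of the statement.

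**The intersection property.** Finally, for $x \neq y$ both in $\mathcal{T}$, I would show that either $R(x) = R(y)$ or $R(x) \cap R(y) \subset a \cup b$. By Theorem~\ref{T:equivalence}, $R$ is an equivalence relation on $\mathcal{T}$, so either $(x,y) \in R$, in which case $R(x) = R(y)$, or $(x,y) \notin R$, in which case the classes $R(x) \cap \mathcal{T}$ and $R(y) \cap \mathcal{T}$ are disjoint. In the second case any point $p \in R(x) \cap R(y)$ cannot lie in $\mathcal{T}$ (else $x \sim p \sim y$ would force $(x,y) \in R$), so $p \in \mathcal{T}_{e} \setminus \mathcal{T}$ or $p \notin \mathcal{T}_{e}$; but $p$ being on a transport ray emanating from a point of $\mathcal{T}$ and not being an interior point of that ray (since it is a ray endpoint—otherwise transitivity again forces a contradiction) means $p \in a \cup b$. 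This last deduction—that a common point of two distinct rays must be an initial or final point—is the only genuinely delicate step, and the cleanest way to nail it is to argue via the ray map: if $p$ were a non-endpoint of $R(x)$, then both sides of $p$ along $R(x)$ sit in $\mathcal{T}_{e}$, and the single-geodesic property of Lemma~\ref{L:singlegeo} applied to $R(y)$ through $p$ would glue the two rays into one, contradicting $(x,y)\notin R$. Assembling these three observations gives the theorem.

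**Main obstacle.** The main subtlety is not any single computation but the precise bookkeeping in the two ``either/or'' dichotomies: ensuring that ``common point of distinct rays'' really is forced into $a \cup b$ and not merely into $\mathcal{T}_{e}\setminus\mathcal{T}$. I expect that to be handled exactly as in \cite{biacava:streconv}, using that the endpoints of a transport ray are by Definition~\ref{D:transport} precisely the points that cannot be further extended within $\Gamma$ or $\Gamma^{-1}$, together with transitivity of $R$ on $\mathcal{T}$ from Theorem~\ref{T:equivalence}.
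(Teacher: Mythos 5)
Your assembly is the same as the paper's: the paper gives no separate proof of this theorem, presenting it as a summary of Sections 4--5, so defining $\mathcal{T}=\mathcal{T}_{e}\setminus(A_{+}\cup A_{-})$ and citing Proposition \ref{P:nobranch} (together with its symmetric counterpart for $A_{-}$), Theorem \ref{T:equivalence} and Lemma \ref{L:singlegeo} is exactly the intended argument. Your treatment of the measure-zero claim, of the single-geodesic structure of $R(x)$, and of the disjointness of the equivalence classes inside $\mathcal{T}$ is correct.

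The one step where your argument does not close is the final containment $R(x)\cap R(y)\subset a\cup b$, and you correctly flagged it as the delicate point. If $x\neq y$ in $\mathcal{T}$ with $(x,y)\notin R$ and $p\in R(x)\cap R(y)$, a short case analysis (the mixed cases $(x,p),(p,y)\in\Gamma$ or $(y,p),(p,x)\in\Gamma$ would force $(x,y)\in\Gamma$, a contradiction) shows that necessarily $x,y\in\Gamma^{-1}(p)$ or $x,y\in\Gamma(p)$, hence $p\in A_{-}\cup A_{+}$; but this only places $p$ in the $m$-null set $A_{+}\cup A_{-}$, not in $a\cup b$. Your proposed fix --- that Lemma \ref{L:singlegeo} applied ``through $p$'' glues the two rays into one --- implicitly uses transitivity of $R$ at $p$, which Theorem \ref{T:equivalence} provides only for $p\in\mathcal{T}$; that is precisely what fails, since $p\in A_{+}\cup A_{-}$. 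A configuration in which $R(x)$ and $R(y)$ merge at a non-endpoint $p$ and run together afterwards puts the entire merged segment inside $A_{+}\cup A_{-}$, and this is compatible with $m(A_{+}\cup A_{-})=0$ because a single geodesic segment can be $m$-null; so the measure-theoretic information alone does not exclude it. To be fair, the paper supplies no proof of this clause either; but as written your argument establishes only $R(x)\cap R(y)\subset A_{+}\cup A_{-}$, and the stronger containment in $a\cup b$ would require an additional pointwise non-branching input (as in \cite{biacava:streconv}, where it is immediate from the standing non-branching assumption).
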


\section{Regularity of disintegration}\label{S:regularity}

Now we show that for $q$-a.e. $y \in \mathcal{S}$
\begin{equation}\label{E:ac}
m_{y} \ll  \left( g(y,\cdot) \right)_{\sharp} \mathcal{L}^{1}.
\end{equation}

Property \eqref{E:ac} is linked to the 
behavior in time of the measure of evolving subsets of $\mathcal{T}$, where the ``evolving subsets'' has to be made precise.

Since in $\RCD(K,N)$-spaces a concavity estimate for densities of $L^{2}$-geodesics in $\mathcal{P}_{2}(X,d,m)$ holds, 
it is natural to look for a definition of evolution inside the transport set 
where an $L^{2}$-structure can come into play.

\begin{lemma}\label{L:evo1}
For each $C \subset \mathcal{T}$ and $\delta \in \R$ the set 
\[
\left( C \times \{ \f^{d}= \delta \} \right) \cap \Gamma,
\]
is $d^{2}$-cyclically monotone.
\end{lemma}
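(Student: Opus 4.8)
The plan is to reduce the statement of Lemma \ref{L:evo1} to a direct application of Lemma \ref{L:12monotone}. Recall that Lemma \ref{L:12monotone} asserts that any subset $\Delta \subset \Gamma$ with the property that
\[
(x_{0},y_{0}), (x_{1},y_{1}) \in \Delta \quad \Rightarrow \quad (\f^{d}(y_{1}) - \f^{d}(y_{0}))\cdot (\f^{d}(x_{1}) - \f^{d}(x_{0})) \geq 0
\]
is automatically $d^{2}$-cyclically monotone. So it suffices to check that the set $\Delta := (C \times \{\f^{d} = \delta\}) \cap \Gamma$ has this monotonicity property.

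First I would observe that $\Delta \subset \Gamma$ by construction, so Lemma \ref{L:12monotone} is applicable provided the sign condition holds. Now take any two pairs $(x_{0},y_{0}), (x_{1},y_{1}) \in \Delta$. Since both $y_{0}$ and $y_{1}$ lie in the level set $\{\f^{d} = \delta\}$, we have $\f^{d}(y_{0}) = \f^{d}(y_{1}) = \delta$, hence
\[
\f^{d}(y_{1}) - \f^{d}(y_{0}) = 0.
\]
Therefore the product $(\f^{d}(y_{1}) - \f^{d}(y_{0}))\cdot(\f^{d}(x_{1}) - \f^{d}(x_{0}))$ equals $0$, which is in particular $\geq 0$. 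Thus the hypothesis of Lemma \ref{L:12monotone} is satisfied trivially, and we conclude that $\Delta = (C \times \{\f^{d} = \delta\}) \cap \Gamma$ is $d^{2}$-cyclically monotone.

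There is essentially no obstacle here: the whole point of introducing the second marginal into the level set $\{\f^{d} = \delta\}$ is to force the second factor in the product to vanish, so that the sign condition of Lemma \ref{L:12monotone} is met for free regardless of $C$. The only minor point worth noting is that the set $C$ plays no role in the argument beyond guaranteeing $\Delta \subset \Gamma \subset \mathcal{T}_{e} \times \mathcal{T}_{e}$ (via Lemma \ref{L:mapoutside}); the monotonicity is entirely a consequence of the constancy of $\f^{d}$ on $\{\f^{d} = \delta\}$. I would present the proof as this one-line reduction, emphasizing that this $d^{2}$-monotonicity is exactly what will later allow the concavity estimate of Proposition \ref{P:localization} for $L^{2}$-geodesics to be brought to bear on the one-dimensional dynamics along the transport rays.
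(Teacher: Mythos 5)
Your proof is correct and is essentially identical to the paper's: both reduce the claim to Lemma \ref{L:12monotone} by observing that $\Delta\subset\Gamma$ and that the factor $\f^{d}(y_{1})-\f^{d}(y_{0})$ vanishes because both second coordinates lie in the level set $\{\f^{d}=\delta\}$, making the sign condition hold trivially. No further comment is needed.
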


\begin{proof} The proof follows easily from Lemma \ref{L:12monotone}, indeed the set 
$\left( C \times \{ \f^{d}= c \} \right) \cap \Gamma$ is trivially a subset of $\Gamma$ and whenever  
\[
(x_{0},y_{0}), (x_{1},y_{1}) \in \left( C \times \{ \f^{d}= \delta \} \right) \cap \Gamma,
\]
then $(\f^{d}(y_{1}) -  \f^{d}(y_{0})  ) \cdot (\f^{d}(x_{1}) -  \f^{d}(x_{0})  ) = 0$.
\end{proof}
We can deduce the following

\begin{corollary}\label{C:evo1}
For each $C \subset \mathcal{T}$ and $\delta \in \R$
define 
\[
C_{\delta}: =P_{1}(\left( C \times \{ \f^{d}= \delta \} \right) \cap \Gamma).
\]
Then if $m(C_{\delta}) > 0$, there exists a unique $\nu \in \Opt$ such that 
\begin{equation}\label{E:12mappa}
\left( e_{0} \right)_{\sharp} \nu = m( C_{\delta} )^{-1} m\llcorner_{C_{\delta}},  
\qquad (e_{0},e_{1})_{\sharp}( \nu ) \left(   \left( C \times \{ \f^{d}= \delta \} \right) \cap \Gamma \right) = 1.
\end{equation}
\end{corollary}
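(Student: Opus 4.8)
The plan is to combine Lemma \ref{L:evo1} with the known existence/uniqueness results for $d^{2}$-optimal transport under the $\RCD(K,N)$ condition. First I would set
\[
\Delta_{\delta} := \left( C \times \{ \f^{d} = \delta \} \right) \cap \Gamma ,
\]
so that $C_{\delta} = P_{1}(\Delta_{\delta})$ and, by Lemma \ref{L:evo1}, $\Delta_{\delta}$ is $d^{2}$-cyclically monotone. Since $m(C_{\delta}) > 0$ by assumption, I would take $\mu_{0} := m(C_{\delta})^{-1} m\llcorner_{C_{\delta}}$, which is a probability measure with $\mu_{0} \ll m$, and $\mu_{1} := (P_{2})_{\sharp}\big( (P_{1}|_{\Delta_{\delta}})^{-1}{}_{\sharp}\mu_{0}\big)$; more concretely, since every $x \in C_{\delta}$ is joined by $\Delta_{\delta}$ to a point $y$ with $\f^{d}(y) = \delta$ lying on the transport ray through $x$ (and, because $x \in \mathcal{T}$, this point is unique by Theorem \ref{T:equivalence} and Lemma \ref{L:singlegeo}), the assignment $x \mapsto y$ is a well-defined Borel map $S_{\delta}$ on $C_{\delta}$, and I would set $\mu_{1} := (S_{\delta})_{\sharp}\mu_{0}$. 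One should check $W_{2}(\mu_{0},\mu_{1}) < \infty$; this follows because $d(x,S_{\delta}(x)) = |\f^{d}(x) - \delta|$ is bounded on $C_{\delta}$ (the set $\{\f^{d} = \delta\}$ being fixed and $C_{\delta}$ having finite $m$-measure inside a proper space, one restricts to a bounded piece, or simply notes $\f^{d}$ is $1$-Lipschitz so the displacement is controlled).

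Next I would invoke the structure result quoted after Proposition \ref{P:localization}: in an $\RCD(K,N)$ space with $N < \infty$, if the first marginal is absolutely continuous with respect to $m$, then there is a \emph{unique} element $\nu \in \Opt(\mu_{0},\mu_{1})$, it is induced by a map, and it is concentrated on a set of non-branching geodesics. Since $\Delta_{\delta}$ is $d^{2}$-cyclically monotone and contains the graph of $S_{\delta}$, the plan $(P_{1},S_{\delta})_{\sharp}\mu_{0}$ is the unique $d^{2}$-optimal transference plan between $\mu_{0}$ and $\mu_{1}$; lifting it to geodesics via the non-branching selection gives a $\nu \in \Opt$ with $(e_{0})_{\sharp}\nu = \mu_{0}$ and $(e_{0},e_{1})_{\sharp}\nu = (P_{1},S_{\delta})_{\sharp}\mu_{0}$, which is concentrated on $\Delta_{\delta}$ by construction. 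This is exactly \eqref{E:12mappa}. Uniqueness of $\nu$ within $\Opt$ subject to \eqref{E:12mappa} then follows from the uniqueness clause of that same structure theorem, since any such $\nu$ has first marginal $\mu_{0} \ll m$ and is supported on $d^{2}$-cyclically monotone $\Delta_{\delta}$, hence is $d^{2}$-optimal between $\mu_{0}$ and its own second marginal, which is forced to be $\mu_{1}$ by the second condition in \eqref{E:12mappa}.

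The main obstacle I anticipate is measurability and well-posedness of the map $S_{\delta}$ (equivalently, Borel measurability of $C_{\delta}$ and of $x \mapsto y$): one needs that $\Delta_{\delta}$ is a Borel — indeed $\sigma$-compact, once $C$ is taken Borel — subset of $X \times X$, so that $C_{\delta}$ is analytic and the section exists by Theorem \ref{T:vanneuma}, and that on $\mathcal{T}$ this section is genuinely single-valued so that no measurable-selection ambiguity remains; here Theorem \ref{T:equivalence}, Lemma \ref{L:singlegeo}, and the ray map $g$ of Definition \ref{D:mongemap} do the work, since $y = g(f(x), t)$ with $|t|$ determined by $\f^{d}(g(f(x),t)) = \delta$. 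A secondary technical point is ensuring finiteness of the $W_{2}$-cost so that $\Opt(\mu_{0},\mu_{1})$ is nonempty and the quoted theorems apply; this is handled by a routine restriction-to-bounded-sets argument using properness of $X$ and the $1$-Lipschitz bound on $\f^{d}$, and I would not belabor it.
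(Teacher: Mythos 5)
Your argument is correct and follows exactly the deduction the paper intends (the paper states the corollary without proof, as an immediate consequence of Lemma \ref{L:evo1} combined with the $\RCD$ existence/uniqueness theory for $d^{2}$-optimal plans with absolutely continuous first marginal). Your additional care in constructing the second marginal via the single-valued section $S_{\delta}$ over $\mathcal{T}$, and in restricting to bounded/compact $C$ to guarantee finite $W_{2}$-cost, fills in precisely the details the paper leaves implicit.
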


\bigskip
\bigskip
From Corollary \ref{C:evo1} and the $\RCD$ condition, we infer the existence of a map $T_{C,\delta}$ depending on $C$ and $\delta$ such that 
\[
\left( Id,T_{C,\delta} \right)_{\sharp} \left( m( C_{\delta} )^{-1} m\llcorner_{C_{\delta}} \right) = (e_{0},e_{1})_{\sharp} \nu.
\]
Taking advantage of the ray map $g$, we define a convex combination between the identity map and 
$T_{C,\delta}$ as follows:
\[
C_{\delta} \ni x \mapsto \left(T_{C,\delta}\right)_{t}(x) = \{ z \in \Gamma(x) : d(x,z) = t \cdot d(x,T_{C,\delta}(x)) \}.
\]
Since $C \subset \mathcal{T}$, the map $\left(T_{C,\delta}\right)_{t}$ is well defined.
We then define the evolution of any subset $A$ of $C_{\delta}$ in the following way:
\[
[0,1] \ni t \mapsto \left(T_{C,\delta}\right)_{t}(A).
\]
In particular from now on we will adopt the following notation:  
\[
A_{t} : = \left(T_{C,\delta} \right)_{t}(A), \qquad \forall A \subset C_{\delta}, \ A \ \textrm{ compact}. 
\]

So for any $C \subset \mathcal{T}$ compact and  $\delta \in \R$ we have defined an evolution 
for compact subsets of $C_{\delta}$. The definition of the evolution depends both on $C$ and $\delta$.

\begin{remark}\label{R:regularity2}
Here we spend few lines on the measurability of the maps involved in the definition of evolution of sets. 
First note that since $\Gamma$ is closed, if $C$ is compact the same holds for $C_{\delta}$.
Moreover 
\[
\gr (T_{C,\delta}) = \left( C \times \{ \f^{d}= \delta \} \right) \cap \Gamma,
\]
hence $T_{C,\delta}$ is continuous. Moreover 
\[
\left(T_{C,\delta}\right)_{t}(A) = P_{2} \left( \{(x,z) \in \Gamma \cap (A \times X) : d(x,z) = t\cdot d(x,T_{C,\delta}(x))  \}\right),
\]
hence if $A$ is compact, the same holds for $\left(T_{C,\delta}\right)_{t}(A)$. It is also possible to show that 
\[
[0,1] \ni t \mapsto m(\left(T_{C,\delta}\right)_{t}(A)) 
\]
is $m$-measurable. We refer to \cite{biacava:streconv}, Lemma 5.2, for its proof.
\end{remark}

The next result gives  quantitative information on the behavior of the map $t \mapsto m(A_{t})$. 
The statement will be given assuming the lower bound on the generalized Ricci curvature $K$ to be positive. 
Analogous estimates holds for any $K \in \R$.

\begin{proposition}\label{P:mcp}
For each $C \subset \mathcal{T}$ and $\delta \in \R$ such that $m(C_{\delta}) >0$, it holds
\begin{equation}\label{E:mcp}
m(  A_{t}  ) \geq  (1-t) \cdot \inf_{x\in C_{\delta}}   \left(\frac{\sin ((1-t) d(x,T_{C,\delta}(x) )\sqrt{K/(N-1)}   ) }
{\sin ( d(x,T_{C,\delta}(x))\sqrt{K/(N-1)}   )} \right)^{N-1}  m(A), 
\end{equation}
for all $t \in [0,1]$ and $A \subset C_{\delta}$ compact set.
\end{proposition}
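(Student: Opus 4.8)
The plan is to reduce the estimate \eqref{E:mcp} to the density concavity inequality of Proposition \ref{P:localization}, applied to the unique optimal geodesic plan $\nu \in \Opt$ produced by Corollary \ref{C:evo1}. First, I would recall that by Corollary \ref{C:evo1} the measure $\mu_{0} := m(C_{\delta})^{-1} m\llcorner_{C_{\delta}}$ is absolutely continuous with respect to $m$ and is the initial marginal of a unique $\nu \in \Opt(\mu_{0},\mu_{1})$, with $\mu_{1} := (T_{C,\delta})_{\sharp}\mu_{0}$, concentrated on $\left(C\times\{\f^{d}=\delta\}\right)\cap\Gamma$. By Theorem \ref{T:existenceinfinity} this plan is induced by a non-branching family of geodesics; combining this with Theorem \ref{T:equivalence} and Lemma \ref{L:singlegeo}, which guarantee that the geodesics used to define $(T_{C,\delta})_{t}$ on the transport set $\mathcal{T}$ are the same ones carried by $\nu$, I would identify $A_{t} = (e_{t})_{\sharp}(\nu\llcorner_{e_{0}^{-1}(A)})$ up to the normalization $m(C_{\delta})$. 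In particular $\mu_{t} := (e_{t})_{\sharp}\nu \ll m$ for all $t$ by Proposition \ref{P:localization}, so writing $\mu_{t} = \r_{t} m$ makes sense.

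Next I would turn the pointwise density inequality into a set-measure inequality. Fix $A \subset C_{\delta}$ compact and set $A^{x} := g(\cdot)$-image through the single geodesic $\gamma$ passing through $x$. Applying Proposition \ref{P:localization} with $s = 0$, $r = t$, $t_{\mathrm{old}} = 1$ (keeping $K > 0$ so that $\sigma_{K,N}^{(\cdot)}$ are the sine-type coefficients) gives, for $\nu$-a.e.\ $\gamma$,
\[
\r_{t}(\gamma_{t})^{-1/N} \;\geq\; \r_{1}(\gamma_{1})^{-1/N}\,\sigma_{K,N}^{(t)}(d(\gamma_{0},\gamma_{1})) \;+\; \r_{0}(\gamma_{0})^{-1/N}\,\sigma_{K,N}^{(1-t)}(d(\gamma_{0},\gamma_{1})) \;\geq\; \r_{0}(\gamma_{0})^{-1/N}\,\sigma_{K,N}^{(1-t)}(d(\gamma_{0},\gamma_{1})),
\]
where I have simply discarded the non-negative first summand. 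Recalling that $\sigma_{K,N}^{(1-t)}(\theta) = \frac{\sin((1-t)\theta\sqrt{K/(N-1)})}{\sin(\theta\sqrt{K/(N-1)})}\cdot\bigl(\tfrac{\text{something}}{\cdot}\bigr)$ — more precisely that $\sigma_{K,N}^{(1-t)}(\theta)^{N} \le (1-t)^{?}\left(\frac{\sin((1-t)\theta\sqrt{K/(N-1)})}{\sin(\theta\sqrt{K/(N-1)})}\right)^{N-1}$ after the standard comparison between $\sigma_{K,N}$ and $\sigma_{K,N-1}$ — I would raise to the power $-N$ and integrate. Integrating $\r_{t}(\gamma_{t})$ against the push-forward of $\nu$ by $e_{t}$ over the fibre of $A$ and using the change-of-variables along the ray map $g$ to pass from $\int_{A_{t}} \r_{t}\, dm$ (which equals the $\nu$-mass of $e_{0}^{-1}(A)$, a constant independent of $t$) to a lower bound on $m(A_{t})$ itself, one extracts exactly the infimum of the sine-ratio power appearing in \eqref{E:mcp}, together with the linear factor $(1-t)$ coming from the behaviour of $\sigma_{K,N}^{(1-t)}$ near $\theta = 0$ (i.e.\ $\sigma_{K,N}^{(1-t)}(0) = 1-t$).

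The main obstacle I anticipate is the measurability/change-of-variables bookkeeping along the ray map: one must justify that $m\llcorner_{\mathcal{T}}$ disintegrates along the rays \eqref{E:disint}, that on $q$-a.e.\ ray the conditional is transported by $(T_{C,\delta})_{t}$ in the way the one-dimensional picture predicts, and that $t \mapsto m(A_{t})$ is measurable (this last point is quoted from \cite{biacava:streconv}, Lemma 5.2, as in Remark \ref{R:regularity2}). A secondary technical point is the precise algebraic manipulation of the distortion coefficients $\sigma_{K,N}^{(\beta)}$ to match the $(1-t)\cdot(\sin/\sin)^{N-1}$ form of the right-hand side; this is the classical $\mathsf{MCP}(K,N)$-type computation and follows from $\sigma_{K,N}^{(\beta)}(\theta) \geq \beta^{1/N}\,\bigl(\sigma_{K,N-1}^{(\beta)}(\theta)\bigr)^{(N-1)/N}$ together with monotonicity of the sine ratio in $\theta$. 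Once these are in place, taking the infimum over $x \in C_{\delta}$ of the geodesic-wise bound and summing over the rays yields \eqref{E:mcp}, and the analogous computation for general $K \in \R$ (with $\sin$ replaced by $\sinh$ or by the linear function when $K \le 0$) is identical.
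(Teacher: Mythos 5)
There is a genuine gap, on two counts. First, Proposition \ref{P:localization} cannot legitimately be applied to the plan $\nu$ of Corollary \ref{C:evo1}: its hypotheses require \emph{both} marginals to be absolutely continuous with respect to $m$, whereas here the second marginal $(T_{C,\delta})_{\sharp}\bigl(m(C_{\delta})^{-1}m\llcorner_{C_{\delta}}\bigr)$ is concentrated on the level set $\{\f^{d}=\delta\}$ and there is no reason for it to be absolutely continuous. Setting $\r_{1}(\gamma_{1})^{-1/N}=0$ and ``discarding the first summand'' is precisely the degenerate situation that needs justification; the way to justify it is to approximate the singular second marginal, which is exactly what the paper does: it replaces the target by finitely many points $y_{i}$ dense in $T_{C,\delta}(C_{\delta})$, partitions $C_{\delta}$ into the nearest--target cells $E_{i,I}$, checks via the argument of Lemma \ref{L:evo1} that $\bigcup_{i}E_{i,I}\times\{y_{i}\}$ is $d^{2}$-cyclically monotone (so these contractions are the optimal evolution), applies the measure contraction property to each contraction onto the single point $y_{i}$, and lets $I\to\infty$.

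Second, and more seriously, your algebraic manipulation of the distortion coefficients goes in the wrong direction. The correct comparison is $\tau_{K,N}^{(\beta)}(\theta):=\beta^{1/N}\bigl(\sigma_{K,N-1}^{(\beta)}(\theta)\bigr)^{(N-1)/N}\;\geq\;\sigma_{K,N}^{(\beta)}(\theta)$, not the reverse inequality you write. Hence the $\sigma_{K,N}$-based concavity of Proposition \ref{P:localization} can only produce a bound of the form $m(A_{t})\geq \inf\bigl(\sigma_{K,N}^{(1-t)}\bigr)^{N}m(A)$, which is \emph{weaker} than \eqref{E:mcp}: the coefficient $(1-t)\bigl(\sin((1-t)\theta\sqrt{K/(N-1)})/\sin(\theta\sqrt{K/(N-1)})\bigr)^{N-1}$ equals $\bigl(\tau_{K,N}^{(1-t)}(\theta)\bigr)^{N}$ and is precisely the $\mathsf{MCP}(K,N)$ coefficient. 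That $\RCD(K,N)$ implies $\mathsf{MCP}(K,N)$ with these full coefficients is a theorem (\cite{cavasturm:MCP}, \cite{giglirajasturm:optimalmaps}), which the paper invokes; it is not recoverable by a pointwise inequality between $\sigma$'s, and your route as written cannot reach the stated constant. The remaining bookkeeping issues you flag (measurability of $t\mapsto m(A_{t})$, identification of the evolution with $e_{t}$ along the rays) are real but secondary to these two points.
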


\begin{proof}
The proof of \eqref{E:mcp} is obtained by the standard method of approximation with Dirac deltas of the second marginal.
More precisely: consider a sequence $\{ y_{i} \}_{i \in \enne} \subset \{\f^{d} = \delta\}$ dense in $T_{C,\delta}(C_{\delta})$.
For each $I \in \N$, define the family of sets 
\[
E_{i,I} : = \{ x \in C_{\delta} : d(x,y_{i}) \leq  d(x,y_{j}) , j =1,\dots, I \},
\]
for $i =1, \dots, I$.  Then for all $I \in \N$, by the same argument of Lemma \ref{L:evo1}, the set 
\[
\bigcup_{i =1}^{I} E_{i,I}\times \{ y_{i} \} \subset X \times X, 
\]
is $d^{2}$-cyclically monotone. Since $\RCD(K,N)$ implies $\mathsf{MCP}(K,N)$, see \cite{giglirajasturm:optimalmaps} 
and \cite{cavasturm:MCP}, and the $L^{2}$ optimal plans are unique, the estimate \eqref{E:mcp} is proved letting $I \to \infty$.
\end{proof}

\subsection{Absolute continuity of conditional measures}

We are now ready to prove that for $q$-a.e. $y \in \mathcal{S}$ \eqref{E:ac} holds. 
For each $y \in \mathcal{S}$ we consider 
the Radon-Nikodym derivative of $m_{y}$ with respect  to $g(y,\cdot)_\sharp \mathcal{L}^1$:
\[
m_y = r(y,\cdot) g(y,\cdot)_\sharp \mathcal{L}^1 + \omega_y, \quad \omega_y \perp g(y,\cdot)_\sharp \mathcal{L}^1
\]

\begin{lemma}\label{L:dec}
There exists a Borel set $C\subset X$ such that
\[
\mathcal{L}^{1} \big( g^{-1} (C) \cap (\{y\} \times \R) ) \big) = 0, \qquad \omega_y = m_y \llcorner_C,
\]
for $q$-a.e. $y \in \mathcal{S}$.
\end{lemma}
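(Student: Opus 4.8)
The goal is to split the singular part $\omega_y$ off uniformly in $y$, i.e. to produce a single Borel set $C$ which is $\mathcal{L}^1$-null on every ray (in $g$-coordinates) and which carries $\omega_y$ for $q$-a.e.\ $y$. The natural approach is to disintegrate the full measure $m\llcorner_\mathcal{T}$ against the product structure $\mathcal S \times \R$ induced by the ray map $g$, and use the essential uniqueness of the Lebesgue decomposition together with a measurable-selection argument.

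First I would push $m\llcorner_\mathcal{T}$ through $g^{-1}$: set $\hat m := (g^{-1})_\sharp (m\llcorner_\mathcal{T})$, a $\sigma$-finite Borel measure on $\mathcal S \times \R$, which by Proposition~\ref{P:gammaclass}(4) and the disintegration \eqref{E:disint} disintegrates as $\hat m = \int_\mathcal{S} \hat m_y \, q(dy)$ with $\hat m_y$ the image of $m_y$ under $g(y,\cdot)^{-1}$ (up to the sign ambiguity, which is harmless since it only reverses the real line). Each $\hat m_y$ is a measure on $\R$, and its Lebesgue decomposition w.r.t.\ $\mathcal L^1$ reads $\hat m_y = \hat r(y,\cdot)\,\mathcal L^1 + \hat\omega_y$, the pushforward of the decomposition in the statement. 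Now apply the disintegration of $\hat m$ with respect to the projection $\mathcal{S}\times\R \to \mathcal{S}$ \emph{and} with respect to the second coordinate; more precisely, decompose $\hat m$ itself (as a measure on $\mathcal S \times \R$) into its part absolutely continuous w.r.t.\ $q \otimes \mathcal L^1$ and its singular part $\hat m^{\mathrm{sing}}$. By a standard fact on disintegrations (essential uniqueness of Lebesgue decomposition, fiberwise), the fiber of $\hat m^{\mathrm{sing}}$ over $y$ is exactly $\hat\omega_y$ for $q$-a.e.\ $y$. Since $\hat m^{\mathrm{sing}} \perp q\otimes\mathcal L^1$, there is a Borel set $E \subset \mathcal S \times \R$ with $\hat m^{\mathrm{sing}}(E^c) = 0$ and $(q\otimes\mathcal L^1)(E) = 0$; Fubini then gives $\mathcal L^1(E \cap (\{y\}\times\R)) = 0$ for $q$-a.e.\ $y$, and $\hat\omega_y = \hat m_y \llcorner_E$ for $q$-a.e.\ $y$.

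Finally I would pull $E$ back: set $C := g(E) \subset \mathcal T$ (equivalently $g^{-1}(C) = E$, using bijectivity of $g$ on $\mathcal T$ from Proposition~\ref{P:gammaclass}(4)). Borel measurability of $C$ follows from Proposition~\ref{P:gammaclass}(1), which gives that $g$ restricted to $\mathcal S\times\R$ has analytic graph and is Borel; one checks $g$ is in fact a Borel isomorphism onto its image $\mathcal{T}$ so that images of Borel sets are Borel (or, if one prefers to avoid that, one keeps $C$ as an analytic/universally measurable set, which suffices for the measure-theoretic statements and can be corrected to Borel up to an $m$-null set). Transporting back the identities $\mathcal L^1(E\cap(\{y\}\times\R))=0$ and $\hat\omega_y = \hat m_y\llcorner_E$ via $g(y,\cdot)$ yields exactly $\mathcal L^1(g^{-1}(C)\cap(\{y\}\times\R)) = 0$ and $\omega_y = m_y\llcorner_C$ for $q$-a.e.\ $y$.

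**Main obstacle.** The substantive point is not the abstract decomposition but its measurability/consistency: one must know that the Lebesgue decomposition $\hat m_y = \hat r(y,\cdot)\mathcal L^1 + \hat\omega_y$ can be chosen jointly measurably in $(y,\cdot)$, i.e.\ that $\hat r$ is a Borel function on $\mathcal S\times\R$ and $y\mapsto\hat\omega_y$ is a Borel family, so that $\hat m^{\mathrm{sing}} = \int \hat\omega_y\, q(dy)$ is a well-defined Borel measure whose fibers coincide $q$-a.e.\ with the $\hat\omega_y$. This is the standard (but slightly technical) "measurable Lebesgue decomposition" statement; I would either quote it from a reference on disintegration of measures (e.g.\ the same source used for Proposition~\ref{P:sicogrF}) or derive it from the Radon--Nikodym theorem applied on $\mathcal S\times\R$ to the pair $\hat m$ and $q\otimes\mathcal L^1$, whose joint measurability is automatic. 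Everything else — the change of variables through $g$, the Fubini argument producing the null set on each ray, and the pullback of $C$ — is routine given Propositions~\ref{P:gammaclass} and the disintegration \eqref{E:disint}.
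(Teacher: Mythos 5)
Your proposal is correct and is essentially the paper's argument viewed through the other side of the ray map: the paper takes the Lebesgue decomposition of $m$ with respect to $\lambda = g_\sharp(q\otimes\mathcal{L}^1)$ directly on $X$ and lets $C$ be the Borel set carrying the singular part (so $\lambda(C)=0$), whereas you decompose $(g^{-1})_\sharp m$ against $q\otimes\mathcal{L}^1$ on $\mathcal{S}\times\R$ and pull the null set back through $g$ — the same decomposition transported by the Borel bijection of Proposition \ref{P:gammaclass}. The consistency point you flag (that the fibers of the global singular part coincide $q$-a.e.\ with the fiberwise singular parts $\omega_y$) is exactly the step the paper also relies on, stated there without further elaboration.
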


\begin{proof}
Consider the measure
\[
\lambda = g_\sharp (q \otimes \mathcal L^1),
\]
and compute the Radon-Nikodym decomposition
\[
m  = \frac{D m}{D \lambda} \lambda + \omega.
\]
Then there exists a Borel set $C$ such that $\omega = m \llcorner_C$ and $\lambda(C)=0$. The set $C$ proves the Lemma. Indeed $C = \cup_{y \in [0,1]} C_{y}$ where $C_{y} = C \cap f^{-1}(y)$ is such that $m_y \llcorner_{C_{y}} = \omega_{y} $ and 
$\left( g(y,\cdot)_{\sharp}\mathcal{L}^{1} \right)(C_{y})=0$ for $q$-a.e. $y \in \mathcal{S}$.
\end{proof}

\begin{theorem}
\label{T:a.c.}
For $q$-a.e. $y \in \mathcal{S}$, the conditional probabilities $m_y$ are absolutely continuous w.r.t. $g(y,\cdot)_\sharp \mathcal{L}^1$.
\end{theorem}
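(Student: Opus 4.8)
The plan is to argue by contradiction. Assuming the singular parts $\omega_{y}$ are non-trivial on a set of positive $q$-measure, Lemma \ref{L:dec} repackages them as a single Borel set $C$ which is $\mathcal{L}^{1}$-negligible on every transport ray yet has $m(C)>0$; flowing $C$ along the rays towards a fixed level set of $\f^{d}$ and then playing the curvature lower bound of Proposition \ref{P:mcp} against Fubini's theorem will produce a contradiction.

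So suppose the conclusion fails. Writing $m_{y}=r(y,\cdot)\,g(y,\cdot)_{\sharp}\mathcal{L}^{1}+\omega_{y}$, the set $\{y\in\mathcal{S}:\omega_{y}\neq 0\}$ has positive $q$-measure, hence the set $C$ given by Lemma \ref{L:dec} satisfies $m(C)=\int_{\mathcal{S}}\|\omega_{y}\|\,q(dy)>0$ by \eqref{E:disint}, while $\mathcal{L}^{1}\big(g^{-1}(C)\cap(\{y\}\times\R)\big)=0$ for $q$-a.e.\ $y$. Next I would localize: using inner regularity, the Borel regularity of the ray map $g$ (Proposition \ref{P:gammaclass}), the measurability of the endpoint set $a\cup b$, and, if needed, exchanging $\Gamma$ with $\Gamma^{-1}$, replace $C$ by a compact subset of positive $m$-measure, still $\mathcal{L}^{1}$-null on each ray, for which there is $\delta\in\R$ with $C=C_{\delta}$ and such that $\f^{d}(x)-\delta$ lies in a fixed interval $[\rho,L]\subset(0,\infty)$ for all $x\in C$; shrinking $L$ we may also assume $L\sqrt{|K|/(N-1)}<\pi/2$ when $K>0$. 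Corollary \ref{C:evo1} and the $\RCD$ condition then provide the map $T_{C,\delta}$ and the evolution $C_{t}:=(T_{C,\delta})_{t}(C)$, $t\in[0,1]$, with all $C_{t}$ contained in a fixed bounded (hence $m$-finite) set $D$, namely the union of the ray segments joining $C$ to $\{\f^{d}=\delta\}$.

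The heart of the argument is that this flow disperses the ray-null set $C$. Parametrizing a ray $R(\alpha)$ by the arclength variable of $g(\alpha,\cdot)$, Definition \ref{D:mongemap} and Lemma \ref{L:cicli} give $\f^{d}(g(\alpha,s))=\f^{d}(\alpha)-s$, so $T_{C,\delta}$ is the single point of $R(\alpha)$ with $\f^{d}=\delta$ and $\ell(g(\alpha,s)):=d\big(g(\alpha,s),T_{C,\delta}(g(\alpha,s))\big)=\f^{d}(\alpha)-\delta-s$, which stays $\geq\rho$ on the range where $g(\alpha,s)\in C$. Fixing $z=g(\alpha,s_{0})$, one has $z\in C_{t}$ precisely when some $g(\alpha,s)\in C$ obeys $s_{0}=s+t\,\ell(g(\alpha,s))$, that is $t=(s_{0}-s)/(\f^{d}(\alpha)-\delta-s)$; as a function of $s$ this is Lipschitz on the set $\{s:g(\alpha,s)\in C\}$ (smooth, with denominator $\geq\rho$ on a bounded range), and that set is $\mathcal{L}^{1}$-null by Lemma \ref{L:dec}. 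Hence $\mathcal{L}^{1}\big(\{t\in[0,1]:z\in C_{t}\}\big)=0$ for every $z$.

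Finally, the measurability of $t\mapsto m(C_{t})$ and of $(z,t)\mapsto\ind_{C_{t}}(z)$ provided by the $\sigma$-compact description of the evolution in Remark \ref{R:regularity2} lets one apply Fubini:
\[
\int_{0}^{1} m(C_{t})\,dt=\int_{D}\mathcal{L}^{1}\big(\{t\in[0,1]:z\in C_{t}\}\big)\,m(dz)=0.
\]
On the other hand Proposition \ref{P:mcp} (or its analogue for general $K$), together with $\rho\leq\f^{d}(x)-\delta\leq L$ and $L\sqrt{|K|/(N-1)}<\pi/2$, bounds the right-hand side of \eqref{E:mcp} from below by $\kappa\,m(C)$ for $t\in[0,1/2]$, with $\kappa>0$ depending only on $\rho,L,K,N$; thus $\int_{0}^{1}m(C_{t})\,dt\geq\tfrac{\kappa}{2}\,m(C)>0$. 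This contradiction forces $\omega_{y}=0$ for $q$-a.e.\ $y$, which is exactly the statement. I expect the main difficulty to be the measurable localization in the second step — extracting a compact piece of $C$ on which a single $\delta$ works with uniform distance bounds while keeping $\mathcal{L}^{1}$-nullity on the rays — together with the measurability bookkeeping needed to invoke Fubini; the geometric content (Proposition \ref{P:mcp}) and the observation that a Lipschitz map sends null sets to null sets are the easy core.
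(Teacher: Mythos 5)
Your proposal is correct and follows essentially the same route as the paper: contradiction via the set $C$ of Lemma \ref{L:dec}, localization to a compact piece flowing onto a level set $\{\f^{d}=\delta\}$, the lower bound of Proposition \ref{P:mcp} on $m(C_{t})$, and a Fubini argument showing $\int_{0}^{1}m(C_{t})\,dt=0$ because the time-reparametrization along each ray is bi-Lipschitz and so preserves $\mathcal{L}^{1}$-nullity. The only cosmetic difference is that you perform Fubini directly on $X\times[0,1]$ with $z$ fixed (pushing the null fibre set forward by a Lipschitz map), whereas the paper works in the ray coordinates $(y,\tau)$ after applying $g^{-1}$ and pulls the null set back through the smooth map $t\mapsto(\tau-\tau(y,\delta)t)/(1-t)$; these are equivalent.
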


\begin{proof}
{\it Step 1.}
Take as $C$ the set constructed in Lemma \ref{L:dec} and suppose by contradiction that
\[
m(C) > 0,
\]
and we already know that $q \otimes \mathcal{L}^1 (g^{-1}(C)) = 0$.

We want to find $\hat C \subset C$ compact set with $m(\hat C) > 0$ and $\delta \in \R$ such that for each $z \in \hat C$ there exists 
$w \in \Gamma(z)$ such that $\f^{d}(w) = \delta$.
Possibly localizing, we can assume that for each $z \in C$
\[
\sup\{ d(x,y) : x,y \in R(z) \} \geq M, 
\]
and $C \subset B_{\ve}(\bar z)$ for some $\bar z \in \mathcal{T}$ and $\ve \leq M$.
Then the $1$-Lipschitz property of $\f^{d}$ implies the existence of such $\delta$ and $\hat C$ compact, such that $m(\hat C)>0$ 
with 
\[
\hat C = \hat C_{\delta}.
\]
We can therefore consider the evolution in time of $\hat C$ with respect to $\delta$, $(\hat C)_{t}$ and  for $t \in [0,1]$
the inequality \eqref{E:mcp} holds.

{\it Step 2.}
Since $\hat C \subset C$, it still holds that
\[
q \otimes \mathcal{L}^{1} (g^{-1}(\hat C)) = 0.
\]
In particular, for all $t \in [0,1]$ it follows that
\[
q \otimes \mathcal{L}^1 (g^{-1}((\hat C)_t)) \leq q \otimes \mathcal{L}^1 (g^{-1}(\hat C)) = 0.
\]
Indeed since the evolution of $\hat C$ runs along the transport rays, $f(\hat C_{t}) = f(\hat C)$ where $f$ is the quotient map.
Moreover on each single transport ray, the evolution of $\hat C$ is just the linear contraction to a single point. Hence the inequality follows.
\\
We also need the following object: for each $y \in f(\hat C) = P_{1}(g^{-1}(\hat C))$ there exists only one $\tau \in \R$, say $\tau(y)$ such that 
\[
g(y,\tau) \in  \{ \f^{d} = \delta\}.
\]
To underline the ($m$-measurable) dependence of $\tau$ on $y$ and $\delta$, we will denote it with $\tau(y,\delta)$. 
With this notation, we can express $\hat C_{t}$ in the following way:
\[
\hat C_{t} = g \left(   \left\{ (y, \tau +(\tau(y,\delta) - \tau) t ) : (y,\tau) \in g^{-1}(\hat C) \right\}  \right), 
\]
and consequently 
\[
g^{-1}\left(\hat C_{t} \right)=    \left\{ (y, \tau +(\tau(y,\delta) - \tau) t ) : (y,\tau) \in g^{-1}(\hat C) \right\}. 
\]
Then by Fubini-Tonelli Theorem and Proposition \ref{P:mcp}
\begin{align}\label{A:chain}
0< &~ \int_{(0,1/2)} m(\hat C_t) dt  =  \int_{(0,1/2)} \bigg( \int_{g^{-1}(\hat C_t)} \left( (g^{-1})_\sharp m \right) (dyd\tau) \bigg) dt \crcr
=&~ \big( (g^{-1})_\sharp  m   \big) \otimes\mathcal{L}^1  \Big( \Big\{ (y,\tau,t) \in \mathcal{S}\times \R \times [0,1/2]: (y,\tau) \in g^{-1}(\hat C_{t}) \Big\} \Big) \crcr
= &~ \int_{\mathcal{S} \times \R} \mathcal{L}^1 \left( \left\{ t \in [0,1/2] :   (y, \tau ) \in g^{-1}(\hat C_{t})  \right\} \right) 
\left( g^{-1}_{\sharp} m \right)(dyd\tau), \crcr
= &~ \int_{\mathcal{S} \times \R} \mathcal{L}^1 \left( \left\{ t \in [0,1/2] :   \left(y, \frac{\tau - \tau(y,\delta)t }{1-t} \right) \in
g^{-1}(\hat C)  \right\} \right) \left( g^{-1}_{\sharp} m \right)(dyd\tau).
%=&~ \int_{S \times \R} \mathcal{L}^1 \big( g^{-1}(C \cap f^{-1}(y)) \big) \left( (g^{-1})_{\sharp} m \right) (dyd\tau) \crcr
%=&~ \int_{S} \mathcal{L}^1 \big( g^{-1}(C \cap f^{-1}(y)) \big) m(dy) = 0.
\end{align}
Now by definition of $C$, for $q$-a.e. $y \in \mathcal{S}$,
\[
\mathcal{L}^{1}\left( \{ t\in \R : (y,t) \in g^{-1}(\hat C) \} \right)= 0.
\]
Since the function $[0,1/2] \ni t \mapsto \left( \tau- \tau(y,\delta)t \right)/(1-t)$
is smooth, also the following holds
\[
\mathcal{L}^1 \left( \left\{ t \in [0,1/2] :   \left(y, \frac{\tau - \tau(y,\delta)t }{1-t} \right) \in g^{-1}(\hat C)  \right\} \right)  = 0,
\]
for $q$-a.e.  $y \in \mathcal{S}$.

Since $\left(P_{1} \right)_{\sharp} \left( (g^{-1})_{\sharp} m \right)  = q$ it follows that the last integral in \eqref{A:chain} is null,
giving a contradiction with the strictly positive sign of the first one.
\end{proof}

\section{Existence of solution to the Monge problem}

Using Theorem \ref{T:a.c.} we prove the existence of an $m$-measurable map $\hat T : X \to X$ such that 
\[
\int_{X} d(x,\hat T(x)) \mu_{0}(dx) =  
\inf_{T_{\sharp}\mu_{0} = \mu_{1}}  \int_{X} d(x,T(x)) \mu_{0}(dx),
\]
with $\hat T_{\sharp} \mu_{0} = \mu_{1}$, provided $\mu_{0}$ is absolutely continuous with respect to $m$. 
So assume $\mu_{0} = \r_{0} m$. 
\\
\\

Justified by Lemma \ref{L:mapoutside}, extension \eqref{E:extere} and Proposition \ref{P:nobranch}, we assume that $\mu_{0}(\mathcal{T})=\mu_{1}(\mathcal{T}_{e}) =1$. 
Then \eqref{E:disint} gives that 
\begin{equation}\label{E:disintmu}
\mu_{0} = \r_{0} m\llcorner_{\mathcal{T}} =\int_{\mathcal{S}} \r_{0}m_{\alpha} q(d\alpha) = \int_{\mathcal{S}} \mu_{0,y} q_{\mu_{0}}(dy),
\end{equation}
where $\mu_{0,y} = c(y) \r_{0}m_{\alpha}$ with $c(y)$ normalizing constant, and $q_{\mu_{0}} = c(y)^{-1}q$. 

Since $R$ is an equivalence relation only on $\mathcal{T}$ and a priori $\mu_{1}(\mathcal{T}_{e}\setminus \mathcal{T} ) > 0$
is not excluded, \eqref{E:disintmu} is not automatically true for $\mu_{1}$.
To get a disintegration for $\mu_{1}$ we pass through a disintegration of a given $\eta \in \Pi_{opt}(\mu_{0},\mu_{1})$.

\begin{lemma}\label{L:disinteta}
Let $\eta \in \Pi_{opt}(\mu_{0},\mu_{1})$ be given, then the following disintegration formula holds:
\[
\eta = \int_{\mathcal{S}} \eta_{y} q_{\mu_{0}}(dy), \qquad \eta_{y} \in \mathcal{P}( \left(R(y) \cap \mathcal{T}\right) \times R(y)), 
\]
with $P_{1\,\sharp} \eta_{y} = \mu_{0,y}$.
\end{lemma}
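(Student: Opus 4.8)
The plan is to obtain the disintegration of $\eta$ by ``pushing'' the disintegration \eqref{E:disintmu} of $\mu_{0}$ through $\eta$, using the fact that $\eta$ is concentrated on $\Gamma$ and that the first marginal is $\mu_{0}$, which gives no mass to $\mathcal{T}_{e}\setminus \mathcal{T}$. First I would introduce the quotient map $f:\mathcal{T}\to\mathcal{S}$ of Proposition \ref{P:sicogrF} and consider the map $F:\mathcal{T}\times X \to \mathcal{S}$ defined by $F(x,y) = f(x)$; this is $m\otimes(\text{anything})$-measurable and, since $\mu_{0}(\mathcal{T})=1$, it is defined $\eta$-a.e. Setting $q_{\mu_{0}} := F_{\sharp}\eta = f_{\sharp}\mu_{0}$ (which is consistent with the normalization in \eqref{E:disintmu}), the Disintegration Theorem applied to $\eta$ with respect to $F$ yields a strongly consistent family $\{\eta_{y}\}_{y\in\mathcal{S}}$ with $\eta = \int_{\mathcal{S}} \eta_{y}\, q_{\mu_{0}}(dy)$ and $\eta_{y}$ concentrated on $F^{-1}(y) = (R(y)\cap\mathcal{T})\times X$ for $q_{\mu_{0}}$-a.e.\ $y$. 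Here I use that $f^{-1}(y) = R(y)\cap\mathcal{T}$ by construction of the cross-section.

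The second step is to identify the two marginals of $\eta_{y}$. Projecting the disintegration onto the first coordinate, $P_{1\,\sharp}\eta = \int_{\mathcal{S}} P_{1\,\sharp}\eta_{y}\, q_{\mu_{0}}(dy)$, and by uniqueness of the disintegration of $\mu_{0} = P_{1\,\sharp}\eta$ with respect to $f$ (again Disintegration Theorem, comparing with \eqref{E:disintmu}), we get $P_{1\,\sharp}\eta_{y} = \mu_{0,y}$ for $q_{\mu_{0}}$-a.e.\ $y$. For the second marginal, I would argue that $\eta$ is concentrated on $\Gamma$, so $\eta_{y}$ is concentrated on $\Gamma \cap (F^{-1}(y))$ for $q_{\mu_{0}}$-a.e.\ $y$; since any $(x,z)\in\Gamma$ with $x\in R(y)\cap\mathcal{T}$ forces $z\in R(x) = R(y)$ (because $(x,z)\in\Gamma\subset R$ and $R$ is the transport-ray relation, which on $\mathcal{T}$ is an equivalence relation by Theorem \ref{T:equivalence}, while $x\in\mathcal{T}$), we conclude that $\eta_{y}$ lives on $(R(y)\cap\mathcal{T})\times R(y)$. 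The only subtlety is that $z$ need not lie in $\mathcal{T}$: it may be an endpoint, but it still belongs to $R(y)$, which is exactly what the statement asserts.

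The main obstacle I expect is purely measure-theoretic bookkeeping: verifying that all the sets involved ($R(y)\cap\mathcal{T}$, $\Gamma$, the graph of $F$) are sufficiently measurable for the Disintegration Theorem to apply with strong consistency, and that the quotient spaces can be taken to be the $\sigma$-compact $\mathcal{S}$ so that the disintegration is well-defined. This is handled by the regularity statements already recorded (Remark \ref{R:regularity}, Proposition \ref{P:sicogrF}, Proposition \ref{P:gammaclass}) together with the fact, established in Proposition \ref{P:nobranch}, that $m(\mathcal{T}_{e}\setminus\mathcal{T})=0$ so that $\mu_{0}\ll m$ charges only $\mathcal{T}$. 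Granting these, the proof is a direct application of the Disintegration Theorem twice (once for $\eta$, once for $\mu_{0}$) followed by the compatibility of the two disintegrations via the common quotient map $f$, and the localization of $\Gamma$ onto single equivalence classes.
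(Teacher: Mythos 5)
Your proposal is correct and follows essentially the same route as the paper: both disintegrate $\eta$ along the partition of $(\mathcal{T}\times X)\cap\Gamma$ induced by $y\mapsto (R(y)\cap\mathcal{T})\times X$ (equivalently by the map $(x,z)\mapsto f(x)$), identify the quotient measure with $q_{\mu_{0}}$ via $\eta(f^{-1}(I)\times X)=\mu_{0}(f^{-1}(I))$, and localize the second coordinate onto $R(y)$ using $\Gamma\subset R$ together with the equivalence-relation property on $\mathcal{T}$. Your explicit appeal to uniqueness of disintegrations to get $P_{1\,\sharp}\eta_{y}=\mu_{0,y}$ is a point the paper leaves implicit, but it is the intended argument.
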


\begin{proof}
Since $\eta(\mathcal{T} \times X \cap \Gamma ) = 1$, we want to find the right partition of 
$\left( \mathcal{T} \times X \right) \cap \Gamma$ and that can be done via the partition $\{R(y)\}_{y \in \mathcal{S}}$ of $\mathcal{T}$:
\[
\left( \mathcal{T} \times X \right) \cap \Gamma  = \bigcup_{y \in \mathcal{S}} \left( R(y) \cap \mathcal{T} \right) \times X.
\]
Then by Disintegration Theorem it follows that 
\[
\eta = \int_{\mathcal{S}} \eta_{y} q_{\eta}(dy), \qquad \eta_{y} \in \mathcal{P}(\left( \left(R(y)\cap \mathcal{T}\right) \times X \right) \cap \Gamma). 
\]
Since $\left(\left( R(y)\cap \mathcal{T}\right) \times X \right) \cap \Gamma) \subset \left(R(y)\cap \mathcal{T}\right) \times R(y)$,
to prove the claim we need to show that:
\[
q_{\eta} = q_{\mu_{0}}.
\]
Since for $I \subset \mathcal{S}$ it holds that 
\[
\mu_{0} \left( f^{-1}(I) \right) = \eta \left( f^{-1}(I) \times X \right) 
=  \eta \left( \left\{ (x,y) \in \Gamma : x \in \mathcal{T}, f(x) \in I   \right\} \right),
\]
the claim follows.
\end{proof}

We can obtain a dimensional reduction also for $\mu_{1}$:
\begin{equation}
\label{E:mu1}
\mu_{1} = P_{2\,\sharp} \eta = \int_{\mathcal{S}} (P_{2})_{\sharp }\eta_y \,q_{\mu_{0}}(dy) = 
\int_{\mathcal{S}} \mu_{1, y} \,q_{\mu_{0}}(dy).
\end{equation}
In particular $\eta_y \in \Pi(\mu_{0,y},\mu_{1,y})$ is $d$-cyclically monotone (and hence optimal, because $R(y)$ is one dimensional) for $q_{\mu_{0}}$-a.e. $y$. If $\mu_{1}(\mathcal{T}) = 1$, 
then \eqref{E:mu1} is the disintegration of $\mu_{1}$ w.r.t. $R$.

\begin{remark}
Since $\mu_{0} (\mathcal{T}) =1$ and for each $y \in \mathcal{T}$ the set $R(y)$ is 1-dimensional and 
for $y \in \mathcal{S}$ we have proved that $R(y) = g(y,\R)$, 
without loss of generality we can refer to $\mu_{0,y}$ and $\mu_{1,y}$ as Borel probability measures over $\R$.
It will be clear from the context whether we still refer to them as measures over $X$.
\end{remark}

We now prove the existence of a solution to Monge minimization problem.

\begin{theorem}
\label{T:mongeff}
Let $(X,d,m)$ be a metric measure space verifying $\RCD(K,N)$ for $N<\infty$. Let $\mu_{0},\mu_{1} \in \mathcal{P}(X)$ 
with $W_{1}(\mu_{0},\mu_{1}) < \infty$ and $\mu_{0}\ll m$. 
Then there exists a Borel map $T: X \to X$ such that $T_{\sharp} \mu_{0} = \mu_{1}$ and 
\[
\int_{X} d(x,T(x)) \mu_{0}(dx) = \int_{X \times X} d(x,y) \eta(dxdy),
\]
for any $\eta \in \Pi_{opt}(\mu_{0},\mu_{1})$.
\end{theorem}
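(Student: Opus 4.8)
The plan is to assemble the one-dimensional optimal maps produced along each ray into a global Borel map, and check that the gluing is measurable and that the resulting map has the correct cost. We already have all the structural ingredients: by Lemma~\ref{L:mapoutside} and the extension \eqref{E:extere} together with Proposition~\ref{P:nobranch} we may work on the transport set $\mathcal{T}$, on which $R$ is an equivalence relation (Theorem~\ref{T:equivalence}), each class is a single geodesic (Lemma~\ref{L:singlegeo}) parametrized by the ray map $g(y,\cdot)$ of Definition~\ref{D:mongemap}, and the reference measure disintegrates as in \eqref{E:disint}. The disintegrations \eqref{E:disintmu} and \eqref{E:mu1} then give, for $q_{\mu_0}$-a.e.\ $y \in \mathcal{S}$, a pair of Borel probability measures $\mu_{0,y},\mu_{1,y}$ on $R(y) \cong \R$ (identifying via $g(y,\cdot)$), and $\eta_y \in \Pi(\mu_{0,y},\mu_{1,y})$ is one-dimensional optimal.

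First I would invoke Theorem~\ref{T:a.c.}: for $q_{\mu_0}$-a.e.\ $y$ the conditional $m_y$ is absolutely continuous with respect to $g(y,\cdot)_\sharp \mathcal{L}^1$, hence so is $\mu_{0,y} = c(y)\r_0 m_y$. In particular $\mu_{0,y}$ has no atoms. Therefore Theorem~\ref{T:oneDmonge} applies on each ray: pulling everything back to $\R$ via $g(y,\cdot)$, there is a (unique, up to the countable set where it matters) nondecreasing map $T_y : \R \to \R$ with $(T_y)_\sharp \mu_{0,y} = \mu_{1,y}$, and since the cost $|s-t|$ is convex, $T_y$ is optimal for the one-dimensional problem, with $\int |s - T_y(s)|\,\mu_{0,y}(ds) = \int |s-t|\,\eta_y(ds\,dt)$. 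Transporting back to $X$, define on $R(y) \cap \mathcal{T}$ the map $\hat T(x) := g(y, T_y(g(y,\cdot)^{-1}(x)))$ where $y = f(x)$; because $g(y,\cdot)$ is an isometry from an interval of $\R$ onto $R(y)$ (item (3) of Proposition~\ref{P:gammaclass}, $1$-Lipschitz and $\Gamma$-order preserving, so in fact distance preserving along the ray), we get $d(x,\hat T(x)) = |g(y,\cdot)^{-1}(x) - T_y(g(y,\cdot)^{-1}(x))|$, matching the one-dimensional cost.

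Next I would verify measurability of $x \mapsto \hat T(x)$. The ingredients: $f$ is $m$-measurable (Proposition~\ref{P:sicogrF}), $g$ and $g^{-1}$ are Borel (Proposition~\ref{P:gammaclass}(1) and (4)), so it suffices that $(y,s) \mapsto T_y(s)$ be jointly measurable. This follows from the explicit formula in Theorem~\ref{T:oneDmonge}(1): $T_y(s) = \sup\{t : F_y(t) \le H_y(s)\}$, where $H_y, F_y$ are the distribution functions of $\mu_{0,y},\mu_{1,y}$; joint measurability of $(y,s)\mapsto H_y(s)$ and $(y,t)\mapsto F_y(t)$ is a standard consequence of the measurable dependence of the disintegration $y \mapsto \mu_{i,y}$ (e.g.\ $H_y(s) = \mu_{0,y}((-\infty,s))$ is measurable in $y$ for each $s$ and monotone in $s$), and the $\sup$ over rationals preserves it. Hence $\hat T$ is $m$-measurable, and redefining it on an $m$-null set and outside $\mathcal{T}$ via \eqref{E:extere} yields a Borel map (modifying on a $\mu_0$-null set since $\mu_0 \ll m$). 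Finally, integrating the ray-by-ray identity against $q_{\mu_0}$ and using \eqref{E:disintmu}, $(P_i)_\sharp \eta_y = \mu_{i,y}$, and Lemma~\ref{L:disinteta} gives
\[
\int_X d(x,\hat T(x))\,\mu_0(dx) = \int_{\mathcal{S}} \Big( \int d\,(Id,T_y)_\sharp\mu_{0,y} \Big) q_{\mu_0}(dy) = \int_{\mathcal{S}} \Big( \int d\,\eta_y \Big) q_{\mu_0}(dy) = \int_{X\times X} d(x,y)\,\eta(dx\,dy),
\]
and $\hat T_\sharp \mu_0 = \mu_1$ follows from $(T_y)_\sharp \mu_{0,y} = \mu_{1,y}$ and \eqref{E:mu1}. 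I expect the main obstacle to be the measurability of the gluing — establishing joint measurability of $(y,s)\mapsto T_y(s)$ from the disintegration, and making sure the exceptional $q_{\mu_0}$-null set of rays (where Theorem~\ref{T:a.c.} or one-dimensionality could fail) can be absorbed into an $m$-null, hence $\mu_0$-null, set via the extension \eqref{E:extere}. The geometric content (no atoms, one-dimensional optimality, cost matching) is already packaged in the earlier results.
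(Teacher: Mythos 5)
Your proposal is correct and follows essentially the same route as the paper: reduce via the ray map to a family of one-dimensional problems on $\mathcal{S}\times\R$, use Theorem \ref{T:a.c.} to ensure $\mu_{0,y}$ has no atoms, apply the monotone rearrangement of Theorem \ref{T:oneDmonge} on each ray, and match costs through the disintegration of $\eta$ from Lemma \ref{L:disinteta}. The only (inessential) difference is in how measurability of the glued map is handled — the paper reduces to $\mathcal{S}$ compact with $y\mapsto(\mu_{0,y},\mu_{1,y})$ weakly continuous and checks Borel-ness of $T$ via the preimage formula, while you argue joint measurability of $(y,s)\mapsto T_y(s)$ directly from the measurable dependence of the disintegration; both are standard and equivalent.
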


\begin{proof}
{\it Step 1.} 
By means of the map $g^{-1}$, we reduce to a transport problem on $\mathcal{S} \times \R$, with cost
\[
c((y,s),(y',t)) =
\begin{cases}
|t - s| & y = y' \crcr
+ \infty & y \not= y'
\end{cases}
\]
It is enough to prove the theorem in this setting under the following assumptions: $\mathcal{S}$ compact 
and $\mathcal{S} \ni y \mapsto (\mu_{0,y},\mu_{1,y})$ weakly continuous. 

From the weak continuity of the map $y \mapsto (\mu_{0,y},\mu_{1,y})$, it follows that the maps
\[
(y,t) \mapsto H(y,t) := \mu_{0,y}((-\infty,t)), \ \ (y,t) \mapsto F(y,t) := \mu_{1,y}((-\infty,t))
\]
are lower semi-continuous.  Both are increasing in $t$ and $H$ is continuous in $t$.
The map $T$ defined as in Theorem \ref{T:oneDmonge} by
\[
T(y,s) := \Big( y, \sup \big\{ t : F(y,t) \leq H(y,s) \big\} \Big)
\]
is Borel. In fact, for $A$ Borel,
\[
T^{-1}(A \times [t,+\infty)) = \big\{ (y,s) : y \in A, H(y,s) \geq F(y,t) \big\} \in \mathcal{B}(S \times \R).
\]

{\it Step 2.} 
Since $\mu_{0,y}$ has no atoms for $q_{\mu_{0}}$-a.e. $y \in \mathcal{T}$,
$T(y,\cdot)$ is optimal for the transport problem between $\mu_{0,y}$ and $\mu_{1,y}$ with cost $|\cdot|$.
By $d$-cyclical monotonicity, the same holds for $\eta_{y}$. Then using Lemma \ref{L:disinteta}, it follows that 
\begin{align*}
\int d(x,T(x)) \mu_{0}(dx) = &~ \int_{\mathcal{S}} \int_{\R} |t-T(y,t)| \mu_{0,y}(dt) q_{\mu_{0}}(dy) \crcr
= &~ \int_{\mathcal{S}} \int_{X \times X} d(x,z) \eta_{y}(dxdz) q_{\mu_{0}}(dy)  \crcr
=&~ \int d(x,z) \eta(dxdz).
\end{align*}
The claim follows.
\end{proof}

Since a priori 
\[
\inf_{\eta \in \Pi_{\mu_{0},\mu_{1}}} \int d(x,z) \eta(dxdz)  \leq \inf_{T_{\sharp}\mu_{0} = \mu_{1}} \int d(x,T(x)) \mu_{0}(dx),  
\]
we have also proved \eqref{E:K=M}.

\begin{corollary}\label{C:M=K}
Let $(X,d,m)$ be a metric measure space verifying $\RCD(K,N)$ for $N<\infty$. Let $\mu_{0},\mu_{1} \in \mathcal{P}(X)$ 
with $W_{1}(\mu_{0},\mu_{1}) < \infty$ and $\mu_{0}\ll m$. 
Then
\[
\min_{\eta \in \Pi_{\mu_{0},\mu_{1}}} \int d(x,z) \eta(dxdz)  = \min_{T_{\sharp}\mu_{0} = \mu_{1}} \int d(x,T(x)) \mu_{0}(dx),  
\]
\end{corollary}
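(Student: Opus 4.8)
The plan is to obtain the corollary immediately from Theorem \ref{T:mongeff} together with the elementary fact that every admissible transport map induces an admissible transport plan of the same cost. First I would recall that if $T : X \to X$ is $\mu_{0}$-measurable with $T_{\sharp}\mu_{0} = \mu_{1}$, then $\pi_{T} := (Id, T)_{\sharp}\mu_{0}$ lies in $\Pi(\mu_{0},\mu_{1})$ and $\int d(x,y)\,\pi_{T}(dxdy) = \int d(x,T(x))\,\mu_{0}(dx)$; taking the infimum over all such $T$ yields
\[
\inf_{\eta \in \Pi(\mu_{0},\mu_{1})} \int d(x,y)\,\eta(dxdy) \leq \inf_{T_{\sharp}\mu_{0} = \mu_{1}} \int d(x,T(x))\,\mu_{0}(dx).
\]
Since $W_{1}(\mu_{0},\mu_{1}) < \infty$, the left-hand functional is finite on at least one plan, and by linearity in $\eta$ and tightness of $\Pi(\mu_{0},\mu_{1})$ there is $\eta_{opt} \in \Pi_{opt}(\mu_{0},\mu_{1})$ realizing that infimum, so the left-hand side is in fact a minimum.

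Next I would feed this $\eta_{opt}$ into Theorem \ref{T:mongeff}, which (using $\mu_{0} \ll m$ and $N < \infty$) produces a Borel map $T$ with $T_{\sharp}\mu_{0} = \mu_{1}$ and
\[
\int_{X} d(x,T(x))\,\mu_{0}(dx) = \int_{X \times X} d(x,y)\,\eta_{opt}(dxdy) = \min_{\eta \in \Pi(\mu_{0},\mu_{1})} \int d(x,y)\,\eta(dxdy).
\]
This $T$ is admissible in the Monge problem, so $\inf_{T_{\sharp}\mu_{0} = \mu_{1}} \int d(x,T(x))\,\mu_{0}(dx)$ is bounded above by the Kantorovich minimum; combined with the reverse inequality from the first step, all three quantities coincide, the Monge infimum is attained by $T$, and the Kantorovich infimum by $\eta_{opt}$. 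This is precisely \eqref{E:K=M} rewritten with $\min$ in place of $\inf$.

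There is no genuine obstacle left at this point: all the substantive work has already been carried out — the structure theorem for $d$-cyclically monotone sets (Theorem \ref{T:1}), the disintegration of $m\llcorner_{\mathcal{T}}$ along transport rays, the absolute continuity of the conditional measures (Theorem \ref{T:a.c.}), and the gluing of the one-dimensional monotone rearrangements into a global map (Theorem \ref{T:mongeff}). The only point needing a word of care is to arrange the chain of inequalities so that its two extreme terms are identical, which forces equality everywhere and upgrades both infima to minima; this is exactly the three-line computation displayed just before the statement of the corollary.
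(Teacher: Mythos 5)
Your argument is correct and is essentially identical to the paper's: the corollary is deduced from Theorem \ref{T:mongeff} together with the elementary inequality $\inf_{\eta \in \Pi(\mu_{0},\mu_{1})} \int d\, d\eta \leq \inf_{T_{\sharp}\mu_{0}=\mu_{1}} \int d(x,T(x))\,\mu_{0}(dx)$, which forces equality and upgrades both infima to attained minima. This matches the three-line chain displayed before \eqref{E:K=M} and the remark following Theorem \ref{T:mongeff}.
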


\section*{Appendix: Estimate on the one dimensional density}

Here we include a result that is not strictly necessary in the proof of Theorem \ref{T:mongeff} and Corollary \ref{C:M=K}, 
but it will be used in a future publication to extend the results of \cite{cav:decomposition} 
to the case of $\RCD(K,N)$ and therefore to remove the non-branching assumption.

Using the curvature property $\RCD(K,N)$ verified by $(X,d,m)$ and the estimate \eqref{E:mcp}, we can prove regularity property for the density of 
$g(y,\cdot)^{-1}_{\sharp} m_{y}$ with respect to $\mathcal{L}^{1}$.
So we introduce the function $h : \mathcal{S} \times \R \to [0,\infty)$ so that
\[
m= g_{\sharp} \left( h\, q \otimes \mathcal{L}^{1} \right).
\]
We will prove some estimate for the map $t \mapsto h(y,t)$ for $q$-a.e. $y \in \mathcal{S}$. Again the estimates proved 
here are obtained assuming $K > 0$, anyway analogous calculations hold for any $K \in \R$ after suitable modifications.

Each ray $R(y)$ for $q$-a.e. $y \in \mathcal{S}$ is invariant 
for the evolution for compact subsets of the transport set $\mathcal{T}$, introduced in Section \ref{S:regularity}.  
Then, using standard arguments, estimate \eqref{E:mcp} can be localized at the level of the density $h$: 
for each compact set $A \subset \mathcal{T}$ 
\begin{align*}
\int_{P_{2}(g^{-1}(A_{t}) )} & h(y,s) \mathcal{L}^{1}(ds)  \crcr
 \geq (1-t) & \left( \inf_{\tau \in P_{2}(g^{-1}(A))}
\frac{\sin( (1-t) |\tau - \sigma|  \sqrt{K/(N-1)} )   }{\sin( |\tau - \sigma|  \sqrt{K/(N-1)} )}  \right)^{N-1}
\int_{P_{2}(g^{-1}(A))} h(y,s) \mathcal{L}^{1}(ds),
\end{align*}
for $q$-a.e. $y \in \mathcal{S}$ such that $g(y,\sigma) \in \mathcal{T}$.

Then using change of variable, one can obtain that for $q$-a.e. $y \in \mathcal{S}$:
\[
h(y,s+|s-\sigma| t ) \geq \left(
\frac{\sin( (1-t) |s - \sigma|  \sqrt{K/(N-1)} )   }{\sin( |s - \sigma|  \sqrt{K/(N-1)} )}  \right)^{N-1}
 h(y,s),
\]
for $\mathcal{L}^{1}$-a.e.  $s \in P_{2}(g^{-1}(R(y)))$ and $\sigma \in \R$ such that $s + |\sigma -s| \in P_{2}(g^{-1}(R(y)))$.
Then it can be rewritten in the following way: 
\[
h(y, \tau ) \geq \left(
\frac{\sin(  ( \sigma - \tau )  \sqrt{K/(N-1)} )   }{\sin( ( \sigma - s )  \sqrt{K/(N-1)} )}  \right)^{N-1} h(y,s),
\]
for $\mathcal{L}^{1}$-a.e. $s \leq \tau \leq  \sigma$ such that $g(y,s), g(y,\tau), g(y,\sigma) \in \mathcal{T}$. 

Since evolution can be also defined backwardly, we have proved the next

\begin{proposition}\label{P:densityestimates}
For $q$-a.e. $y \in \mathcal{S}$ it holds: 
\[
\left( \frac{\sin(  ( \sigma_{+} - \tau )  \sqrt{K/(N-1)} )   }{\sin( ( \sigma_{+} - s )  \sqrt{K/(N-1)} )}  \right)^{N-1} 
\leq \frac{h(y, \tau )} {h(y,s)} 
\leq  \left( \frac{\sin(  (  \tau - \sigma_{-} )  \sqrt{K/(N-1)} )   }{\sin( (s - \sigma_{-}  )  \sqrt{K/(N-1)} )}  \right)^{N-1},
\]
for $\sigma_{-} < s \leq \tau < \sigma _{+}$ such that their image via $g(y,\cdot)$ is contained in $R(y)$.
\end{proposition}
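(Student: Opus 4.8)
The plan is to transfer the global volume-contraction estimate \eqref{E:mcp} into a pointwise statement about the one-dimensional density $h(y,\cdot)$, along the lines already sketched above, and then to iterate the argument in the reverse direction to close the two-sided bound.

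First I would fix a compact set $A \subset \mathcal{T}$ and a parameter $\delta$ so that $A = A_\delta$ (possible after the localization performed in Section \ref{S:regularity}), and use that the evolution $t \mapsto A_t$ leaves each ray $R(y)$ invariant, acting on it as the linear contraction toward the point $g(y,\sigma)$ with $\sigma = \tau(y,\delta)$. Writing $m = g_\sharp(h\, q\otimes\mathcal{L}^1)$ and invoking the disintegration \eqref{E:disint}, the inequality \eqref{E:mcp} combined with Fubini--Tonelli gives, for $q$-a.e.\ $y$,
\begin{align*}
\int_{P_2(g^{-1}(A_t))} h(y,s)\,\mathcal{L}^1(ds) \geq{}& (1-t)\Big(\inf_{\tau\in P_2(g^{-1}(A))}\frac{\sin((1-t)|\tau-\sigma|\sqrt{K/(N-1)})}{\sin(|\tau-\sigma|\sqrt{K/(N-1)})}\Big)^{N-1}\\
{}&{}\times\int_{P_2(g^{-1}(A))} h(y,s)\,\mathcal{L}^1(ds).
\end{align*}
The exceptional $q$-null set here depends on $A$ and on $t$; to obtain a single good set of rays I would run the inequality only over a countable family of compact sets $A$ that is dense for the Hausdorff topology (available by the $\sigma$-compactness and separability of $\mathcal{T}$) and over a countable dense set of times $t$.

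Next, letting $A$ shrink to a Lebesgue density point of $P_2(g^{-1}(R(y)))$ and applying the Lebesgue differentiation theorem on the line, the integral inequality localizes to
\[
h(y,s+|s-\sigma|t) \geq \Big(\frac{\sin((1-t)|s-\sigma|\sqrt{K/(N-1)})}{\sin(|s-\sigma|\sqrt{K/(N-1)})}\Big)^{N-1} h(y,s)
\]
for $\mathcal{L}^1$-a.e.\ $s$ and all admissible $t$ and $\sigma$; since the change of variables $t \mapsto \tau := s+|s-\sigma|t$ is smooth and strictly monotone it preserves null sets, and restricting to $s \leq \tau \leq \sigma$ this reads
\[
h(y,\tau) \geq \Big(\frac{\sin((\sigma-\tau)\sqrt{K/(N-1)})}{\sin((\sigma-s)\sqrt{K/(N-1)})}\Big)^{N-1} h(y,s),
\]
which, with $\sigma = \sigma_+$, is the lower bound in the statement. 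For the upper bound I would repeat the same argument with the backward evolution (contracting each ray toward a point preceding $s$, which is legitimate since the construction of Section \ref{S:regularity} does not privilege an orientation of the rays); this yields $h(y,s) \geq \big(\sin((s-\sigma_-)\sqrt{K/(N-1)})/\sin((\tau-\sigma_-)\sqrt{K/(N-1)})\big)^{N-1} h(y,\tau)$ for $\sigma_- \leq s \leq \tau$, i.e.\ precisely the required upper bound on $h(y,\tau)/h(y,s)$.

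The main obstacle is the localization step: passing from the family \eqref{E:mcp} of integral estimates, each valid only up to a $q$-null set depending on the test set $A$ and on $t$, to an inequality holding pointwise in $s$ for a single full-$q$-measure set of rays $y$ and for all parameters simultaneously. This requires combining a countable exhaustion of admissible sets and times with a one-dimensional Lebesgue differentiation argument carried out along each individual ray, together with the elementary observation that the smooth change of variables $t \mapsto \tau$ does not reintroduce exceptional sets. Once this is in place, the remainder is routine manipulation of the trigonometric contraction coefficients $\sin(a\sqrt{K/(N-1)})/\sin(b\sqrt{K/(N-1)})$.
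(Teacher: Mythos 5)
Your proposal follows essentially the same route as the paper: localize the contraction estimate \eqref{E:mcp} to an integral inequality for $h(y,\cdot)$ along $q$-a.e.\ ray via the disintegration and Fubini--Tonelli, pass to a pointwise inequality by Lebesgue differentiation, perform the smooth change of variables $t\mapsto\tau=s+|s-\sigma|t$, and obtain the reverse bound from the backward evolution. The paper compresses the localization and the management of the exceptional null sets into the phrase ``standard arguments,'' so your explicit countable exhaustion of test sets and times is a correct (and welcome) filling-in of the same argument rather than a different one.
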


\end{document}